\newtheorem{thm}{Theorem}[section]
\newtheorem{lem}[thm]{Lemma}
\newtheorem{cor}[thm]{Corollary}
\newtheorem{conj}[thm]{Conjecture}
\newtheorem{defi}[thm]{Definition}
\title{Twins almost prime under a Elliott-Halberstam's conjecture}
\author{Nathalie Debouzy}
\begin{document}
\maketitle


\begin{abstract}
We improve Bombieri's asymptotic sieve to localise the variables. As a consequence, we prove, under a Elliott-Halberstam conjecture, that there exists an infinity of twins almost prime. Those are prime numbers $p$ such that for all $\varepsilon>0$, $p-2$ is either a prime number or can be written as $p_1 p_2$ where $p_1$ and $p_2$ are prime and $p_1<X^{\varepsilon}$, and we give the explicit asymptotic.
\end{abstract}


\def\sump{\mathop{\sum\mkern-3mu\hbox to 0pt{\raise3pt\hbox{${}^\prime$}\hss}\mkern3mu}}
\def\prode{\mathop{\prod\mkern-3.5mu\hbox to 0pt{\raise3pt\hbox{${}^*$}\hss}\mkern3mu}}
\def\sume{\mathop{\sum\mkern-3mu\hbox to 0pt{\raise3pt\hbox{${}^*$}\hss}\mkern3mu}} 
\def\sumun{\mathop{\sum\mkern-3mu\hbox to 0pt{\raise3pt\hbox{${}^1$}\hss}\mkern3mu}} 

\section*{Introduction}

The twin prime conjecture states that there are infinitely many primes $p$ such that $p + 2$ is also a prime, it has been one of the great open questions in number theory for many years.
Our result as many others in this field requires another conjecture about the distribution of prime numbers in arithmetic progressions. Peter D. T. A. Elliott and Heini Halberstam stated this kind of conjecture in 1968, in \cite{Ell}. The reader may also refer to \cite{Friedlander-Granville*92} and \cite{Friedlander-Granville*89}.

\begin{conj}
Given any $\delta>0$ and any $\theta$, there exists a constant $A_0(\delta, \theta)$ such that
\begin{equation}\label{introEH}
\sum_{\substack{d\le X^{1-\delta} \\ (d, 2) =1}}\max_{y\le X}\left| \psi(y;d,2) 
-\frac{y}{\varphi(d)}\right| \le A_0(\delta, \theta)\frac{X}{(6\log X)^{\theta}}.
\end{equation}
\end{conj}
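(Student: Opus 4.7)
The final displayed statement is the Elliott--Halberstam conjecture itself, which is a famously open assertion strengthening the Bombieri--Vinogradov theorem. Any ``proof proposal'' is therefore speculative; I can only sketch the natural line of attack and be honest about where it halts. The Bombieri--Vinogradov theorem already delivers a bound of the same quality, with an arbitrary negative power of $\log X$ on the right, provided one restricts the modulus range to $d\le X^{1/2-\delta}$. The plan is to push the exponent $1/2$ up to $1$, uniformly in the residue class (here fixed as $a=2$).

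The first step is the standard decomposition of $\Lambda$ via Heath-Brown's identity (or Vaughan's identity), which reduces the left-hand side of \eqref{introEH} to a collection of Type I and Type II bilinear sums. Applying Cauchy--Schwarz and the large sieve inequality to these sums yields Bombieri--Vinogradov at level $1/2$. To go beyond, one would invoke Linnik's dispersion method to treat the Type II sums: expanding the square opens up an inner sum over $d$ to which Poisson summation can be applied, producing incomplete Kloosterman sums twisted by the residue $a=2$. The second step is to control those sums nontrivially on average, which for moduli up to $X^{1-\delta}$ requires the Deshouillers--Iwaniec estimates obtained from Kuznetsov's formula and the spectral theory of automorphic forms. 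In the most favourable scenarios, such as well-factorable moduli, the Bombieri--Fouvry--Friedlander--Iwaniec machinery does produce level-of-distribution results with exponent strictly above $1/2$, and the hope would be to iterate and combine these inputs.

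The main obstacle is that these methods have never been pushed to level $1-\delta$ for arbitrary moduli: the available power-saving at large $d$ relies essentially on factoring $d=d_1 d_2$ with both factors in a friendly range, and for $d$ close to $X$ this factorisation is unavailable for almost all $d$. Equivalently, the spectral input gives us averaged Kloosterman estimates that are strong enough to reach $4/7$ or $7/12$ for fixed $a$, but the barrier at $1$ seems to require a fundamentally new idea, comparable in depth to a proof of a quasi-Riemann hypothesis for every Dirichlet $L$-function with very strong uniformity in the modulus. Consequently the best one can currently do is to take \eqref{introEH} as an assumption, which is precisely what the present paper does, and to restrict attention to consequences that are insensitive to the dependence of $A_0(\delta,\theta)$ on its parameters.
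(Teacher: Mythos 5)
You have correctly identified that this statement is the Elliott--Halberstam-type hypothesis which the paper \emph{assumes} rather than proves: it is stated as a conjecture, noted to be known only for $\delta>1/2$ (Bombieri--Vinogradov) and false at $\delta=0$, and everything downstream is conditional on it. Your survey of why current technology (Heath-Brown/Vaughan decompositions, dispersion, Deshouillers--Iwaniec spectral estimates, well-factorable moduli) stalls short of level $1-\delta$ is accurate and consistent with the paper's treatment, so there is nothing to fault and nothing in the paper to compare it against.
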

This conjecture was proven for any $\delta> 1/2$ by Bombieri and Vinogradov and it is known that the conjecture fails at the endpoint $\delta=0$.

Under this conjecture, we prove the following theorem.
\begin{thm}
Under the conjecture (\ref{introEH}), given any $\beta\ge 0$ and $\gamma>\beta$, 
there exists $X_0$ such that for all $X\ge X_0$,
\begin{align}\label{reslt1}
\notag \sum_{n\le X} \Lambda(n)\Lambda(n+2) +
\frac{1}{\gamma-\beta} \sum_{n \le X}\Lambda(n+2)
\sum_{\substack{ d_1 d_2 =n \\ n^{\beta} \le d_1 \le n^{\gamma}}} \frac{\Lambda(d_1)\Lambda(d_2)}{\log n} \\
= 2 \mathfrak{S}_2 X\left( 1+o(1)\right)
\end{align}
where $\mathfrak{S}_2= \prod_{p\ge 3} \frac{p(p-2)}{(p-1)^2}$ is the twin prime numbers constant.
\end{thm}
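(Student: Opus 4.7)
The plan is to combine the classical identity
$$\Lambda_2(n) = \Lambda(n)\log n + \sum_{d_1 d_2 = n} \Lambda(d_1) \Lambda(d_2),\qquad \Lambda_2 := \mu * \log^2,$$
with the Bombieri asymptotic sieve under the Elliott--Halberstam conjecture \eqref{introEH}, together with the localization refinement advertised in the abstract. Dividing by $\log n$, multiplying by $\Lambda(n+2)$, and summing over $n \le X$ gives
$$\sum_{n \le X}\Lambda(n)\Lambda(n+2) + \sum_{n \le X}\Lambda(n+2) \sum_{d_1 d_2 = n}\frac{\Lambda(d_1)\Lambda(d_2)}{\log n} = \sum_{n \le X}\frac{\Lambda(n+2)\Lambda_2(n)}{\log n},$$
so \eqref{reslt1} reduces to two claims: (a) the right-hand side equals $2\mathfrak{S}_2 X(1+o(1))$ under EH, and (b) the middle convolution sum, when restricted to $n^\beta \le d_1 \le n^\gamma$ and rescaled by $(\gamma-\beta)^{-1}$, reproduces the full convolution sum up to $o(X)$.

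For (a), I would run the classical Bombieri asymptotic sieve at $k=2$. Expanding $\Lambda_2(n) = \sum_{d \mid n}\mu(d)\log^2(n/d)$ and interchanging summations, the inner count becomes $\psi(X;d,2) = X/\varphi(d) + \text{error}$, with \eqref{introEH} bounding the total error for $d \le X^{1-\delta}$; the tail $d > X^{1-\delta}$ is harmless because $\log^2(n/d)/\log n$ is small there. Assembling main terms produces the twin-prime singular series $2\mathfrak{S}_2$, while the $\log X$ factor carried by the weight $\log^2(n/d)$ cancels the $1/\log n$, leaving $2\mathfrak{S}_2 X(1+o(1))$.

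For (b), which is the paper's main technical contribution, I would switch the order of summation to place $d_1 \in [X^\beta, X^\gamma]$ as the outer variable. The inner sum over $d_2 \le X/d_1$ is then a shifted correlation of the form $\sum_{d_2}\Lambda(d_2)\Lambda(d_1 d_2 + 2)/\log(d_1 d_2)$, which under EH admits the asymptotic $2\mathfrak{S}_2(d_1)\cdot X/(d_1 \log X)(1+o(1))$, where $\mathfrak{S}_2(d_1)$ is the singular series twisted by the local factor at $d_1$. Weighting by $\Lambda(d_1)/\log n$ and summing over primes $d_1 \in [X^\beta, X^\gamma]$ via the prime number theorem contributes a factor $(\gamma-\beta)\log X$, which cancels both $1/\log n$ and the rescaling $1/(\gamma-\beta)$. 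Running the analogous calculation on the unrestricted sum with $d_1 \in [1,X]$ gives $\log X$ in place of $(\gamma-\beta)\log X$, so the scaled and unscaled sides match.

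The hardest part will be obtaining uniform control of the EH errors across all $\log X$ dyadic blocks of $d_1 \in [X^\beta, X^\gamma]$ arising in (b). Since EH is invoked at its endpoint level $X^{1-\delta}$, a naive per-$d_1$ application yields an acceptable error only in total, not block by block; this is where the localization improvement of Bombieri's sieve --- the paper's main innovation --- is expected to intervene, via a quantitative bilinear refinement. Remaining technicalities (prime-power contributions to $\Lambda$, the parity condition $(d,2)=1$ in EH, and boundary corrections distinguishing the restriction $d_1 \in [n^\beta, n^\gamma]$ from $d_1 \in [X^\beta, X^\gamma]$) are standard and can be absorbed into the $o(X)$ error.
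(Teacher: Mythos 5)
Your reduction in part (a) is fine: the identity $\Lambda_2=\Lambda\log+\Lambda*\Lambda$ and the Bombieri asymptotic sieve at level $2$ under \eqref{introEH} do give the \emph{unrestricted} statement, which is exactly the paper's ``non localized'' special case ($\nu_1=\nu_2=1$ of Theorem \ref{thm3bis}). The fatal gap is in part (b). After switching the order of summation you assert that the inner sum $\sum_{d_2}\Lambda(d_2)\Lambda(d_1d_2+2)$ ``under EH admits the asymptotic $2\mathfrak{S}_2(d_1)\,X/(d_1\log X)(1+o(1))$.'' This is a binary correlation of primes --- counting primes $p$ with $d_1p+2$ prime --- and the Elliott--Halberstam conjecture gives no asymptotic for such sums; only sieve \emph{upper bounds} of the correct order of magnitude are available (the paper uses precisely such a bound, Lemma \ref{Halbrich}, and only to control error terms). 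If one could evaluate these correlations asymptotically, the localization would be trivial and so, essentially, would the whole problem; the entire point of the asymptotic-sieve framework is that only the combination $\Lambda\log^{k-1}+\,$convolutions, i.e.\ $\mu*\log^k$, is accessible, never the individual binary pieces. So your ``hardest part'' is not a matter of uniformity of EH errors over dyadic blocks; the per-$d_1$ asymptotic you need simply does not exist under EH.

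The mechanism the paper actually uses is quite different and is worth internalizing. The generalized Selberg identity (Lemma \ref{Diamond}) shows that for every pair $(\nu_1,\nu_2)$ the combination $\Lambda^{(\nu_1+\nu_2)}+\Lambda^{(\nu_1)}\star\Lambda^{(\nu_2)}$ is a short linear combination of terms $\mu_\ell\star L(\pmb{\ell})$, hence accessible to the sieve with remainder controlled by \eqref{introEH}. Theorem \ref{thm3bis} then evaluates the convolution sum weighted by $f_{\nu_1,\nu_2}(\log d_1/\log n)$, where $f_{\nu_1,\nu_2}(x)=(\nu_1+\nu_2-1)\binom{\nu_1+\nu_2-2}{\nu_1-1}x^{\nu_1-1}(1-x)^{\nu_2-1}$ is a normalized Bernstein basis polynomial. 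Localization to $n^\beta\le d_1\le n^\gamma$ is obtained \emph{not} by evaluating the restricted sum directly, but by approximating (from outside, using positivity) the indicator $1_{[\beta,\gamma]}$ by linear combinations $\sum_{\nu_1}\alpha_{\nu_1,\nu'+1}f_{\nu_1,\nu'-\nu_1+2}$ of these accessible weights, via the uniform convergence of Bernstein polynomials and the auxiliary ``plateau'' and ``peak'' functions. The factor $1/(\gamma-\beta)$ arises because $\int_0^1 f_{\nu_1,\nu_2}=1$ while the plateau has mass $\gamma-\beta+\mathcal{O}(\epsilon)$. Your proposal is missing this entire idea, which is the theorem's actual content.
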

In particular, if we take $\beta=0$ we obtain the following result.

\begin{cor}
Under the conjecture (\ref{introEH}), for any $\gamma$ where $0<\gamma<1$, there exists $X_0$ such that for all $X\ge X_0$,
\begin{align*}
\sum_{n\le X} \Lambda(n)\Lambda(n+2) +\frac{1}{\gamma} \sum_{n \le X}\Lambda(n+2)\sum_{\substack{ d_1 d_2 =n \\ d_1 \le n^{\gamma}}} \frac{\Lambda(d_1)\Lambda(d_2)}{\log n} \\
= 2 \mathfrak{S}_2 X\left( 1+o(1)\right).
\end{align*}
\end{cor}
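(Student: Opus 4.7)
The corollary is the special case $\beta = 0$ of the preceding Theorem, so the plan is simply to verify that setting $\beta = 0$ in (\ref{reslt1}) reproduces the stated identity term by term. No new sieve or analytic input is required beyond what the Theorem already provides, and the Elliott--Halberstam conjecture is invoked there exactly as needed here.

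Concretely, with $\beta = 0$ the admissibility hypothesis $\beta \ge 0$ is met, and the condition $\gamma > \beta$ in the Theorem becomes $\gamma > 0$; the additional upper bound $\gamma < 1$ in the Corollary is the natural range in which the localisation $d_1 \le n^\gamma$ is genuinely restrictive and in which the underlying Elliott--Halberstam input remains applicable. The prefactor $1/(\gamma - \beta)$ then becomes $1/\gamma$, matching the coefficient appearing in the Corollary. The localisation $n^\beta \le d_1 \le n^\gamma$ reduces to $1 \le d_1 \le n^\gamma$, and since $\Lambda(1) = 0$ this is equivalent, on the support of $\Lambda(d_1)\Lambda(d_2)$, to the plain bound $d_1 \le n^\gamma$ written in the Corollary. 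The right-hand side $2 \mathfrak{S}_2 X (1 + o(1))$ is untouched by the substitution.

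Since every ingredient of (\ref{reslt1}) transforms into the corresponding ingredient of the Corollary, the asymptotic follows directly for any $\gamma \in (0, 1)$. The main obstacle, namely the proof of the Theorem itself via the improved Bombieri asymptotic sieve with localised variables, has already been handled; no further work is needed for this corollary.
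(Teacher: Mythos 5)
Your proposal is correct and coincides with the paper's own (implicit) argument: the Corollary is obtained by specialising Theorem~(\ref{reslt1}) to $\beta=0$, whereupon $1/(\gamma-\beta)$ becomes $1/\gamma$ and the constraint $n^{\beta}\le d_1$ becomes vacuous. No further justification is needed.
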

The sums implied in those two previous theorems will take twin prime numbers and numbers $n$ such that $n=p_1 p_2$ where $p_1$ and $p_2$ are primes and $p_1$ is relatively small ($\le n^{\gamma}$) and $n+2$ is a prime.
Furthermore, as we suppose that $\sum_{n\le X} \Lambda(n)\Lambda(n+2) \sim \mathfrak{S}_2 X $ (as a consequence of Hardy-Littlewood conjecture), we could deduce that the two terms implied in each sum are the same weight.

These sums include primes powers which contribute to a neglibible part. We prove the same theorem with $\Lambda_1(n) = \log n$ if $n$ is a prime and $0$ otherwise in \ref{sanspuis}.


One of the best previous results on twin prime numbers was obtained by Chen \cite{Chen} in 1973, stating that he number of Chen's numbers (i.e. twin numbers such that $p$ is a prime and $p+2$ is also a prime or a product of two primes) lower of equal to $X$ is greater than $0.335 \Pi(X)$, where $\Pi(X) =2\mathfrak{S}_2 X/\log^2 X$, and Jie Wu \cite{Wu*08} and \cite{Wu*04} gave the best improved constant with $1.104 \Pi(X)$. 

Concerning our result, an expert in the field remarked (in private conversations) that Chen's method would give a similar result with $\gamma=1/10$ under a Elliott-Halberstam conjecture and he might even be able to go slightly lower but the abitrary $\gamma$ that we have in our work is definitely stronger. 
Indeed, the constants implied in our $o(1)$ are explicit and depend on $\delta$ of (\ref{introEH}). 

Finally, we note that a similar result could be obtained for $n$ and $n+k$ prime, and the proof is even more general as it can be used with $\Lambda(n)$ and $f(n)$ (instead of $\Lambda(n+2)$) where $f(n)$ is a function verifying a (quite long) list of properties. 

\vspace{1 cm}
All this work was part of the PhD thesis that I defended under the guidance of O. Ramaré, whom I greatly thank for his supervising work but also for his careful review of this article. 

The process starts with a generalized version of Enrico Bombieri's asymptotic sieve as we approximate
\begin{equation*}
\sum_{n\le X} \Lambda^{(\nu_1+\nu_2)}(n) f(n) + \sum_{n\le X} \Lambda^{(\nu_1)}\star \Lambda^{(\nu_2)}(n) f(n)
\end{equation*}
where $f$ is a function, $\nu_1$ and $\nu_2$ are two integers and $\Lambda^{(k)} =\frac{\Lambda(\log)^{k -1}}{(k-1)!}$, with 
\begin{equation*}
\sum_{n\le X} \Lambda^{(\nu_1+\nu_2)}(n) f_0(n) + \sum_{n\le X} \Lambda^{(\nu_1)}\star \Lambda^{(\nu_2)}(n) f_0(n)
\end{equation*}
where $f_0$ is a simpler function, considered as a "model" for $f$.
The functions $f$ and $f_0$ must satisfy precise properties that will be enumerated in \ref{hypo}. 
The proof will proceed with a level of distribution $X^{1-\delta}$ and we shall let $\delta$ tend to zero at the very end of the proof. In between, we shall use a preliminary sieving up to $z = X^\delta$ and use $D_0=X^{1-\delta}$.


The first part consists in generalizing sizeably Ramar\'e's work \cite{Ram}, which we correct (in minor points) and precise.
We then take the particular case $f(n) = \Lambda(n+2)$ (and $f_0(n) = 1$), under the conjecture (\ref{introEH}), and with the help of Bernstein polynomials towards a indicator-like function, our goal is reached in the second part of this article.



\section{A more flexible Bombieri Asymptotic Sieve}

\subsection{Context}
In many problems we need to calculate sums over integers which are relatively prime to a specific number. The $^*$ on sums and products means that they are taking only numbers which are relatively prime to a number  $\mathfrak{f}$. The sums and product with a $'$ are the case where $\mathfrak{f} =\prod_{p\le z} p$, hence they are restricted to integers that don't have any prime factor lower than $z$.

Let $f$ be a \emph{positive} function with some regularity:
\begin{equation}\label{approx2}
\sume_{n\le y/d}f(dn) = \sigma(d)F(y)+ r_d(f,y)
\end{equation}
where $\sigma$ is a multiplicative function, $F$ any function and $r_d$ is an error term.

We want to approach $f$ with a \emph{positive} but "simpler" function $f_0$ which would have the same kind of regularity.
\begin{equation*}
\sume_{n\le y/d}f_0(dn) = \sigma_0(d)F(y)+ r_d(f_0,y).
\end{equation*}
To manage the error term, we use the sum
\begin{equation}
R(f, D, r) = \sume_{d\le D} \tau_r(d) \max_{y\le X} \vert r_d(f,y)\vert
\end{equation}
where $\tau_r$ is the $r$-fold divisor function, and make hypothesis on the functions $f$ and $f_0$, see $(H_4)$ and $(H_9)$ in the next part. 
In the case of twin prime numbers, these hypothesis are validated with a Elliott-Halberstam-type conjecture, see $(H_{10})$.

To handle the difference from $f$ to $f_0$, we shall use
\begin{equation}
\bar{f} = f - \frac{V_{\sigma}(z)}{V_{\sigma_0}(z)} f_0
\end{equation}
where $V_{\sigma}(z) = \prod_{p\le z}(1 - \sigma(p))$. And we will use the following notations:
\begin{equation*}
r'_{d,z}(w,y)= \sump_{n\le y/d} w(n) \bar{f}(dn)
\end{equation*}
where $w$ is considered as a "weight" function and
\begin{equation*}
R'_z(w, \bar{f}, D, r) = \sump_{d\le D} \tau_r(d)\max_{y\le X}\vert r'_{d,z}(w,y)\vert.
\end{equation*}

At last, $f = \mathcal{O}^*(g)$ means $\vert f \vert \le g$. 


\subsection{Hypothesis and Theorem}\label{hypo}

Let $\nu_1$ et $\nu_2$ be two positive integers such that $0 <\nu_1\le \nu_2$ in the hypothesis below. We point out that in the first part of our work we kept the more general $\min(\nu_1,\nu_2)$ and $\max(\nu_1,\nu_2)$ (which could be easily replaced with $\nu_1$ and $\nu_2$ if the order is set). The number $\delta$ is our main parameter, it is meant to be small, and $z=X^{\delta}$. Our hypothesis are the following.

\begin{multline}\label{H1}\tag{$H_1$}
\left\lbrace
\begin{array}{cccc}
 \displaystyle \vert F(y)\vert \le \hat{F}(X) \qquad (y\le X),\\
 \displaystyle 0\le \sigma(p),\sigma_0(p)\le 1, \\
 \displaystyle \forall n \le X, f_0(n) \le B_0 \hat{F}(X)/X,\\
 \displaystyle \prode_{v\le p\le u} (1- \sigma(p))^{-1} + \prode_{v\le p\le u} (1- \sigma_0(p))^{-1} \le c \log u/\log v \qquad (2 \le v\le u)
\end{array}\right.
\end{multline}
where $c$ is a constant $\ge 2$. Also,
\begin{equation}\label{H2}\tag{$H_2$}
V_{\sigma_0}(z) \le \frac{c}{\log z}
\end{equation}
where we recall that $V_{\sigma}(z) = \prod_{p\le z}(1 - \sigma(p))$.

One of the fundamental hypothesis we make concerns positivity
\begin{equation}\label{H3}\tag{$H_3$}
f \ge 0 \qquad \text{et} \qquad f_0 \ge 0.
\end{equation}
Our hypothesis on remainder terms is:
\begin{equation}\label{H4}\tag{$H_4$}
R(f, D_0, 2\nu_2) + \frac{V_{\sigma}(z)}{V_{\sigma_0}(z)} R(f_0, D_0, 2\nu_2) \le A\hat{F}(X)/\log X
\end{equation}
where $R(f, D, r) = \sum_{d\le D} \tau_r(d)\max_{y\le X} \vert r_d(f,y)\vert$.
And (see \cite{Ram}) this hypothesis can be replaced with:
\begin{align}\label{H9}\tag{$H_9$}
\left\lbrace
\begin{array}{cc}
\displaystyle
\forall d \le D_0, \max_{y\le X}\left(\vert r_d(f,y)\vert, \vert r_d(f_0,y)\vert\right) \le C\hat{F}(X)/d\\
 \displaystyle R(f, D_0, 1) + \frac{V_{\sigma}(z)}{V_{\sigma_0}(z)} R(f_0, D_0, 1) \le A' \hat{F}(X)/\log^2 X
\end{array}\right.
\end{align}
where $A$ in ($H_4$) is then given by $A = \sqrt{C(2\log X)^{4\nu_2^2}A'} $.

Large values of $\sigma_0$ are controlled by:
\begin{equation}\label{H5}\tag{$H_5$}
\prode_{z\le p \le X} \sum_{a\ge 0} \sigma_0(p^a) \le c\frac{\log X}{\log z}.
\end{equation}

And  large values of $\sigma$ are controlled by our choice of $\Delta$ where
\begin{equation*}
\Delta = \sum_{\substack{p^a \le X\\p\ge z}}\vert \sigma(p^a) - \sigma_0(p^a)\vert.
\end{equation*}

When the preliminary sieve is removed, we will need the following hypothesis:
\begin{equation}\label{H6}\tag{$H_6$}
\max_{p\le z}(\sigma(p^h)p^h, \sigma_0(p^h)p^h) \le c,
\end{equation}
i.e. the values taken by $\sigma$ and $\sigma_0$ for prime number powers are relatively stable. We also suppose that:
\begin{equation}\label{H7}\tag{$H_7$}
V_{\sigma_0}(X^{1/4}) \le \frac{c}{\log X}.
\end{equation}
Finally, we need the following bounds:
\begin{equation}\label{H8}\tag{$H_8$}
\hat{F}(X) \ge z \sqrt{X} \delta^{-\nu_2-1}, \qquad \max(\Vert f\Vert_{\infty}, \Vert f_0\Vert_{\infty}) \le B.
\end{equation}

\begin{thm}\label{thm2}
Assuming the hypothesis above,
\begin{multline*}
\sum_{n\le X} \Lambda^{(\nu_1+\nu_2)}(n) f(n) + \sum_{n\le X} \Lambda^{(\nu_1)}\star \Lambda^{(\nu_2)}(n) f(n)\\
= \frac{V_{\sigma}(z)}{V_{\sigma_0}(z)}\left(\sum_{n\le X} \Lambda^{(\nu_1+\nu_2)}(n) f_0(n) + \sum_{n\le X} \Lambda^{(\nu_1)}\star \Lambda^{(\nu_2)}(n) f_0(n)\right)\\
 + (\rho +\theta) \frac{V_{\sigma}(z)}{V_{\sigma_0}(z)} \hat{F}(X) \frac{(\log X)^{\nu_1+\nu_2 - 1}}{(\nu_1+\nu_2 -1)!}
\end{multline*}
where
\begin{equation*}
\vert \rho \vert \le 2 \times (149\nu_2)^{\nu_1+\nu_2} 
 \times \biggl(Ac^2+\left( C_0(c) e \delta^{3\nu_2} +\Delta4^{\frac{1}{\delta}} \right) \left(\frac{c}{\delta}\right)^{2 \nu_2} \biggr)
\end{equation*}
and
\begin{multline*}
\vert\theta\vert \le 2\times(24\nu_2)^{\nu_2}B_0 \left(3\nu_2^2\log \frac{1}{\delta}\right)^{\nu_1+2\nu_2}\delta^{\nu_1} \\
+ 2\times\frac{B}{\log X} c^2(1+e^{c\Delta})\left(1+C_0(c)\right)(4\delta)^{\nu_2}\nu_2^{\nu_1} + A'.
\end{multline*}
\end{thm}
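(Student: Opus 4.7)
The plan is to follow the Bombieri--Ramaré asymptotic sieve architecture, extended to handle the pair $(\nu_1,\nu_2)$ simultaneously and modified to compare $f$ against a model $f_0$ rather than to isolate primes. The left-hand side
\[
\sum_{n\le X}\bigl(\Lambda^{(\nu_1+\nu_2)}+\Lambda^{(\nu_1)}\star\Lambda^{(\nu_2)}\bigr)(n)\,f(n)
\]
will be treated by expanding each $\Lambda^{(k)}$ through its Möbius expression as a divisor sum weighted by powers of $\log$, splitting the resulting divisor sums at the preliminary level $z=X^{\delta}$ on the one hand and at the level of distribution $D_0=X^{1-\delta}$ on the other, and then feeding \eqref{approx2} into the range $d\le D_0$.

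First, the preliminary sifting (using the primed sums $\sump$) restricts to integers coprime to $\mathfrak{f}=\prod_{p\le z}p$. Inserting $\sum f(dn)=\sigma(d)F(y)+r_d$ and reassembling the Möbius-weighted factors gives a main term proportional to $V_\sigma(z)$ multiplying an integral of $F$ against powers of $\log$, plus an arithmetic error controlled by $R(f,D_0,2\nu_2)$ via \eqref{H4}. Doing the same computation with $(f_0,\sigma_0)$ in place of $(f,\sigma)$ yields the analogous formula with $V_{\sigma_0}(z)$, so rescaling by $V_\sigma(z)/V_{\sigma_0}(z)$ makes the principal terms of $f$ and $f_0$ match and reduces the analysis to the difference
\[
\bar f \;=\; f - \frac{V_\sigma(z)}{V_{\sigma_0}(z)}\,f_0
\]
together with the integers killed by the preliminary sieve.

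Bounding the contribution of $\bar f$ produces the error $\rho$ and is the analytic core of the proof. Three sources of error appear: the arithmetic remainder $R'_z(w,\bar f,D_0,2\nu_2)$, bounded by \eqref{H4} applied to both $f$ and $f_0$, giving the $Ac^2$ piece; the discrepancy between $\sigma$ and $\sigma_0$ on primes above $z$, handled through $\Delta$ together with a combinatorial enumeration of square-free products of such primes, yielding the $\Delta\,4^{1/\delta}$ contribution; and the smooth error from replacing exact sums by integrals of $F$ against the $\log^{\nu_1+\nu_2-1}$ weight, which with \eqref{H5} and \eqref{H7} produces the $C_0(c)\,e\,\delta^{3\nu_2}$ piece. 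The combinatorial prefactor $(149\nu_2)^{\nu_1+\nu_2}$ tracks the cost of the Möbius expansions together with the nested convolution $\Lambda^{(\nu_1)}\star\Lambda^{(\nu_2)}$.

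Removing the preliminary sieve, i.e.\ passing from $\sump$ back to $\sum$, is the source of $\theta$. The $B_0$-term bounds the contribution of $f_0$ on integers with a prime factor below $z$, using \eqref{H1}, \eqref{H6}, \eqref{H7} and a fundamental-lemma-type estimate (responsible for the $\delta^{\nu_1}$ saving against the $\log(1/\delta)$ loss); the middle $B$-term is the analogous bound for $\bar f$, combining \eqref{H5}, \eqref{H6}, \eqref{H8} with the discrepancy parameter $\Delta$; and $A'$ is inherited directly from \eqref{H9}. I expect the main obstacle to be the uniform and fully explicit bookkeeping through all of these steps: every Möbius expansion, every exchange of the order of summation and every Cauchy--Schwarz introduces factorial-type constants in $\nu_1,\nu_2$ and $\delta$-powers that must be tracked tightly to reproduce the stated forms of $\rho$ and $\theta$, and in particular the delicate arithmetic $\Delta\,4^{1/\delta}$ term reflects exactly the combinatorial loss incurred when reassembling sifted sums over products of primes above $z$.
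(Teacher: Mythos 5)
You have the bird's-eye architecture right (preliminary sieve at $z$, comparison of $f$ with $f_0$ through $\bar f$, removal of the sieve at the end), but the proposal misses the two ingredients that actually drive the paper's proof and misattributes the origin of most of the error terms. First, the engine is not ``expanding each $\Lambda^{(k)}$ through its Möbius expression'': an individual $\Lambda^{(k)}$ has no usable expansion with Möbius factors of bounded convolution length. The whole point of taking the combination $\Lambda^{(\nu_1+\nu_2)}+\Lambda^{(\nu_1)}\star\Lambda^{(\nu_2)}$ is the desymmetrized Selberg--Diamond--Steinig identity (Lemma \ref{Diamond}), which shows that this particular sum equals $\sum_{\pmb{\ell}}a(\pmb{\ell})\mu_{\ell}\star L(\pmb{\ell})$ with the Möbius convolution length $\ell$ bounded by $\max(\nu_1,\nu_2)$; that is what keeps all divisor-function losses at $\tau_{\le\max(\nu_1,\nu_2)}$ and it appears nowhere in your sketch. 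Second, the proof does not split ``at $z$ and at $D_0$'': it introduces an intermediate truncation $T=X(D/X)^{\max(\nu_1,\nu_2)}$, writes the Möbius sum as $S_1$ (for $n\le T$) plus a tail dominated by $S_2$, and then optimizes $\log D=(1-\eta)\log X$ with $\eta=3\max(\nu_1,\nu_2)\,\delta\log(1/\delta)$. That optimization is exactly where the exponents in the statement come from: $C_0(c)\,e\,\delta^{3\nu_2}$ is the linear-sieve term $C_0(c)e^{-\log(D_0/D)/\log z}$ of Lemma \ref{lemme24} evaluated at this choice of $\eta$ (it is not a ``smooth error from replacing sums by integrals of $F$''), and $\delta^{\nu_1}(\log\frac{1}{\delta})^{\nu_1+2\nu_2}$ is $(\eta\max(\nu_1,\nu_2))^{\nu_1+\nu_2+\max(\nu_1,\nu_2)}\delta^{-2\max(\nu_1,\nu_2)}$ after the same substitution. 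Without $T$ and this choice of parameters you cannot reproduce the stated forms of $\rho$ and $\theta$.

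Your reading of $\theta$ is also essentially reversed. The $B_0$-term is $\tilde\rho$, i.e.\ the cost of the untreated tail $\sum_{\pmb{\ell}}a(\pmb{\ell})\frac{V_{\sigma}(z)}{V_{\sigma_0}(z)}S_2(f_0,T,\pmb{\ell})$ inside the \emph{sieved} sum; it has nothing to do with integers having a prime factor below $z$. The $B$-term is the contribution of integers with a prime factor $\le z$ when the preliminary sieve is removed, and its hardest case ($n=p^h m$ with $m$ a prime $\ge\sqrt X$) requires a second application of the fundamental lemma (Lemma \ref{lemme22}) with level $X^{1/4}$, which is where the factors $e^{c\Delta}$ and $1+C_0(c)$ come from; the $A'$ term enters through $R(f,D_0,1)$ in that same step via \eqref{H9}. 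As written, the proposal is a plausible narrative but the missing Selberg-type identity and the missing $T$-optimization are genuine gaps rather than bookkeeping to be filled in.
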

Notations were defined above and others can be found in \ref{autresnotations}.


\subsection{(Even more) generalized Selberg's identity}
Steinig and Diamond proved this identity below in the particular case $n_1=n_2=n$ and $n_1+n_2=2n$ in their lemma 5.2 in \cite{Diam}. For our work we need to desymmetrize it, obtaining the following result, which holds for any integers $n_1$ and $n_2$.
\begin{lem}\label{Diamond}
Let F be an invertible and infinitely differientiable function.
Here we classically denote $F^{(m)}$ the $m^{th}$ derivative of F for any positive integer $m$, and let $n_1$ and $n_2$, two positive integers. We have
\begin{multline*}
 \frac{1}{(n_1+n_2-1)!}\biggl( \frac{F'}{F}\biggr)^{(n_1+n_2-1)}+\frac{1}{(n_1-1)!}\biggl( \frac{F'}{F}\biggr)^{(n_1-1)}\frac{1}{(n_2-1)!}\biggl( \frac{F'}{F}\biggr)^{(n_2-1)}
\\
= \frac{P_{n_1,n_2}(F,F', ...., F^{(n_1+n_2)})}{F^{\max(n_1, n_2)}}
\end{multline*}
where $P_{n_1,n_2}$ is a polynomial with rational coefficients depending on $n_1$ and $n_2$ (independent of $F$).
\end{lem}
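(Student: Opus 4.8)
The plan is to prove the identity by induction on $n_1 + n_2$, using the classical Selberg-type recursion for logarithmic derivatives. First I would set $L = F'/F$ and recall that, writing $\Lambda^{(k)}$ for the arithmetic operator whose "analytic shadow" is $\tfrac{1}{(k-1)!}L^{(k-1)}$, the (standard, symmetric) Selberg identity asserts a recursion of the shape
\begin{equation*}
\frac{1}{(n-1)!}L^{(n-1)} = \frac{1}{(n-1)!}(\log F)^{(n-1)} \quad\text{combined bilinearly with lower-order terms,}
\end{equation*}
so that differentiating $L = F'/F$, equivalently $F' = FL$, repeatedly by the Leibniz rule expresses $F^{(m)}$ in terms of $F, L, L', \dots, L^{(m-1)}$. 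The key bookkeeping observation is that each $L^{(j)}$, when written over a common denominator coming from $L = F'/F$ and its derivatives, contributes exactly one extra power of $F$ in the denominator; hence $\tfrac{1}{(n_1+n_2-1)!}L^{(n_1+n_2-1)}$ has denominator $F^{n_1+n_2}$ after clearing, while the product $\tfrac{1}{(n_1-1)!}L^{(n_1-1)}\cdot\tfrac{1}{(n_2-1)!}L^{(n_2-1)}$ has denominator $F^{n_1}\cdot F^{n_2} = F^{n_1+n_2}$ as well.

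The heart of the argument is therefore to show that the \emph{sum} of these two terms has its numerator divisible by $F^{\min(n_1,n_2)}$, so that after cancellation only $F^{\max(n_1,n_2)}$ remains in the denominator — this is precisely the desymmetrized analogue of Diamond--Steinig's Lemma 5.2. I would proceed as follows. Step one: establish the case $n_1 = n_2 = 1$ directly, where the left side is $L' + L^2 = (F'/F)' + (F'/F)^2 = F''/F$, giving $P_{1,1} = F''$ and denominator $F^1 = F^{\max(1,1)}$. Step two: assume the identity holds for all pairs with smaller sum; write $n_1 \le n_2$ without loss of generality and use the Leibniz expansion
\begin{equation*}
L^{(n_1+n_2-1)} = (F'/F)^{(n_1+n_2-1)} = \sum_{j=0}^{n_1+n_2-1}\binom{n_1+n_2-1}{j}(F')^{(j)}\,(1/F)^{(n_1+n_2-1-j)},
\end{equation*}
together with the Faà di Bruno expansion of $(1/F)^{(k)}$ as a polynomial in $F, F', \dots, F^{(k)}$ divided by $F^{k+1}$. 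Step three: match this against the Leibniz expansion of the product $L^{(n_1-1)}L^{(n_2-1)}$ and invoke the induction hypothesis on the strictly smaller pairs that appear in the recursive bilinear Selberg relation; the factorials $\tfrac{1}{(n-1)!}$ are exactly what is needed to make the binomial coefficients telescope so that all terms with denominator more singular than $F^{\max(n_1,n_2)}$ cancel.

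The main obstacle — and the place where the desymmetrization genuinely costs something over the $n_1 = n_2$ case — is controlling the combinatorics of the cross terms: in the symmetric case many cancellations are forced by symmetry of the two factors, whereas here one must track the asymmetric contributions from $L^{(n_1-1)}$ versus $L^{(n_2-1)}$ separately and verify that the binomial/factorial weights still conspire to kill the excess powers of $1/F$. I expect the cleanest route is to phrase the whole computation as an identity in the differential ring $\mathbb{Q}[F, F^{-1}, F', F'', \dots]$ with the derivation $D$ determined by $D F = FL$, i.e. to treat $L, L', \dots$ as the genuinely free variables and $F$ as an invertible unknown with $DF/F = L$; then both sides of the claimed identity are a priori elements of $\mathbb{Q}[L, L', \dots, L^{(n_1+n_2-1)}]$ (no $F$ at all on the left once one substitutes $F'/F = L$), so the content of the lemma is simply that this polynomial in the $L^{(i)}$, when re-expressed via $L^{(i)} = $ (polynomial in $F^{(\le i+1)}$)$/F^{i+1}$, clears to $P_{n_1,n_2}/F^{\max(n_1,n_2)}$ — a statement about the order of the pole at $F = 0$ that can be checked by evaluating the $L^{(i)}$-polynomial's degree in a grading where $\deg L^{(i)} = i+1$. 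Finally I would note that the rationality and $F$-independence of $P_{n_1,n_2}$ are automatic from this construction, since all the operations used (Leibniz, Faà di Bruno, the induction) introduce only rational constants depending on $n_1, n_2$.
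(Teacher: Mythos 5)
There is a genuine gap. You correctly compute the base case $L'+L^2=F''/F$ and you correctly isolate what must be proved, namely that the numerator of the combined expression over $F^{n_1+n_2}$ is divisible by $F^{\min(n_1,n_2)}$. But neither of the two mechanisms you offer actually delivers this. The grading argument with $\deg L^{(i)}=i+1$ cannot work: both $\frac{1}{(n_1+n_2-1)!}L^{(n_1+n_2-1)}$ and $\frac{1}{(n_1-1)!}L^{(n_1-1)}\cdot\frac{1}{(n_2-1)!}L^{(n_2-1)}$ are homogeneous of the \emph{same} degree $n_1+n_2$ in that grading, and each monomial of degree $d$ clears to denominator exactly $F^{d}$, so the grading only reproduces the trivial denominator $F^{n_1+n_2}$. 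The improvement to $F^{\max(n_1,n_2)}$ is not a degree phenomenon at all; it is a cancellation \emph{between} the two terms (it fails for each term separately), and your inductive route never exhibits it — ``the binomial/factorial weights conspire to kill the excess powers'' is asserted, not proved, and no explicit recursion linking $(n_1,n_2)$ to smaller pairs is written down that one could close an induction on.

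The paper proves the divisibility by a local analysis at a simple zero of $F$: writing $S_{n_1,n_2}=\sum_j Y_0^j Q_j^{n_1,n_2}$ and substituting $f(x)=xg(x)$ with $g(0)\neq 0$, one has $\frac{1}{(j-1)!}(f'/f)^{(j-1)}=\frac{(-1)^{j-1}}{x^{j}}+\frac{1}{(j-1)!}(g'/g)^{(j-1)}$, so the two worst singularities in $\Sigma_{n_1,n_2}(f)$ carry the signs $(-1)^{n_1+n_2-1}$ and $(-1)^{n_1-1}(-1)^{n_2-1}=(-1)^{n_1+n_2}$ and cancel exactly, leaving a pole of order only $\max(n_1,n_2)$; an induction on $k$ then forces $Q_k^{n_1,n_2}\equiv 0$ for $k<\min(n_1,n_2)$. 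This sign cancellation is the heart of the lemma and is absent from your argument; without identifying it (or an equivalent algebraic identity), the claimed denominator $F^{\max(n_1,n_2)}$ is not established.
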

\begin{proof}
We define 
\begin{multline*}
\Sigma_{n_1,n_2}(F) = 
\frac{1}{(n_1+n_2-1)!}\biggl( \frac{F'}{F}\biggr)^{(n_1+n_2-1)}
\\
+\frac{1}{(n_1-1)!}\biggl( \frac{F'}{F}\biggr)^{(n_1-1)}\frac{1}{(n_2-1)!}\biggl( \frac{F'}{F}\biggr)^{(n_2-1)}.
\end{multline*}
For each positive integer $j$ there is a polynomial $R_j(Y_0, Y_1, ..., Y_j)$ with rational coefficients and independent of F such that
\begin{equation*}
\frac{1}{(j-1)!}\biggl( \frac{F'}{F}\biggr)^{(j-1)}=R_j(F,F', ... , F^{(j)})F^{-j}.
\end{equation*}
We then have
\begin{equation*}
\Sigma_{n_1,n_2}(F) = S_{n_1,n_2}(F,F', ... , F^{(n_1+n_2)})F^{-(n_1+n_2)}
\end{equation*}
where $S_{n_1,n_2} = R_{n_1+n_2} +R_{n_1}R_{n_2}$ is a polynomial, that we can express in terms of powers of the first variable:

\begin{equation}\label{S}
S_{n_1,n_2}(Y_0,Y_1, ... , Y_{n_1+n_2})=\sum_{j=0}^{n_1+n_2-1} Y_0^j Q_j^{n_1,n_2}(Y_1, ..., Y_{n_1+n_2})
\end{equation}
for suitable polynomials $Q_j^{n_1,n_2}$ which are also independent of F.

We shall show that $Q_j^{n_1,n_2}$ is identically zero for all $j$ when  $0\le j \le \min(n_1, n_2)-1$, i.e. for all real numbers $y_1, ..., y_{n_1+n_2}$, $Q_j^{n_1,n_2}(y_1, ..., y_{n_1+n_2})=0$.\\

We take that there exists a polynomial $g$ such that $g(0) \neq 0$ and let  $f(x) = x g(x)$.

It can easily be shown by induction that for all $j$,
\begin{equation*}
\frac{1}{(j-1)!}\biggl( \frac{f'}{f}\biggr)^{(j-1)}=\frac{(-1)^{j-1}}{x^j}+\frac{1}{(j-1)!}\biggl( \frac{g'}{g}\biggr)^{(j-1)}
\end{equation*}
and so
\begin{align*}
\Sigma_{n_1,n_2}(f) =& \frac{(-1)^{n_1+n_2-1}}{x^{n_1+n_2}}+ \frac{1}{(n_1+n_2-1)!}\biggl( \frac{g'}{g}\biggr)^{(n_1+n_2-1)} 
+\frac{(-1)^{n_1-1}}{x^{n_1}}\frac{(-1)^{n_2-1}}{x^{n_2}}\\
+&\frac{1}{(n_1-1)!}\biggl( \frac{g'}{g}\biggr)^{(n_1-1)}\frac{(-1)^{n_2-1}}{x^{n_2}} + \frac{1}{(n_2-1)!}\biggl( \frac{g'}{g}\biggr)^{(n_2-1)}\frac{(-1)^{n_1-1}}{x^{n_1}}\\
+&\frac{1}{(n_1-1)!}\biggl( \frac{g'}{g}\biggr)^{(n_1-1)}\frac{1}{(n_2-1)!}\biggl( \frac{g'}{g}\biggr)^{(n_2-1)}
\end{align*}

\begin{multline*}
\Sigma_{n_1,n_2}(f)=\frac{1}{(n_1-1)!}\biggl( \frac{g'}{g}\biggr)^{(n_1-1)}\frac{(-1)^{n_2-1}}{x^{n_2}}
\\
 + \frac{1}{(n_2-1)!}\biggl( \frac{g'}{g}\biggr)^{(n_2-1)}\frac{(-1)^{n_1-1}}{x^{n_1}} + \Sigma_{n_1,n_2}(g).
\end{multline*}

Since $g(x)$ is bounded away from $0$ as $x$ approaches the origin, we have for each integer $k \in \lbrace0; ... ; \min(n_1,n_2)-1\rbrace$ in a deleted neighborhood of zero
\begin{equation}\label{ineg}
\vert f(x)^{n_1+n_2-k}\Sigma_{n_1,n_2}(f(x))\vert \le \alpha_k \vert x\vert^{\min(n_1,n_2)-k}
\end{equation}
wher $\alpha_k$ is a constant depending on $g$ and the neighborhood.
If we take $k=0$, the left hand member of the above estimate is $\vert S_{n_1,n_2}(f,f', ... , f^{(n_1+n_2)})\vert$.

So, if we let $x$ approach $0$, by continuity we have:
\begin{equation*}
S_{n_1,n_2}(f(0),f'(0), ... , f^{(n_1+n_2)}(0))=0
\end{equation*}
And as $f(0)=0$, 
\begin{equation*}
\sum_{j=1}^{n_1+n_2-1} f(0)^j Q_j^{n_1,n_2}(f'(0), ...,f^{(n_1+n_2)}(0)) =0.
\end{equation*}

From \eqref{S} we have $Q_0^{n_1,n_2}(f'(0), ...,f^{(n_1+n_2)}(0)) =0$.
As this holds for any $f$, we can choose any real numbers $(y_1, ..., y_{n_1+n_2})=(f'(0), ...,f^{(n_1+n_2)}(0))$ except possibly if $y_1 = 0$. Since $Q_0^{n_1,n_2}$ is continuous, the restriction on $y_1$ may be removed and we have $Q_0^{n_1,n_2}\equiv 0$.

Now let us assume we have shown $Q_0^{n_1,n_2}\equiv 0$,...., $Q_{k-1}^{n_1,n_2}\equiv 0$ for some $k \in \lbrace0, ... ,\min(n_1,n_2)-1\rbrace$.\\
Then
\begin{equation*}
 f(x)^{n_1+n_2-k}\Sigma_{n_1,n_2}(f(x))= \sum_{j=k}^{n_1+n_2-1} f(x)^{j-k} Q_j^{n_1,n_2}(f'(x), ...,f^{(n_1+n_2)}(x)).
\end{equation*}
Using the equation \eqref{ineg}, again in a deleted neighborhood of zero, we see that $Q_k^{n_1,n_2}\equiv 0$.
Thus $Q_j^{n_1,n_2}\equiv 0$ for $0\le j \le \min(n_1, n_2)-1$. And thus
\begin{equation*}
 F^{n_1+n_2}\Sigma_{n_1,n_2}(F)= \sum_{j=\min(n_1,n_2)}^{n_1+n_2-1} F^{j} 
Q_j^{n_1,n_2}(F',\cdots,F^{(n_1+n_2)}).
\end{equation*}

Now we take
\begin{equation}
P_{n_1,n_2}(Y_0,Y_1, ...., Y_{n_1+n_2})=\sum_{j=\min(n_1,n_2)}^{n_1+n_2-1} Y_0^{j-\min(n_1,n_2)} Q_j^{n_1,n_2}(Y_1, ..., Y_{n_1+n_2})
\end{equation}
and so
\begin{equation*}
\Sigma_{n_1,n_2}(F) = \frac{P_{n_1,n_2}(F,F', ...., F^{(n_1+n_2)})}{F^{\max(n_1, n_2)}}.
\end{equation*}
\end{proof}

\subsection{Application}\label{autresnotations}
We use the previous lemma (\ref{Diamond}) with Riemann's function $\zeta$. Let $D(f,s)$ be the Dirichlet series of a function $f$. We remind the reader that $\zeta(s) = D(1,s)$, $D'(f,s)$ = $D(-f \log,s)$ and thus for all $k \ge 1$, $\zeta^{(k)}(s)= D((-\log)^k,s)$.
Furthermore, we know that
\begin{equation}
\frac{\zeta'}{\zeta}(s)= - D(\Lambda,s) \quad  \text{et} \quad \frac{1}{\zeta}(s)= D(\mu,s).
\end{equation}

As for two functions $f$ and $g$, we have $D(f,s) + D(g,s) = D(f+g,s)$ and  $D(f,s) \times D(g,s) = D(f\star g,s)$, the right hand term in our identity is
\begin{multline*}
\frac{P_{n_1,n_2}(\zeta,\zeta', ...., \zeta^{(n_1+n_2)})}{\zeta^{\max(n_1, n_2)}}\\
=P\left(D(1,s),D(\log,s), ... D((\log)^{n_1+n_2-1},s)\right)D(\mu \star...\star \mu,s)
\end{multline*}
where the multiple convolution product of $\mu$ has at most $\max(n_1, n_2)$ factors.

This terms is equal to $D(P^{\star}(\log,\log^2,... \log^{n_1+n_2-1}, \mu),s)$ 
where $P^{\star}$ 
is a polynom in convolution products in which the power on $\mu$ is at most $\max(n_1, n_2)$. We will give an explicit expression of this polynom later, we use \cite{Diam}  for our calculation and their formula for the $n-$\textit{1}-th derivative of  $F'/F$. See their Lemma 5.1.

The left hand term is equal to
\begin{multline}
\frac{D(-\Lambda(-\log)^{n_1+n_2-1},s)}{(n_1+n_2-1)!} + \frac{D(-\Lambda(-\log)^{n_1-1},s)}{(n_1-1)!}\frac{D(-\Lambda(-\log)^{n_2-1},s)}{(n_2-1)!}\\
= D(\frac{-\Lambda(-\log)^{n_1+n_2-1}}{(n_1+n_2-1)!} + \frac{\Lambda(-\log)^{n_1-1}}{(n_1-1)!}\star \frac{\Lambda(-\log)^{n_2-1}}{(n_2-1)!},s).
\end{multline}
We can notice that
\begin{multline*}
\frac{-\Lambda(-\log)^{n_1+n_2-1}}{(n_1+n_2-1)!} + \frac{\Lambda(-\log)^{n_1-1}}{(n_1-1)!}\star \frac{\Lambda(-\log)^{n_2-1}}{(n_2-1)!} \\
= (-1)^{n_1+n_2} \left( \frac{\Lambda(\log)^{n_1+n_2 -1}}{(n_1+n_2-1)!}+\frac{\Lambda(\log)^{n_1 -1}}{(n_1-1)!}\star \frac{\Lambda(\log)^{n_2 -1}}{(n_2-1)!}\right).
\end{multline*}
Identifying, we deduce the fundamental following formula:

Let $\nu_1$ and $\nu_2$ be two integers $\ge 1$,
\begin{equation}
 \frac{\Lambda(\log)^{\nu_1+\nu_2 -1}}{(\nu_1+\nu_2-1)!}+\frac{\Lambda(\log)^{\nu_1 -1}}{(\nu_1-1)!}\star \frac{\Lambda(\log)^{\nu_2 -1}}{(\nu_2-1)!} = (-1)^{\nu_1+\nu_2} \left(\Sigma_1 + \Sigma_2 \right)
\end{equation}
where
\begin{equation*}
\Sigma_1 = \sum_{\pmb{k}}(\nu_1+\nu_2)\frac{(-1)^{k-1}(k-1)!}{k_1!k_2!...k_{\nu_1+\nu_2}!}\mu_k\star \prod_{j=1}^{\nu_1+\nu_2}\biggl( \frac{( \log)^j}{j!}\biggr)^{k_j}
\end{equation*}
where $\mu_k = \mu \star \mu \star ....\star \mu$ ($k$ times) and  $\pmb{k}$ is in $\mathbb{K}$, the set of all $(\nu-1+\nu_2)$-tuples $(k_1, k_2, ...,k_{\nu_1+\nu_2})$ de $\nu_1 + \nu_2$ satisfying
\begin{eqnarray}
\left\lbrace
\begin{array}{cc}
k_1+2 k_2+... +(\nu_1+\nu_2) k_{\nu_1+\nu_2} = \nu_1+\nu_2\\
k_1+... +k_{\nu_1+\nu_2} = k \le \max(\nu_1,\nu_2)\\
\end{array}\right.
\end{eqnarray}
and
\begin{equation*}
\Sigma_2 = \sum_{\pmb{k'},\pmb{k''}}\nu_1 \nu_2\frac{(-1)^{k'+k''}(k'-1)!(k''-1)!}{k'_1!...k'_{\nu_1}! k''_1!...k''_{\nu_2}!}\mu_{k'+k''}\star \prod_{j=1}^{\max(\nu_1,\nu_2)}\biggl( \frac{(\log)^j}{j!}\biggr)^{k'_j+k''_j}
\end{equation*}
where $\pmb{k'}$ is a $\nu_1$-tuple of integers  $(k'_1, k'_2, ...,k'_{\nu_1})$ and $\pmb{k''}$ is a $\nu_2$-tuple of integers $(k''_1, k''_2, ...,k''_{\nu_2})$ such that:
\begin{eqnarray}
\left\lbrace
\begin{array}{ccc}
k'_1+2 k'_2+... +\nu_1 k'_{\nu_1} = \nu_1\\
k''_1+2 k''_2+... +\nu_2 k''_{\nu_2} = \nu_2\\
k'_1+... +k'_{\nu_1} + k''_1+... +k''_{\nu_2} = k'+k'' \le \max(\nu_1, \nu_2).\\
\end{array}\right.
\end{eqnarray}
We define by $\mathbb{H}$ the set of $(\nu_1+\nu_2)$-tuples $\pmb{h}=(\pmb{k'},\pmb{k''})$.

We use the following convention: if $\nu_1 = \max(\nu_1, \nu_2)$ then $k''_j = 0$ for all  $j\ge \nu_2$ and if $\nu_2 = \max(\nu_1, \nu_2)$ then $k'_j = 0$ for all $j\ge \nu_1$.

The key here is that $\mu_k=0$ for all $k > \max(\nu_1, \nu_2)$ because of the previous lemma.

Note that under this last convention we can write:
\begin{eqnarray*}
\prod_{j=1}^{\max(\nu_1,\nu_2)}\biggl( \frac{(\log)^j}{j!}\biggr)^{k'_j+k''_j} =\prod_{j=1}^{\nu_1}\biggl( \frac{(\log)^j}{j!}\biggr)^{k'_j}\prod_{j=1}^{\nu_2}\biggl( \frac{(\log)^j}{j!}\biggr)^{k''_j} .
\end{eqnarray*}

\subsubsection*{Notations}
Let us define the functions $c$ and $w$ over $\mathbb{K}$ by
\begin{multline*}
\left\lbrace
\begin{array}{cc}
 \displaystyle
 c(\pmb{k}) = \frac{(-1)^{k-1}(k-1)!}{k_1!....k_{\nu_1+\nu_2}!}\\
 \displaystyle 
 w(\pmb{k}) = \frac{1}{1!^{k_1}}\frac{1}{2!^{k_2}}...\frac{1}{(\nu_1+\nu_2)!^{k_{\nu_1+\nu_2}}}\\
\end{array}\right.
\end{multline*}
where $k$ denotes the length of $\pmb{k}$ in $\mathbb{K}$, 
and the functions $\bar{c}$ and $\bar{w}$ over $\mathbb{H}$ by
\begin{multline*}
\left\lbrace
\begin{array}{cc}
 \displaystyle \bar{c}(\pmb{h}) = \frac{(-1)^{k'-1}(k'-1)!}{k'_1!....k'_{\nu_1}!}\frac{(-1)^{k''-1}(k''-1)!}{k''_1!....k''_{\nu_2}!}\\
  \displaystyle\bar{w}(\pmb{h}) = \frac{1}{1!^{k'_1}}\frac{1}{2!^{k'_2}}...\frac{1}{\nu_1!^{k'_{\nu_1}}}\frac{1}{1!^{k''_1}}\frac{1}{2!^{k''_2}}...\frac{1}{\nu_2!^{k''_{\nu_2}}}.\\
\end{array}\right.
\end{multline*}
where $h$ denotes the length of  $\pmb{h}=(\pmb{k'},\pmb{k''})$ in $\mathbb{H}$ .
For all  $m\ge 1 $, and where $k_1$, $k_2$,... $k_m$ are integers we define
\begin{equation}
L(k_1, ..., k_m) = \log \star ...\star \log \star \log^2 \star ... \star \log^2 \star ... \star \log^m \star ... \star \log^m
\end{equation}
where each $\log^i$ is in a $k_i$ convolution product of itself.

Hence with $\Lambda^{(k)} =\frac{\Lambda(\log)^{k -1}}{(k-1)!}$,  we have
\begin{multline*}
\Lambda^{(\nu_1+\nu_2)} + \Lambda^{(\nu_1)} \star\Lambda^{(\nu_2)}\\
=(\nu_1 +\nu_2)\sum_{\pmb{k} \in \mathbbm{K}}c(\pmb{k})w(\pmb{k})\mu_{k} \star L(\pmb{k}) + \nu_1 \nu_2 \sum_{\textbf{h} \in \mathbbm{H}}\bar{c}(\textbf{h})\bar{w}(\textbf{h})\mu_h \star L(\textbf{k'})\star L(\textbf{k''})
\end{multline*}
and we will use the following short form
\begin{equation}
\Lambda^{(\nu_1+\nu_2)} + \Lambda^{(\nu_1)} \star\Lambda^{(\nu_2)} = \sum_{\pmb{\ell}} a(\pmb{\ell})\mu_{\ell} \star L(\pmb{\ell}).
\end{equation}

\subsection{Preliminary estimates}

\subsubsection{A few bounds}

\begin{lem}\label{Fmn}
  For fixed integers $k_0$, $\nu_1$ and $\nu_2$,
  \begin{equation*}
  \sum_{\substack{\pmb{h} \in \mathbbm{H} \\ h = k_0}} \vert \bar{c}(\textbf{h})\vert  =\frac{(k'_0 - 1)!(k''_0-1)!}{\nu_1!\nu_2!} \binom{\nu_1+\nu_2}{k_0} \binom{\nu_1+\nu_2-1}{k_0-1} \tag{a}
  \end{equation*}
and  
   \begin{equation*}
\sum_{\substack{\pmb{k} \in \mathbbm{K} \\ k = k_0}} \vert c(\pmb{k})\vert = \frac{1}{k_0 (\nu_1+\nu_2 - k_0)!}\binom{\nu_1+\nu_2-1}{k_0 - 1} \le \frac{1}{k_0}\binom{\nu_1+\nu_2-1}{k_0 - 1} \tag{b}.
  \end{equation*}
\end{lem}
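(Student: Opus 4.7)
Both identities reduce to a single classical combinatorial fact: for any integers $N,K\ge 1$,
\begin{equation*}
\sum_{\substack{(k_1,\ldots,k_N)\\ \sum j k_j = N,\ \sum k_j = K}} \frac{K!}{k_1!\,k_2!\cdots k_N!} \;=\; \binom{N-1}{K-1}.
\end{equation*}
I would establish this first by a double counting argument: each ordered composition $(a_1,\ldots,a_K)$ of $N$ into $K$ positive parts determines a type vector $(k_j)$ with $k_j=\#\{i:a_i=j\}$; for a given type the number of compositions producing it is the multinomial $K!/(k_1!\cdots k_N!)$, and the total number of such compositions is the standard $\binom{N-1}{K-1}$.

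For part (b), I would apply this identity directly with $N=\nu_1+\nu_2$ and $K=k_0$. Since $|c(\pmb k)|=(k_0-1)!/(k_1!\cdots k_{\nu_1+\nu_2}!)$, dividing the identity by $k_0$ gives at once the claimed value $\frac{1}{k_0}\binom{\nu_1+\nu_2-1}{k_0-1}$, from which the stated inequality is immediate.

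For part (a), I would exploit the fact that $\bar c(\pmb h)$ factors as a product of terms depending separately on $\pmb k'$ and $\pmb k''$. Splitting the sum over $\pmb h\in\mathbb H$ with $h=k_0$ according to the values $k',k''$ (with $k'+k''=k_0$ and $k',k''\ge 1$, the lower bound being forced by $\sum j k'_j=\nu_1\ge 1$ and similarly for $\pmb k''$) decouples it as a convolution of two independent sums. Each inner sum collapses by the identity above, with $(N,K)=(\nu_1,k')$ and $(N,K)=(\nu_2,k'')$ respectively. Using the elementary rewriting $\frac{1}{k'}\binom{\nu_1-1}{k'-1}=\frac{1}{\nu_1}\binom{\nu_1}{k'}$ (and similarly for $k''$) puts the expression into a form ready for Vandermonde's convolution $\sum_{k'+k''=k_0}\binom{\nu_1}{k'}\binom{\nu_2}{k''}=\binom{\nu_1+\nu_2}{k_0}$.

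The only subtlety I anticipate lies at the end: the algebraic bookkeeping to match the precise form of the claimed RHS. The Vandermonde identity applies to an unrestricted sum over $k'\in\{0,\ldots,k_0\}$, whereas the constraint $k',k''\ge 1$ rules out the two boundary terms; these must be carried carefully, and the factorials and binomials rearranged to recover the stated expression.
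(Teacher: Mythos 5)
Your plan rests on the identity $\sum K!/(k_1!\cdots k_N!)=\binom{N-1}{K-1}$ (sum over types with $\sum jk_j=N$, $\sum k_j=K$), which you prove by counting compositions of $N$ into $K$ positive parts; the paper proves the very same identity by extracting the coefficient of $X^{K}Y^{N}$ from $(1+XY+\cdots+XY^{N})^{N}$ in two ways (and a four-variable variant for (a)). The content is equivalent, your route being the more elementary and transparent one. For (b) your computation correctly yields $\frac{1}{k_0}\binom{\nu_1+\nu_2-1}{k_0-1}$ — but be aware this is the \emph{upper bound} stated in the lemma (attained with equality), not the stated intermediate value, which carries a spurious factor $1/(\nu_1+\nu_2-k_0)!$: for $\nu_1+\nu_2=4$, $k_0=2$ the sum over the types $(1,0,1,0)$ and $(0,2,0,0)$ equals $3/2$, whereas the displayed value is $3/4$. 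Since only the inequality is used afterwards, this is harmless, but your proof establishes the inequality, not the first equality as printed.

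For (a), the ``algebraic bookkeeping'' you defer to the end is not bookkeeping: carried out, your decoupling plus Vandermonde terminates at
\begin{equation*}
\sum_{\substack{\pmb{h}\in\mathbbm{H}\\ h=k_0}}\vert\bar c(\pmb{h})\vert=\frac{1}{\nu_1\nu_2}\left(\binom{\nu_1+\nu_2}{k_0}-\binom{\nu_1}{k_0}-\binom{\nu_2}{k_0}\right),
\end{equation*}
and this cannot be rearranged into the stated right-hand side, because the stated identity (a) is itself erroneous (it even depends on $k_0',k_0''$, which are not determined by the condition $h=k_0$). Concretely, for $\nu_1=1$, $\nu_2=2$, $k_0=2$ the set contains the single tuple with $\pmb{k'}=(1)$, $\pmb{k''}=(0,1)$, so the sum is $1$ and your formula gives $\tfrac12(3-0-1)=1$, while the paper's formula gives $3$. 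The paper's slip is the claim that the two decoupled multinomial sums convolve to $\binom{\nu_1+\nu_2}{k_0}\binom{\nu_1+\nu_2-1}{k_0-1}$; identifying coefficients before setting $X_1=X_2$ gives $\binom{\nu_1}{k'}\binom{\nu_1-1}{k'-1}\binom{\nu_2}{k''}\binom{\nu_2-1}{k''-1}$ for each fixed $(k',k'')$, and the product of the two ``extra'' binomials does not Vandermonde away. So do not try to force your answer into the printed form: your evaluation is the correct one, and it still implies, with room to spare, the bound \eqref{cbar} and Lemmas \ref{majoc1} and \ref{majoc2}, which are all the paper uses downstream, since $\sum_{k_0}\frac{1}{\nu_1\nu_2}\binom{\nu_1+\nu_2}{k_0}\le 2^{\nu_1+\nu_2}/(\nu_1\nu_2)$.
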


\begin{proof}
The equality in (b) comes from computing the coefficients of  $X^k Y^{\nu_1+\nu_2}$ in $(1 + XY + XY^2 + ... +XY^{\nu_1+\nu_2})^{\nu_1+\nu_2} $ in two different ways (cf. \cite{Diam} between (5.10) and (5.11))
\begin{multline*}
(1 + XY + XY^2 + ... +XY^{\nu_1+\nu_2})^{\nu_1+\nu_2}\\ 
= \sum_{\substack{0 \le k_1, k_2, ...,k_{\nu_1+\nu_2}\le \nu_1+\nu_2 \\ 0 \le k_1+ k_2+ ...+k_{\nu_1+\nu_2}\le \nu_1+\nu_2}} \binom{\nu_1+\nu_2}{k_1, ...,k_{\nu_1+\nu_2}}X^{ k_1+ ...+k_{\nu_1+\nu_2}}Y^{k_1+2 k_2+ ...+(\nu_1+\nu_2)k_{\nu_1+\nu_2}}.
\end{multline*}
While
\begin{equation*}
 (1 + XY + XY^2 + ... +XY^{\nu_1+\nu_2})^{\nu_1+\nu_2}
\end{equation*}
is equal to
\begin{align*}
 &(1 + X(Y + Y^2 + ... +Y^{\nu_1+\nu_2}))^{\nu_1+\nu_2}\\
 &= \biggl( 1 + \frac{XY}{1 - Y} \biggr)^{\nu_1+\nu_2} + \mathcal{O}(XY^{\nu_1+\nu_2+1})\\
&= \sum_{k=0}^{\nu_1+\nu_2} \binom{\nu_1+\nu_2}{k}(XY)^k \sum_{j \ge 0} \binom{k-1+j}{k-1}Y^j 
+ \mathcal{O}(XY^{\nu_1+\nu_2+1}).
\end{align*}
We understand that $\mathcal{O}(XY^{\nu_1+\nu_2+1})$ denotes a power series in $X$ and $Y$ multiplied by $XY^{\nu_1+\nu_2+1}$.

Computing the term $X^k Y^{\nu_1+\nu_2}$ in both identities, which is $j=\nu_1+\nu_2-k$ in the last, we find that for all $k$
\begin{equation}
\sum_{\substack{k_1+ k_2+ ...+k_{\nu_1+\nu_2}=k \\  k_1+ 2 k_2+ ...+(\nu_1+\nu_2) k_{\nu_1+\nu_2}= \nu_1+\nu_2}} \binom{\nu_1+\nu_2}{k_1, ...,k_{\nu_1+\nu_2}} =\binom{\nu_1+\nu_2}{k} \binom{\nu_1+\nu_2-1}{k-1}
\end{equation}
hence
\begin{align}
\notag \sum_{\substack{\pmb{k} \in \mathbbm{K} \\ k = k_0}} \vert c(\pmb{k})\vert &= \sum_{\substack{\pmb{k} \in \mathbbm{K} \\ k = k_0}}\binom{k_0 - 1}{k_1,... k_{\nu_1+\nu_2}} \\
\notag &= \frac{(k_0 - 1)!}{(\nu_1+\nu_2)!}\sum_{\substack{k_1+ k_2+ ...+k_{\nu_1+\nu_2}=k \\  k_1+ 2 k_2+ ...+(\nu_1+\nu_2) k_{\nu_1+\nu_2}= \nu_1+\nu_2}} \binom{\nu_1+\nu_2}{k_1, ...,k_{\nu_1+\nu_2}}\\
\notag &= \frac{(k_0 - 1)!}{(\nu_1+\nu_2)!} \binom{\nu_1+\nu_2}{k_0} \binom{\nu_1+\nu_2-1}{k_0-1}\\
&= \frac{1}{k_0 (\nu_1+\nu_2 - k_0)!}\binom{\nu_1+\nu_2-1}{k_0 - 1}.
\end{align}

The inequality is obvious. We proceed in a similar way to establish the identity (a) using four variables

\begin{multline*}
(1 + X_1Y_1 + X_1Y_1^2 + ... X_1Y_1^{\nu_1})^{\nu_1}(1 + X_2Y_2 + X_2Y_2^2 + ... X_2Y_2^{\nu_2})^{\nu_2}\\ 
= \sum_{\substack{0 \le k'_1, k'_2, ...,k'_{\nu_1}\le \nu_1 \\ 0 \le k'_1+ k'_2+ ...+k'_{\nu_1}\le \nu_1}} \binom{\nu_1}{k'_1, ...,k'_{\nu_1}}X_1^{ k'_1+ ...+k'_{\nu_1}}Y_1^{k'_1+2 k'_2+ ...+\nu_1k'_{\nu_1}}\\
\times \sum_{\substack{0 \le k''_1, k''_2, ...,k''_{\nu_2}\le \nu_2 \\ 0 \le k''_1+ k''_2+ ...+k''_{\nu_2}\le \nu_2}} \binom{\nu_2}{k''_1, ...,k''_{\nu_2}}X_2^{ k''_1+ ...+k''_{\nu_2}}Y_2^{k''_1+2 k''_2+ ...+\nu_2k''_{\nu_2}}.
\end{multline*}
While, as previously,
\begin{align}
\notag & \left(\biggl( 1 + \frac{X_1Y_1}{1 - Y_1} \biggr)^{\nu_1} + \mathcal{O}(X_1Y^{\nu_1+1})\right)\left(\biggl( 1 + \frac{X_2Y_2}{1 - Y_2} \biggr)^{\nu_2} + \mathcal{O}(X_2Y^{\nu_2+1})\right)\\
&= \biggl( 1 + \frac{X_1Y_1}{1 - Y_1} \biggr)^{\nu_1}\biggl( 1 + \frac{X_2Y_2}{1 - Y_2} \biggr)^{\nu_2}+\mathcal{O}(X_1Y^{\nu_1+1}+X_2Y^{\nu_2+1}).\\
\end{align}
Again, we identify the term in $X_1^{k'}Y_1^{\nu_1}X_2^{k''} Y_2^{\nu_2}$ for a fixed $k'+k''=k$ and we finally take $X_1=X_2=X$ and $Y_1=Y_2=Y$:
\begin{align*}
\sum_{\substack{k'_1+ k'_2+ ...+k'_{\nu_1}=k' \\  k'_1+ 2 k'_2+ ...+\nu_1 k_{\nu_1}= \nu_1} }\binom{\nu_1}{k'_1, ...,k'_{\nu_1}} \sum_{\substack{k''_1+ k''_2+ ...+k''_{\nu_2}=k'' \\  k''_1+ 2 k''_2+ ...+\nu_2 k_{\nu_2}= \nu_2} }\binom{\nu_2}{k''_1,...,k''_{\nu_2}}=\binom{\nu_1+\nu_2}{k} \binom{\nu_1+\nu_2-1}{k-1}.
\end{align*}
but
\begin{align*}
\sum_{\substack{\pmb{h} \in \mathbbm{H} \\ h = k_0}} \vert \bar{c}(\pmb{h})\vert &= \sum_{\substack{\pmb{h} \in \mathbbm{H} \\ h = k_0}}\binom{k'_0 - 1}{k'_1,... k'_{\nu_1}}\binom{k''_0 - 1}{k''_1,... k''_{\nu_2}} \\
&= \frac{(k'_0 - 1)!(k''_0-1)!}{\nu_1!\nu_2!}\sum_{\substack{k'_1+ k'_2+ ...+k'_{\nu_1}=k'_0 \\  k'_1+ 2 k'_2+ ...+\nu_1 k'_{\nu_1}= \nu_1 \\ k''_1+ k''_2+ ...+k''_{\nu_2}=k''_0 \\  k''_1+ 2 k''_2+ ...+\nu_2 k''_{\nu_2}= \nu_2 }} \binom{\nu_1}{k'_1, ...,k'_{\nu_1}}\binom{\nu_2}{k''_1, ...,k''_{\nu_2}}\\
&=  \frac{(k'_0 - 1)!(k''_0-1)!}{\nu_1!\nu_2!} \binom{\nu_1+\nu_2}{k_0} \binom{\nu_1+\nu_2-1}{k_0-1}.\\
\end{align*}

\end{proof}
Following this result
\begin{equation*}
 \frac{(k'_0 - 1)!(k''_0-1)!}{\nu_1!\nu_2!} \le  \frac{(k_0 - 1)!}{\nu_1!\nu_2!}=\frac{(\nu_1+\nu_2)!(k_0-1)!(\nu_1+\nu_2-k_0)!}{(\nu_1+\nu_2-k_0)!\nu_1!\nu_2!(\nu_1+\nu_2)!}
\end{equation*}
bounding $\frac{(\nu_1+\nu_2)!}{(\nu_1+\nu_2-k_0)!}$ with $(\nu_1+\nu_2)(\nu_1+\nu_2-1)^{k_0-1}$, we have
\begin{align*}
\sum_{k_0=1}^{\max(\nu_1,\nu_2)}\sum_{\substack{\pmb{h} \in \mathbbm{H} \\ h = k_0}} \vert \bar{c}(\pmb{h})\vert &\le \sum_{k_0=1}^{\max(\nu_1,\nu_2)} \frac{\nu_1+\nu_2}{k_0}\frac{(\nu_1+\nu_2-1)^{k_0-1}}{\nu_1!\nu_2!} \binom{\nu_1+\nu_2-1}{k_0-1}\\
&\le \frac{\nu_1+\nu_2}{\nu_1!\nu_2!}\sum_{k_0=0}^{\nu_1+\nu_2-1}\binom{\nu_1+\nu_2-1}{k_0-1}(\nu_1+\nu_2-1)^{k_0-1} \\
&\le  \frac{\nu_1+\nu_2}{\nu_1!\nu_2!}(\nu_1+\nu_2)^{\nu_1+\nu_2-1} = \frac{(\nu_1+\nu_2)^{\nu_1+\nu_2}}{\nu_1!\nu_2!}. 
\end{align*}
Using the identity $n! \ge \left(\frac{n}{e}\right)^n$ we now have
\begin{equation*}
 \frac{(\nu_1+\nu_2)^{\nu_1+\nu_2}}{\nu_1!\nu_2!} \le \left(\frac{e}{\nu_1}\right)^{\nu_1} \left(\frac{e}{\nu_2}\right)^{\nu_2}(2\max(\nu_1,\nu_2))^{\nu_1+\nu_2} = (2e)^{\nu_1+\nu_2}\left(\frac{\max(\nu_1,\nu_2)}{\min(\nu_1,\nu_2)}\right)^{\min(\nu_1,\nu_2)}.
 \end{equation*} 
And so
\begin{equation}\label{cbar}
\sum_{k_0=1}^{\max(\nu_1,\nu_2)}\sum_{\substack{\pmb{h} \in \mathbbm{H} \\ h = k_0}} \vert \bar{c}(\pmb{h})\vert \le  6^{\nu_1+\nu_2}\left(\frac{\max(\nu_1,\nu_2)}{\min(\nu_1,\nu_2)}\right)^{\min(\nu_1,\nu_2)}.
\end{equation}
This bound will be used below to establish Lemmas \ref{majoc1} and \ref{majoc2}.

\begin{lem}
  For all \textbf{k} in $\mathbbm{K}$, $w(\pmb{k}) \le 2^{-\min(\nu_1, \nu_2)}$ and \textbf{h} in $\mathbbm{H}$, $\bar{w}(\pmb{h}) \le 2^{-\min(\nu_1, \nu_2)}$.
  
\end{lem}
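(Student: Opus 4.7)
The plan is to exploit the elementary bound $j! \ge 2^{j-1}$ valid for all integers $j\ge 1$ (verify at $j=1,2$ and induct), which gives $\frac{1}{(j!)^{k_j}} \le 2^{-(j-1)k_j}$ for every nonnegative integer $k_j$. Multiplying these inequalities across the factors in the definitions of $w$ and $\bar w$ turns the product bounds into a single power of $2$ whose exponent is a linear combination of the $k_j$'s, at which point the constraints defining $\mathbb{K}$ and $\mathbb{H}$ do all the work.

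For the first inequality, I would write
\begin{equation*}
w(\pmb{k}) = \prod_{j=1}^{\nu_1+\nu_2}\frac{1}{(j!)^{k_j}} \le \prod_{j=1}^{\nu_1+\nu_2} 2^{-(j-1)k_j} = 2^{-\sum_j (j-1)k_j} = 2^{-\left(\sum_j j k_j - \sum_j k_j\right)}.
\end{equation*}
The defining constraints for $\pmb{k}\in\mathbb{K}$ are $\sum_j j k_j = \nu_1+\nu_2$ and $\sum_j k_j = k \le \max(\nu_1,\nu_2)$, so the exponent equals $(\nu_1+\nu_2) - k \ge (\nu_1+\nu_2) - \max(\nu_1,\nu_2) = \min(\nu_1,\nu_2)$, yielding $w(\pmb{k}) \le 2^{-\min(\nu_1,\nu_2)}$.

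For the second inequality, the same bound applied separately to the $k'_j$ factors and the $k''_j$ factors gives
\begin{equation*}
\bar w(\pmb{h}) \le 2^{-\left(\sum_j (j-1)k'_j + \sum_j (j-1)k''_j\right)} = 2^{-\left((\nu_1 - k') + (\nu_2 - k'')\right)},
\end{equation*}
using $\sum j k'_j = \nu_1$, $\sum k'_j = k'$, $\sum j k''_j = \nu_2$, $\sum k''_j = k''$. Since $k'+k'' \le \max(\nu_1,\nu_2)$, the exponent is at least $\min(\nu_1,\nu_2)$, completing the proof.

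There is no real obstacle here; the only point to be careful about is confirming $j! \ge 2^{j-1}$ at the base cases (it is tight at $j=1,2$) so that the bound on each factor is sharp enough that the arithmetic on the exponent still lands on $\min(\nu_1,\nu_2)$ rather than something weaker.
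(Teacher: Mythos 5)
Your proof is correct and is essentially the same as the paper's: the paper takes logarithms and bounds $-\log w(\pmb{k})=\sum_j k_j\sum_{\ell=1}^{j}\log\ell\ge \log 2\sum_j (j-1)k_j$, which is exactly your inequality $j!\ge 2^{j-1}$ in logarithmic form, and then invokes the same constraints $\sum_j jk_j=\nu_1+\nu_2$ and $\sum_j k_j\le\max(\nu_1,\nu_2)$. The two arguments differ only in presentation.
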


\begin{proof}
As $w(\pmb{k}) = e^{\log(w(\pmb{k}))}$ we find a lower bound for $S= - \log(w(\pmb{k}))=\sum_{j=1}^{\nu_1 + \nu_2}k_j \sum_{\ell=1}^{j}\log \ell $.
\begin{align*}
S &\geq  \log 2 \sum_{j=1}^{\nu_1 + \nu_2}k_j \sum_{\ell=2}^{j}1 \\
&\geq   \log 2 \sum_{j=1}^{\nu_1 + \nu_2}j k_j - \log 2 \sum_{j=1}^{\nu_1 + \nu_2}k_j\\
&\geq  (\nu_1+\nu_2)\log 2 - \max(\nu_1, \nu_2)\log 2 \geq  \min(\nu_1, \nu_2)\log 2.
\end{align*}
Proceeding the same way with  $S= - \log(\bar{w}(\pmb{h}))$, we find that
\begin{align*}
S &\geq  \log 2 \sum_{j=1}^{\nu_1}k'_j \sum_{\ell=2}^{j}1 + \log 2 \sum_{j=1}^{\nu_2}k''_j \sum_{\ell=2}^{j}1\\
&\geq   \log 2 (\sum_{j=1}^{\nu_1}j k'_j+\sum_{j=1}^{\nu_2}j k''_j )- \log 2 (\sum_{j=1}^{\nu_1}k'_j+\sum_{j=1}^{\nu_2}k''_j)\\
&\geq  (\nu_1+\nu_2)\log 2 - \max(\nu_1, \nu_2)\log 2 \geq  \min(\nu_1, \nu_2)\log 2.
\end{align*}
\end{proof}

\begin{lem}\label{majoc2}
\begin{equation}
(\nu_1 +\nu_2)\sum_{\pmb{k} \in \mathbbm{K}}\vert c(\pmb{k})\vert + \nu_1 \nu_2 \sum_{\textbf{h} \in \mathbbm{H}}\vert \bar{c}(\textbf{h})\vert \le 8^{\nu_1+\nu_2}\frac{(\max(\nu_1,\nu_2))^{\min(\nu_1,\nu_2)+1}}{(\min(\nu_1,\nu_2))^{\min(\nu_1,\nu_2)-1}}
\end{equation}

\end{lem}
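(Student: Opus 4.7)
The strategy is to bound the two pieces separately using the results already established. For the first piece, Lemma~\ref{Fmn}(b) together with the obvious estimate $1/k_0 \le 1$ yields
\begin{equation*}
\sum_{\pmb{k}\in\mathbbm{K}} \vert c(\pmb{k})\vert
= \sum_{k_0=1}^{\max(\nu_1,\nu_2)} \sum_{\substack{\pmb{k}\in\mathbbm{K}\\ k=k_0}}\vert c(\pmb{k})\vert
\le \sum_{k_0=1}^{\nu_1+\nu_2}\binom{\nu_1+\nu_2-1}{k_0-1} = 2^{\nu_1+\nu_2-1},
\end{equation*}
so that $(\nu_1+\nu_2)\sum_{\pmb{k}} \vert c(\pmb{k})\vert \le (\nu_1+\nu_2)2^{\nu_1+\nu_2-1}$. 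For the second piece, we invoke directly the bound \eqref{cbar} just proved, giving
\begin{equation*}
\nu_1\nu_2 \sum_{\pmb{h}\in\mathbbm{H}}\vert \bar{c}(\pmb{h})\vert
\le \nu_1\nu_2\cdot 6^{\nu_1+\nu_2}\left(\frac{\max(\nu_1,\nu_2)}{\min(\nu_1,\nu_2)}\right)^{\min(\nu_1,\nu_2)}
= 6^{\nu_1+\nu_2}\,\frac{(\max(\nu_1,\nu_2))^{\min(\nu_1,\nu_2)+1}}{(\min(\nu_1,\nu_2))^{\min(\nu_1,\nu_2)-1}},
\end{equation*}
using $\nu_1\nu_2 = \max(\nu_1,\nu_2)\min(\nu_1,\nu_2)$.

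It then remains to check that the sum of the two bounds is dominated by $8^{\nu_1+\nu_2}(\max)^{\min+1}/(\min)^{\min-1}$. Writing $M=\max(\nu_1,\nu_2)$ and $m=\min(\nu_1,\nu_2)$, the target is $8^{M+m}M^2(M/m)^{m-1}$. The second piece contributes a factor $(6/8)^{M+m} = (3/4)^{M+m} \le 9/16$ of the target (since $M+m\ge 2$). For the first piece, $(M+m)2^{M+m-1}\le M\cdot 2^{M+m}$, and the ratio to the target is $\frac{1}{M\cdot 4^{M+m}(M/m)^{m-1}} \le \frac{1}{16}$ since $M\ge 1$ and $M+m\ge 2$. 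Adding, the total is at most $(9/16 + 1/16)\cdot 8^{M+m}M^{m+1}/m^{m-1} < 8^{M+m}M^{m+1}/m^{m-1}$, which is precisely the claimed bound.

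The step that required most care already occurred in establishing \eqref{cbar} (the Stirling-type estimate $n!\ge (n/e)^n$ combined with the generating-function computation). Here everything reduces to two elementary comparisons of exponentials, and the only mild obstacle is confirming that the constants $2^{M+m-1}$ and $6^{M+m}$ combine to less than $8^{M+m}$ once multiplied through by the outer factors $(\nu_1+\nu_2)$ and $\nu_1\nu_2$; the slack provided by going from base $6$ to base $8$ is exactly what absorbs the polynomial prefactor from the first sum.
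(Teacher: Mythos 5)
Your proof is correct and follows essentially the same route as the paper: bound the first sum via Lemma \ref{Fmn}(b), invoke \eqref{cbar} for the second, and absorb both into $8^{\nu_1+\nu_2}$ using the slack between bases $6$ and $8$. The only (harmless) difference is that the paper uses the identity $\binom{\nu_1+\nu_2-1}{k_0-1}/k_0=\binom{\nu_1+\nu_2}{k_0}/(\nu_1+\nu_2)$ to cancel the prefactor $(\nu_1+\nu_2)$ and get $2^{\nu_1+\nu_2}$ outright, whereas you keep the prefactor and absorb it in the final comparison, which your explicit $9/16+1/16<1$ computation handles correctly.
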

\begin{proof}
From Lemma \ref{Fmn},
\begin{multline*}
(\nu_1 +\nu_2)\sum_{\pmb{k} \in \mathbbm{K}}\vert c(\pmb{k})\vert 
\le (\nu_1+\nu_2) \sum_{k_0=1}^{\max(\nu_1,\nu_2)} \binom{\nu_1+\nu_2-1}{k_0 - 1}\frac{1}{k_0}
\end{multline*}
but
\begin{equation*}
\binom{\nu_1+\nu_2-1}{k_0 - 1}\frac{1}{k_0} = \frac{1}{\nu_1+\nu_2}\binom{\nu_1+\nu_2}{k_0 }
\end{equation*}
and
\begin{equation*}
 \sum_{k_0=1}^{\max(\nu_1,\nu_2)}\binom{\nu_1+\nu_2}{k_0 } \le  \sum_{k_0=0}^{\nu_1+\nu_2}\binom{\nu_1+\nu_2}{k_0 } = 2^{\nu_1+\nu_2}
\end{equation*}
so, using \eqref{cbar}, we have
\begin{align*}
(\nu_1 +\nu_2)\sum_{\pmb{k} \in \mathbbm{K}}\vert c(\pmb{k})\vert + \nu_1 \nu_2 \sum_{\textbf{h} \in \mathbbm{H}}\vert \bar{c}(\textbf{h})\vert
&\le  2^{\nu_1+ \nu_2} +6^{\nu_1+\nu_2}\nu_1\nu_2\left(\frac{\max(\nu_1,\nu_2)}{\min(\nu_1,\nu_2)}\right)^{\min(\nu_1,\nu_2)}\\
&\le 8^{\nu_1+\nu_2}\frac{(\max(\nu_1,\nu_2))^{\min(\nu_1,\nu_2)+1}}{(\min(\nu_1,\nu_2))^{\min(\nu_1,\nu_2)-1}}.
\end{align*}
\end{proof}

\begin{lem}\label{majoc1}
\begin{equation}
(\nu_1 +\nu_2)\sum_{\pmb{k} \in \mathbbm{K}}\vert c(\pmb{k})w(\pmb{k})\vert + \nu_1 \nu_2 \sum_{\textbf{h} \in \mathbbm{H}}\vert \bar{c}(\textbf{h})\bar{w}(\textbf{h})\vert \le 8^{\nu_1+\nu_2}\frac{(\max(\nu_1,\nu_2))^{\min(\nu_1,\nu_2)+2}}{(2\min(\nu_1,\nu_2))^{\min(\nu_1,\nu_2)}}
\end{equation}
\end{lem}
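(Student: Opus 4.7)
The plan is to treat this lemma as a direct consolidation of the two preceding ones: the uniform bound $w(\pmb{k}),\bar{w}(\pmb{h})\le 2^{-\min(\nu_1,\nu_2)}$ from the intermediate lemma, and the $w$-free estimate of Lemma \ref{majoc2}. First I would pull the common factor $2^{-\min(\nu_1,\nu_2)}$ out of both sums on the left-hand side, writing
\begin{equation*}
(\nu_1+\nu_2)\sum_{\pmb{k}\in\mathbb{K}}|c(\pmb{k})w(\pmb{k})|+\nu_1\nu_2\sum_{\pmb{h}\in\mathbb{H}}|\bar{c}(\pmb{h})\bar{w}(\pmb{h})|
\le 2^{-\min(\nu_1,\nu_2)}\Bigl[(\nu_1+\nu_2)\sum_{\pmb{k}}|c(\pmb{k})|+\nu_1\nu_2\sum_{\pmb{h}}|\bar{c}(\pmb{h})|\Bigr].
\end{equation*}

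Next I would insert Lemma \ref{majoc2} to bound the bracket by $8^{\nu_1+\nu_2}(\max(\nu_1,\nu_2))^{\min(\nu_1,\nu_2)+1}/(\min(\nu_1,\nu_2))^{\min(\nu_1,\nu_2)-1}$. The remaining work is purely algebraic: rewrite
\begin{equation*}
\frac{2^{-\min(\nu_1,\nu_2)}}{(\min(\nu_1,\nu_2))^{\min(\nu_1,\nu_2)-1}}
=\frac{\min(\nu_1,\nu_2)}{(2\min(\nu_1,\nu_2))^{\min(\nu_1,\nu_2)}},
\end{equation*}
and then use the trivial inequality $\min(\nu_1,\nu_2)\le\max(\nu_1,\nu_2)$ to promote the leading $\min$ to an extra factor of $\max$, yielding the claimed right-hand side $8^{\nu_1+\nu_2}(\max)^{\min+2}/(2\min)^{\min}$.

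No step looks genuinely difficult: the combinatorial heart of the argument was already done in Lemma \ref{Fmn} (and its exploitation in Lemma \ref{majoc2}) and in the bound on $w$, $\bar{w}$. The only care needed is bookkeeping — tracking the exponents of $2$, of $\min$, and of $\max$ through the final simplification — and in particular noticing that the slightly wasteful step $\min\le\max$ at the very end is what produces the asymmetric exponent $\min+2$ in the denominator-free factor of the stated bound. If one wished to be sharper one could keep the intermediate form and save a factor of $\max/\min$, but for the later applications the weaker form stated here is what is used.
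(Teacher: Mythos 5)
Your proposal is correct and is exactly the argument the paper intends: factor out the uniform bound $2^{-\min(\nu_1,\nu_2)}$ on $w$ and $\bar w$, apply Lemma \ref{majoc2}, and finish with $\min(\nu_1,\nu_2)\le\max(\nu_1,\nu_2)$ (the paper's proof is the one-line remark that the result "is easily deduced from the previous lemmas with $\min(\nu_1,\nu_2)\le \max(\nu_1,\nu_2)$"). Your algebraic bookkeeping, including the identity $2^{-m}/m^{m-1}=m/(2m)^m$, is accurate.
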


\begin{proof}
It is easily deduced from the previous lemmas with $\min(\nu_1,\nu_2)\le \max(\nu_1,\nu_2)$.

\end{proof}

We correct Lemma 16 from \cite{Ram} using those two following preliminary lemmas
\begin{lem}\label{lemme14}
For $300 \le a \le b$ we have
\begin{equation*}
\prod_{a<p\le b} \left(1 - \frac{1}{p}\right)^{-1} \le 1.04 \frac{\log b}{\log a}.
\end{equation*}
\end{lem}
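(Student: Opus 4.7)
The natural approach is to write the product as a ratio of two Mertens-type products and invoke an explicit version of Mertens' third theorem. Starting from
\[
\prod_{a<p\le b}\left(1-\frac{1}{p}\right)^{-1}
= \frac{\prod_{p\le b}(1-1/p)^{-1}}{\prod_{p\le a}(1-1/p)^{-1}},
\]
I would apply the classical Rosser--Schoenfeld explicit form of Mertens' third theorem: for every $x\ge 285$,
\[
e^{\gamma}\log x\left(1-\frac{1}{2\log^2 x}\right) < \prod_{p\le x}\left(1-\frac{1}{p}\right)^{-1} < e^{\gamma}\log x\left(1+\frac{1}{2\log^2 x}\right).
\]

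Using the upper bound at $b$ in the numerator and the lower bound at $a$ in the denominator (which is valid because $a\ge 300>285$), the $e^{\gamma}$ factors cancel and one obtains
\[
\prod_{a<p\le b}\left(1-\frac{1}{p}\right)^{-1}
\le \frac{\log b}{\log a}\cdot\frac{1+\tfrac{1}{2\log^2 b}}{1-\tfrac{1}{2\log^2 a}}.
\]
The multiplicative second factor is a decreasing function of both $b$ and $a$, so under the constraint $b\ge a\ge 300$ it is maximized at $b=a=300$. Since $\log 300 > 5.7$, one has $2\log^2 300 > 65$, hence
\[
\frac{1+\tfrac{1}{2\log^2 300}}{1-\tfrac{1}{2\log^2 300}} \le \frac{66}{64} = 1.03125 < 1.04,
\]
which finishes the proof.

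The only real work is the numerical check at the threshold $a=300$, and the only potential obstacle is the bookkeeping around the explicit Mertens inequality. If one preferred not to quote Rosser--Schoenfeld directly, an equivalent route is to start from an explicit form of Mertens' second theorem, $\sum_{p\le x}\tfrac{1}{p}=\log\log x+M+O^{*}(\varepsilon(x))$, use $-\log(1-1/p)=1/p+O(1/p^{2})$ with an explicit constant, subtract the estimates at $a$ and $b$, exponentiate, and verify that at $a=300$ the resulting exponential factor lies below $1.04$; the margin of roughly $0.008$ in the bound above leaves ample room for either approach.
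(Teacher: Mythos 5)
The paper gives no proof of this lemma --- it simply refers to Lemma~14 of \cite{Ram} --- so there is nothing internal to compare against; your route (writing the incomplete Euler product as a ratio of two Mertens products and invoking an explicit form of Mertens' third theorem) is certainly the intended one. The reduction to bounding
\[
\frac{1+\tfrac{1}{2\log^{2}b}}{1-\tfrac{1}{2\log^{2}a}}
\]
and the observation that this is maximised at $a=b=300$ are both correct, as is the arithmetic $66/64<1.04$.

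The gap is in the explicit inequality you quote. Rosser--Schoenfeld's Theorem~7 does not give the symmetric two-sided bound with $\tfrac{1}{2\log^{2}x}$ on both sides; their error terms are asymmetric, and in the direction you need for the numerator (an upper bound for $\prod_{p\le b}(1-1/p)^{-1}$, i.e.\ a lower bound for $\prod_{p\le b}(1-1/p)$, the estimate valid for $x\ge 285$) the error term is $\tfrac{1}{\log^{2}x}$, not $\tfrac{1}{2\log^{2}x}$. With the correct constants the factor at the critical point $a=b=300$ becomes
\[
\frac{1+\tfrac{1}{2\log^{2}300}}{1-\tfrac{1}{\log^{2}300}}\approx\frac{1.0154}{0.9693}\approx 1.048>1.04,
\]
so the argument does not close; the ``margin of roughly $0.008$'' you invoke is in fact negative. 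The same problem afflicts your fallback route through $\sum_{p\le x}1/p$: adding the Rosser--Schoenfeld errors $\tfrac{1}{\log^{2}b}+\tfrac{1}{2\log^{2}a}$ plus the tail $\sum_{p>a}1/(p(p-1))$ and exponentiating gives roughly $e^{0.049}\approx 1.05$ at $a=b=300$. Since the entire content of the lemma is the explicit constant $1.04$, you must either quote and verify a genuinely sharper explicit Mertens estimate near $x=300$ (for instance Dusart's refinements, which however only take effect for $x\ge 2973$ and therefore force a separate finite verification on $300\le a\le b<2973$), or compute the relevant products directly in the small range. As written, the proof rests on a misquoted inequality and the numerical verification fails exactly at the boundary of the stated range.
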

See Lemma 14 from \cite{Ram}.

\begin{lem}\label{lemme15}
Let $H$ be a positive function satisfying
\begin{align*}
\sum_{p\le y} H(p) \log p \le \alpha y \qquad \text{pour } y \ge 0\\
\sum_p\sum_{\alpha\ge 2} H(p^{\alpha})p^{-\alpha}\log(p^{\alpha}) \le \beta
\end{align*}
then for $x>1$ we have
\begin{equation*}
\sum_{n\le X} H(n) \le (\alpha+\beta+1)\frac{x}{\log x}\sum_{n\le x}H(n)/n.
\end{equation*}
\end{lem}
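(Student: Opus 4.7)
The plan is to bound $S(x):=\sum_{n\le x}H(n)$ by controlling $S(x)\log x$ rather than $S(x)$ directly, using the decomposition $\log x = \log n + \log(x/n)$ for $n\le x$:
\[
S(x)\log x \;=\; \sum_{n\le x}H(n)\log n \;+\; \sum_{n\le x}H(n)\log(x/n).
\]
For the second sum, the elementary inequality $\log(x/n)\le x/n$ (valid for $n\le x$ since $\log y\le y$ when $y\ge 1$) gives the clean bound $\sum_{n\le x}H(n)\log(x/n)\le x\,T(x)$, where $T(x):=\sum_{n\le x}H(n)/n$. This accounts for the ``$+1$'' in the constant $\alpha+\beta+1$.

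For the first sum I will apply the convolution identity $\log n=\sum_{d\mid n}\Lambda(d)$ and swap the order of summation. Exploiting the multiplicativity implicit in the hypotheses, writing $d=p^a$ and $n=p^am$ with $(m,p)=1$ so that $H(n)=H(p^a)H(m)$, this produces
\[
\sum_{n\le x}H(n)\log n \;=\; \sum_{p^a\le x}\log(p^a)\,H(p^a)\sum_{\substack{m\le x/p^a\\ p\nmid m}} H(m).
\]
The inner sum is then bounded by $(x/p^a)\,T(x)$ (using $\sum_{m\le y}H(m)\le y\,T(y)\le y\,T(x)$), reducing the task to estimating the arithmetic quantity $\sum_{p^a\le x}H(p^a)\log(p^a)/p^a$. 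The contribution from $a\ge 2$ is bounded directly by $\beta$ via the second hypothesis (since $\log p\le\log(p^a)$), and the $a=1$ contribution is handled by Abel summation against the first hypothesis $\sum_{p\le y}H(p)\log p\le \alpha y$, combined with Lemma~\ref{lemme14} to absorb the extraneous logarithmic factor produced by the partial summation.

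Assembling these estimates yields $S(x)\log x\le (\alpha+\beta+1)\,x\,T(x)$, and dividing through by $\log x$ gives the claimed inequality. The main obstacle will be the $a=1$ contribution: a naive Abel summation applied to $\sum_{p\le y}H(p)\log p\le \alpha y$ produces only $\sum_{p\le x}H(p)\log p/p\le \alpha(1+\log x)$, which would contribute an unwanted term of size $\alpha\,x\,T(x)\log x$ and ruin the leading constant. The essential trick is to combine this raw Abel estimate with the Mertens-type bound of Lemma~\ref{lemme14} (exploiting the multiplicative structure), so as to keep the leading constant on the $a=1$ part at exactly $\alpha$ rather than $\alpha\log x$.
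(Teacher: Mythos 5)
The paper does not prove this lemma (it is quoted verbatim from Ramar\'e's Lemma 15), so I judge your proposal on its own terms. Your skeleton is the standard one: the decomposition $S(x)\log x=\sum_{n\le x}H(n)\log n+\sum_{n\le x}H(n)\log(x/n)$, the bound $\log(x/n)\le x/n$ giving the $+1$, and the factorisation $\sum_{n\le x}H(n)\log n=\sum_{p^a\le x}\log(p^a)H(p^a)\sum_{m\le x/p^a,\,p\nmid m}H(m)$. (That identity comes from $\log n=\sum_{p^a\|n}\log(p^a)$, not from $\log n=\sum_{d\mid n}\Lambda(d)$, which would give weight $\log p$ and the condition $p^a\mid n$ rather than $p^a\|n$; your displayed formula is the right one, your justification of it is not.) The $a\ge2$ contribution is handled correctly. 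But the $a=1$ contribution --- the heart of the lemma --- is not established, and the fix you propose cannot work. Once you bound the inner sum by $(x/p)T(x)$ you are committed to showing $\sum_{p\le x}H(p)\log p/p\le\alpha$, and that inequality is false in general: already for $H\equiv 1$ the left-hand side is $\sim\log x$ while the hypothesis $\sum_{p\le y}H(p)\log p\le\alpha y$ holds with $\alpha$ an absolute constant. Lemma \ref{lemme14} is a Mertens-type bound on $\prod(1-1/p)^{-1}$; it says nothing about $H$ and cannot absorb this genuinely divergent logarithm.

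The missing idea is to refrain from bounding the inner $m$-sum first when $a=1$, and instead to interchange the two summations so that the first hypothesis is applied at the scale $x/m$ for each fixed $m$:
\[
\sum_{p\le x}H(p)\log p\sum_{\substack{m\le x/p\\ p\nmid m}}H(m)
=\sum_{m\le x}H(m)\sum_{\substack{p\le x/m\\ p\nmid m}}H(p)\log p
\le\sum_{m\le x}H(m)\,\alpha\,\frac{x}{m}=\alpha\,x\,T(x).
\]
With this replacement the three pieces give $S(x)\log x\le(1+\alpha+\beta)\,x\,T(x)$ and the lemma follows upon dividing by $\log x$. Two further remarks: the argument requires $H$ to be multiplicative, which the statement omits but which you rightly treat as implicit; and your bound $\sum_{m\le y}H(m)\le yT(y)\le yT(x)$, used for the $a\ge2$ range, is correct.
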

See Lemma 15 from \cite{Ram}.

Lemma 16 will be amended in the following version
\begin{lem}\label{lemme16}
For $r\ge 1$ and $z\ge 300$ we have
\begin{equation*}
\sump_{n\le N}\frac{\tau_r(n)}{n}\le \left( 1.04\frac{\log N}{\log z} \right)^r \quad\text{et}\quad \sump_{n\le N} \tau_r(n) \le N 3^r \frac{(\log N)^{r-1}}{(\log z)^r}.
\end{equation*}
\end{lem}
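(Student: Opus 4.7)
The plan is to prove the first inequality by a direct Euler-product bound, and to derive the second by induction on $r$ using the first inequality as the key input. For the first inequality, I would use the multiplicativity of $\tau_r$ and the positivity of all terms to pass from the $z$-rough sum to a full Euler product: every $z$-rough $n\le N$ has all its prime factors in $(z,N]$, so
\begin{equation*}
\sump_{n\le N}\frac{\tau_r(n)}{n}\le \prod_{z<p\le N}\sum_{a\ge 0}\frac{\tau_r(p^a)}{p^a} = \prod_{z<p\le N}\left(1-\frac{1}{p}\right)^{-r}.
\end{equation*}
Lemma \ref{lemme14}, applied with $a=z$ and $b=N$, bounds the single factor $\prod_{z<p\le N}(1-1/p)^{-1}$ by $1.04\log N/\log z$; raising to the $r$-th power closes this half.

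For the second inequality I would treat the base case $r=1$ first, which asserts $\sump_{n\le N}1\le 3N/\log z$, by applying Lemma \ref{lemme15} to the indicator $H$ of $z$-rough integers. Its first hypothesis uses $H(p)=1$ for $p>z$ and a Chebyshev-type estimate $\theta(y)\le 1.02\,y$, giving $\alpha=1.02$; its second hypothesis is the convergent tail $\sum_{p>z}\sum_{a\ge 2}p^{-a}\log(p^a)\le 2\sum_{p>z}(\log p)/p^{2}$, which is small thanks to the restriction $z\ge 300$, so some small $\beta$ suffices. Feeding $\alpha+\beta+1$ into Lemma \ref{lemme15} together with the first inequality at $r=1$ yields the constant $3$ exactly. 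For the inductive step I would use $\tau_{r+1}=\tau_r\star 1$ and the stability of $z$-roughness under factorisation to rewrite
\begin{equation*}
\sump_{n\le N}\tau_{r+1}(n) = \sump_{a\le N}\tau_r(a)\sump_{b\le N/a}1 \le \frac{3N}{\log z}\sump_{a\le N}\frac{\tau_r(a)}{a},
\end{equation*}
and then insert the first inequality at level $r$; the resulting leading coefficient $3\cdot (1.04)^{r}$ is bounded by $3^{r+1}$ essentially for free, closing the induction.

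The main obstacle is the numerical book-keeping at the base case: one needs $\alpha+\beta+1\le 3/1.04\approx 2.88$, hence $\beta\le 0.86$ once Chebyshev contributes $\alpha=1.02$. This is comfortably provided by the bound $\sum_{p>300}(\log p)/p^{2}<0.05$ combined with the elementary identity $\sum_{a\ge 2}a x^{a}=x^{2}(2-x)/(1-x)^{2}$ evaluated at $x=1/p\le 1/300$. Once this case is secured, the remainder of the argument is the routine induction sketched above.
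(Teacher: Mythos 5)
Your treatment of the first inequality is essentially the paper's: both reduce to the Euler product $\prod_{z<p\le N}(1-1/p)^{-r}$ over the primes in $(z,N]$ and invoke Lemma \ref{lemme14} (the paper passes through $\sump_{n\le N}\tau_r(n)/n\le\bigl(\sump_{n\le N}1/n\bigr)^r$ first, which amounts to the same thing). For the second inequality your route genuinely differs: the paper applies Lemma \ref{lemme15} once, directly to $H=\tau_r$ restricted to $z$-rough integers (so $\alpha$ grows like $r$ and the prime-power tail must be estimated for $\tau_r(p^a)=\binom{a+r-1}{r-1}$), then absorbs the resulting constant into $3^r$; you apply Lemma \ref{lemme15} only at $r=1$ and induct via $\tau_{r+1}=\tau_r\star 1$. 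Your version is cleaner on the book-keeping, since the only $\alpha,\beta$ computation needed is for the indicator function.

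However, the inductive step as written has a gap. The bound $\sump_{b\le N/a}1\le 3(N/a)/\log z$ cannot hold uniformly in $a\le N$: the left side is always at least $1$ (it counts $b=1$), while the right side drops below $1$ as soon as $N/a<\tfrac{1}{3}\log z$. (A related boundary caveat already afflicts the lemma as stated — both displayed inequalities fail for $N$ much smaller than $z$, since their left sides are at least $1$ — so the statement must be read with the implicit restriction $N\ge z$ under which it is later used; but your induction invokes the $r=1$ case at the scales $N/a$ for every $a\le N$, and many of these fall below that threshold.) The fix is routine: split the outer sum at $a=N/z$. For $a\le N/z$ the base case applies legitimately and your computation gives $3(1.04)^rN(\log N)^r/(\log z)^{r+1}$; for $a>N/z$ the inner sum is exactly $1$ (any $z$-rough $b>1$ exceeds $z>N/a$), so this range contributes at most $\sump_{a\le N}\tau_r(a)\le 3^rN(\log N)^{r-1}/(\log z)^r$ by the inductive hypothesis, which for $N\ge z$ is at most $\tfrac{1}{3}\cdot 3^{r+1}N(\log N)^r/(\log z)^{r+1}$. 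Since $3(1.04)^r\le 2\cdot 3^r$, the two pieces together still close the induction.
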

\begin{proof}
\begin{equation*}
\sump_{n\le N}\frac{\tau_r(n)}{n}\le \left( \sump_{n\le N} \frac{1}{n}\right)^r \le \left( \prod_{z\le p \le N} \frac{1}{1 - \frac{1}{p}}\right)^r
\end{equation*}
and we use Lemma \ref{lemme14}. For the second identity, we apply Lemma \ref{lemme15} to the function $\tau$, noticing that $r^2(2.08)^r\le 3^r$.
\end{proof}
Some demonstrations of \cite{Ram} could be more detailed, which is what we do below.


\subsubsection{Other bounds}
The following bounds will be used later to establish our Theorem \ref{thm-crible} with a preliminary sieve. 

We recall that if $g$ is a $C^1$function over [1, X], $ \Vert g \Vert_{\infty} = \max_{1\le t\le X} \vert g(t) \vert$.
We will use $\Vert.\Vert$ for $ \Vert .\Vert_{\infty}$. And if $w=w_1\star ...\star w_k$, we define $\Vert w\Vert = \Vert w_1\Vert...\Vert w_{k}\Vert$.
\begin{lem}\label{lemme19}
Let w be a $C^1$ function over $[1, X]$,
\begin{equation*}
R'_z(w, \bar{f}, D, r)\le 3 R'_z(1, \bar{f}, D, r)\Vert w\Vert.
\end{equation*}
\end{lem}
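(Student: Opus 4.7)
The plan is to apply Abel summation in the inner variable $n$, transferring the weight $w$ onto an integral against the partial sums corresponding to $w\equiv 1$, and then to re-insert the factor $\tau_r(d)$ and sum on $d$. Fix $d\le D$ coprime to every prime $\le z$, fix $y\le X$, and set $N=\lfloor y/d\rfloor$. Put $a_n=\bar f(dn)$ when $n$ is coprime to every prime $\le z$ (and $0$ otherwise), and let $A(t)=\sum_{n\le t}a_n$, so that $A(N)=r'_{d,z}(1,y)$ and, more generally, $A(\lfloor u\rfloor)=r'_{d,z}(1,\lfloor u\rfloor d)$. Partial summation then gives
\[
r'_{d,z}(w,y)=w(N)\,A(N)-\int_1^N w'(u)\,A(\lfloor u\rfloor)\,du,
\]
so that, writing $M_d=\max_{y'\le X}|r'_{d,z}(1,y')|$ and using $|A(N)|,|A(\lfloor u\rfloor)|\le M_d$,
\[
|r'_{d,z}(w,y)|\le \|w\|_\infty\,M_d+M_d\int_1^N |w'(u)|\,du.
\]

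The crucial remaining step is to bound the total variation $\int_1^N|w'(u)|\,du$ by $2\|w\|_\infty$. This is automatic when $w$ is monotone on $[1,X]$, since in that case the integral collapses to $|w(N)-w(1)|\le 2\|w\|_\infty$; the same bound transfers to the one-bump (mollified) indicator functions later produced by Bernstein polynomials, which are precisely the $w$'s to which this lemma will be applied. Granting it, I obtain $|r'_{d,z}(w,y)|\le 3\|w\|_\infty M_d$. Taking the maximum over $y\le X$, multiplying by $\tau_r(d)$ and summing over admissible $d\le D$ yields
\[
R'_z(w,\bar f,D,r)\le 3\|w\|_\infty \sump_{d\le D}\tau_r(d)\,M_d = 3\|w\|_\infty\, R'_z(1,\bar f,D,r),
\]
which is the announced inequality.

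The main obstacle is precisely this total-variation estimate: for a genuinely arbitrary $C^1$ function the constant $3$ cannot hold, so the honest reading of the statement is with $w$ monotone (or more generally with $\int_1^X|w'|\le 2\|w\|_\infty$). This structural hypothesis is automatic in every subsequent use of Lemma \ref{lemme19} in the paper, and once it is granted everything else reduces to a routine Abel summation and a sum interchange against the defining sum of $R'_z$.
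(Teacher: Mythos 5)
Your Abel-summation argument is the standard proof and is essentially what the cited source does; the paper itself gives no proof here, deferring entirely to Lemma 19 of \cite{Ram}. Your structural observation is also correct and worth making explicit: with $\Vert\cdot\Vert$ defined as the sup norm alone, the constant $3$ genuinely requires $\int_1^X\vert w'(u)\vert\,du\le 2\Vert w\Vert_\infty$, which fails for an arbitrary $C^1$ function, so monotonicity (or a total-variation bound) is an implicit hypothesis. One small misattribution: the functions to which Lemma \ref{lemme19} is actually applied in this paper are not the Bernstein-polynomial mollifiers of the second part (those enter as coefficients $f_{\nu_1,\nu_2}$, never as weights inside $R'_z$); rather, the lemma is used through Lemma \ref{lemme20} on the individual factors of $w=w_1\star\cdots\star w_k$ with $w_i(t)=1$ or $\log^j t$ (building $\tau_\ell$ and $L(\pmb{\ell})$), all of which are nonnegative and nondecreasing, so that $\int_1^X\vert w_i'\vert=w_i(X)-w_i(1)\le\Vert w_i\Vert_\infty$ and the constant $3$ is in fact generous. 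With that correction the proof is complete and matches the intended one.
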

It is Lemma 19 from \cite{Ram}.

\begin{lem}\label{lemme20}
Let $w_1,..., w_k$ $k$ be $C^1$ functions over $[1, X]$, and $w=w_1\star ...\star w_k$. Then, for $1\le D\le X$, we have
\begin{equation*}
R'_z(w, \bar{f}, X(D/X)^k, r)\le 3(2^k-1)R'_z(1, \bar{f}, D, r+k-1)\Vert w\Vert.
\end{equation*}
\end{lem}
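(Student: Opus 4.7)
The plan is to prove Lemma~\ref{lemme20} by induction on $k$, with Lemma~\ref{lemme19} as the base case $k=1$. For the inductive step, I write $w = w_1 \star w'$ with $w' = w_2 \star \cdots \star w_k$ (so $\Vert w\Vert = \Vert w_1\Vert \Vert w'\Vert$), and I expand the Dirichlet convolution inside $r'_{d,z}$: since $(ab,\prod_{p\le z}p)=1$ is equivalent to both $a$ and $b$ being coprime to the primes $\le z$,
\[
r'_{d,z}(w,y) \;=\; \sump_{a \le y/d} w_1(a)\,r'_{da,z}(w',y).
\]
I then split the $a$-range at the threshold $A = X/D$. The threshold is chosen precisely so that $D_k \cdot A = X(D/X)^{k-1}$ on one side and $X/A = D$ on the other.

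For the \emph{small $a$} piece ($a \le X/D$), I bound $|w_1(a)| \le \Vert w_1\Vert$ uniformly, substitute $e = da$ (so $e \le X(D/X)^{k-1}$), and exchange summation. Using $\sum_{d\mid e}\tau_r(d) = \tau_{r+1}(e)$, the contribution to $R'_z(w,\bar f,X(D/X)^k,r)$ is at most $\Vert w_1\Vert\, R'_z(w',\bar f, X(D/X)^{k-1}, r+1)$. Applying the inductive hypothesis with the shift $r \mapsto r+1$ yields the bound $3(2^{k-1}-1)\Vert w\Vert\, R'_z(1,\bar f,D,r+k-1)$.

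For the \emph{large $a$} piece ($a > X/D$), I re-expand $r'_{da,z}(w',y) = \sump_{b \le y/(da)} w'(b)\bar f(dab)$, swap the order of summation in $a$ and $b$, and set $e = db$. The constraints $a > X/D$ and $dab \le y \le X$ force $e \le D$, and the inner sum becomes
\[
V_e(y) \;=\; \sump_{X/D < a \le y/e} w_1(a)\bar f(ea) \;=\; r'_{e,z}(w_1,y) - r'_{e,z}(w_1,eX/D),
\]
so $\max_y|V_e(y)| \le 2\max_{y'\le X}|r'_{e,z}(w_1,y')|$. Using the crude pointwise bound $|w'(m)| \le \tau_{k-1}(m)\Vert w'\Vert$ together with the identity $\tau_r \star \tau_{k-1} = \tau_{r+k-1}$, the contribution is at most $2\Vert w'\Vert\, R'_z(w_1,\bar f,D,r+k-1)$. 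Applying Lemma~\ref{lemme19} (the base case) then gives $6\Vert w\Vert\, R'_z(1,\bar f,D,r+k-1)$.

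Combining the two pieces, the total constant is $3(2^{k-1}-1)+6 \le 3(2^k-1)$ (with equality at $k=2$), closing the induction. The main obstacle is the careful bookkeeping across the variable changes: making sure the divisor weight $\tau_r(d)$ is transformed into exactly $\tau_{r+k-1}(e)$ under the substitutions $e=da$ and $e=db$, and making sure both sub-regions land within the level of distribution $D$ (respectively $X(D/X)^{k-1}$) so that the inductive hypothesis and Lemma~\ref{lemme19} both apply cleanly.
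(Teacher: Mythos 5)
Your argument is correct. Note first that the paper itself contains no proof of this lemma --- it only cites Lemma 20 of \cite{Ram} --- so the comparison is with Ramar\'e's original argument rather than with anything written here. Your induction is the standard hyperbola-type decomposition and every step checks out: the base case $k=1$ is exactly Lemma \ref{lemme19}; the identity $r'_{d,z}(w,y)=\sump_{a\le y/d}w_1(a)\,r'_{da,z}(w',y)$ is legitimate because coprimality to $P(z)$ is multiplicative; in the small-$a$ range the substitution $e=da$ lands at level $X(D/X)^{k-1}$ and $\tau_r\star 1=\tau_{r+1}$ feeds the inductive hypothesis with $r\mapsto r+1$; in the large-$a$ range the constraints $a>X/D$, $dab\le X$ do force $e=db\le D$, the two-point difference $r'_{e,z}(w_1,y)-r'_{e,z}(w_1,eX/D)$ is valid since $eX/D\le X$, and $|w'|\le\tau_{k-1}\Vert w'\Vert$ with $\tau_r\star\tau_{k-1}=\tau_{r+k-1}$ produces exactly the right divisor weight before Lemma \ref{lemme19} is applied to the single $C^1$ function $w_1$. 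The numerical closure $3(2^{k-1}-1)+6\le 3(2^k-1)$ holds for $k\ge 2$ with equality at $k=2$. One remark: your recursion is really $c_k\le c_{k-1}+6$ with $c_1=3$, so if you propagated your own bounds instead of the stated inductive hypothesis you would obtain the linear constant $6k-3$; the exponential constant $3(2^k-1)$ in the statement reflects a recursion of the shape $c_k=2c_{k-1}+3$, so your route is, if anything, slightly sharper than what the lemma demands.
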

For a proof see Lemma 20 in \cite{Ram}.

\subsubsection{A few lemmas}
And finally, a few lemmas that we will also need. The reader can find complete proofs of those in \cite{Ram} or \cite{Diam}.

\begin{lem}\label{lemme7}
\begin{equation*}
2 \nu \sum_{\pmb{k} \in \mathbbm{K}} \vert c(\pmb{k})w(\pmb{k})\vert + 
\nu^2\sum_{\pmb{h} \in \mathbbm{H}} \vert \bar{c}(\pmb{h})\bar{w}(\pmb{h})\vert \le (\nu +2) 2^{\nu-1}.
\end{equation*}
\end{lem}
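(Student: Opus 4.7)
The plan is to specialise the uniform bound $w(\pmb{k}),\bar w(\pmb{h})\le 2^{-\min(\nu_1,\nu_2)}=2^{-\nu}$ from the immediately preceding lemma. Factoring $2^{-\nu}$ out of the left-hand side of the statement reduces the inequality to the purely unweighted bound
\begin{equation*}
2\nu\sum_{\pmb{k}\in\mathbbm{K}}|c(\pmb{k})|+\nu^{2}\sum_{\pmb{h}\in\mathbbm{H}}|\bar c(\pmb{h})|\le(\nu+2)\,2^{2\nu-1}.
\end{equation*}

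For the first sum, Lemma \ref{Fmn}(b) gives $\sum_{\pmb{k}:\,k=k_0}|c(\pmb{k})|=\binom{2\nu-1}{k_0-1}/k_0\le\binom{2\nu-1}{k_0-1}$. Summing over $k_0\in\{1,\dots,\nu\}$ and invoking the symmetry $\binom{2\nu-1}{j}=\binom{2\nu-1}{2\nu-1-j}$ (so that $\sum_{j=0}^{\nu-1}\binom{2\nu-1}{j}=2^{2\nu-2}$ since $2\nu-1$ is odd) yields $\sum_{\pmb{k}\in\mathbbm{K}}|c(\pmb{k})|\le 2^{2\nu-2}$, whence $2\nu\sum|c|\le\nu\,2^{2\nu-1}$.

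The crucial observation for the second sum is that when $\nu_1=\nu_2=\nu$, $|\bar c(\pmb{h})|$ factorises across $\pmb{k'}$ and $\pmb{k''}$. Dropping the joint constraint $k'+k''\le\nu$ therefore gives
\begin{equation*}
\sum_{\pmb{h}\in\mathbbm{H}}|\bar c(\pmb{h})|\;\le\;\left(\sum_{\pmb{k'}\text{ partition of }\nu}\frac{(k'-1)!}{k'_1!\cdots k'_\nu!}\right)^{\!2}.
\end{equation*}
Applying Lemma \ref{Fmn}(b) once more but with $N=\nu$, and using $\binom{\nu-1}{k'-1}/k'=\binom{\nu}{k'}/\nu$, the inner sum equals $(2^\nu-1)/\nu\le 2^\nu/\nu$, so its square is at most $4^\nu/\nu^2$ and $\nu^2\sum|\bar c|\le 2^{2\nu}$.

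Putting both pieces together gives a total bounded by $\nu\,2^{2\nu-1}+2^{2\nu}=(\nu+2)\,2^{2\nu-1}$; re-inserting the factor $2^{-\nu}$ recovers exactly the advertised $(\nu+2)\,2^{\nu-1}$. The only non-routine step is the factorisation of $\sum_{\pmb{h}}|\bar c|$ into a square of two independent partition sums of $\nu$, which depends essentially on the symmetry $\nu_1=\nu_2$ and is what produces the improvement over Lemmas \ref{majoc1}--\ref{majoc2}.
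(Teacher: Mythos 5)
Your proof is correct. Note that the paper does not actually prove this lemma --- it is quoted verbatim from Ramar\'e's Lemma 7 --- so you have supplied a self-contained argument where the text defers to a reference. Your route is internally consistent with the paper's toolkit: the reduction via $w(\pmb{k}),\bar w(\pmb{h})\le 2^{-\min(\nu_1,\nu_2)}=2^{-\nu}$ is exactly the preceding lemma in the symmetric case $\nu_1=\nu_2=\nu$; the evaluation $\sum_{k=k_0}|c(\pmb{k})|\le\binom{2\nu-1}{k_0-1}/k_0$ is Lemma \ref{Fmn}(b); and the identity you invoke ``with $N=\nu$'' for the inner $\pmb{k'}$-sum is not literally Lemma \ref{Fmn}(b) but the underlying generating-function identity
\begin{equation*}
\sum_{\substack{k_1+\cdots+k_N=k\\ k_1+2k_2+\cdots+Nk_N=N}}\binom{N}{k_1,\ldots,k_N}=\binom{N}{k}\binom{N-1}{k-1},
\end{equation*}
which the paper's proof of part (a) already establishes separately for $\nu_1$ and $\nu_2$, so this reapplication is legitimate. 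The two genuinely good ideas --- using the oddness of $2\nu-1$ to get $\sum_{j\le\nu-1}\binom{2\nu-1}{j}=2^{2\nu-2}$ exactly, and dropping the joint constraint $k'+k''\le\nu$ so that $\sum_{\pmb h}|\bar c(\pmb h)|$ factorises into $\bigl((2^\nu-1)/\nu\bigr)^2$ --- are precisely what lets you land on $(\nu+2)2^{2\nu-1}$ with no slack, and they explain why this symmetric bound is so much sharper than the general-purpose estimates of Lemmas \ref{majoc1} and \ref{majoc2}, which pay an $8^{\nu_1+\nu_2}$ factor by bounding $|\bar c|$ through the cruder inequality \eqref{cbar}. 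The arithmetic $\nu\,2^{2\nu-1}+2^{2\nu}=(\nu+2)2^{2\nu-1}$ closes the argument exactly.
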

Cf. \cite{Ram}, Lemma 7.

\begin{lem}\label{lemme8}
\begin{equation*}
\frac{V_{\sigma}(z)}{V_{\sigma_0}(z)} \ge \frac{1}{2 c^2}.
\end{equation*}
where $c$ is still a constant such that $\prode_{v\le p\le u} (1- \sigma(p))^{-1} + \prode_{v\le p\le u} (1- \sigma_0(p))^{-1} \le c \log u/\log v$ où $2 \le v\le u$.
\end{lem}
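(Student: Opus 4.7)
The plan is to combine the lower bound on $V_\sigma(z)$ implicit in $(H_1)$ with the explicit upper bound on $V_{\sigma_0}(z)$ furnished by $(H_2)$. Both pieces are already in nearly the form needed, so the whole argument should fit in a few lines.

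First I would specialize $(H_1)$ with $v = 2$ and $u = z$. Dropping the nonnegative $\sigma_0$-term on the left gives
\begin{equation*}
\prode_{2 \le p \le z}(1-\sigma(p))^{-1} \le c \frac{\log z}{\log 2},
\end{equation*}
which, after inversion (and using the standard convention that primes $p \mid \mathfrak{f}$ contribute a factor $1$, so that the $\prode$ in $(H_1)$ matches the product defining $V_\sigma$), produces
\begin{equation*}
V_\sigma(z) \ge \frac{\log 2}{c \log z}.
\end{equation*}

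Next, $(H_2)$ reads $V_{\sigma_0}(z) \le c/\log z$, so that $1/V_{\sigma_0}(z) \ge (\log z)/c$. Multiplying the two bounds, the $\log z$ factors cancel and I find
\begin{equation*}
\frac{V_\sigma(z)}{V_{\sigma_0}(z)} \ge \frac{\log 2}{c^2} \ge \frac{1}{2 c^2},
\end{equation*}
where the final inequality uses the elementary numerical fact $\log 2 > 1/2$.

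There is no genuine obstacle: the argument is essentially bookkeeping between the two hypotheses. The only point that requires a moment of care is the convention $\prode$ versus $\prod$, but this introduces no loss. The bound obtained is in fact slightly stronger than the stated $1/(2c^2)$; the extra slack will presumably be convenient when this lemma is invoked later to absorb other small multiplicative perturbations.
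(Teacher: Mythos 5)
Your argument is correct: dropping the nonnegative $\sigma_0$-term in the fourth line of $(H_1)$ with $v=2$, $u=z$ gives $V_{\sigma}(z)\ge \log 2/(c\log z)$, and combining this with the upper bound $V_{\sigma_0}(z)\le c/\log z$ from $(H_2)$ yields $V_{\sigma}(z)/V_{\sigma_0}(z)\ge (\log 2)/c^2 > 1/(2c^2)$. The paper itself gives no proof (it only cites Lemma 8 of Ramar\'e), and your route — which is essentially the only one, since $(H_2)$ is genuinely needed to bound $V_{\sigma_0}(z)$ from above — is the intended one; your remark on the $\prode$ versus $\prod$ convention is also apt.
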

Cf. \cite{Ram}, Lemma 8.

\begin{lem}\label{lemme9}
For all $n \ge 1$ there exists $\theta_{+} \in ]0,1[$ such that
\begin{equation*}
n! = (2\pi n)^{1/2} (n/e)^n e^{\theta_{+}/(12n)}.
\end{equation*}
\end{lem}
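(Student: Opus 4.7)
The statement is a version of Stirling's formula with the explicit effective bound $0<\theta_+<1$ in the exponential correction $e^{\theta_+/(12n)}$. My plan is to prove it by a monotonicity argument on the auxiliary sequence
\begin{equation*}
d_n = \log n! - \bigl(n+\tfrac{1}{2}\bigr)\log n + n,
\end{equation*}
combined with the Wallis product to pin down the limiting constant. The first step is to compute
\begin{equation*}
d_n - d_{n+1} = \bigl(n+\tfrac{1}{2}\bigr)\log\!\bigl(1+\tfrac{1}{n}\bigr) - 1
\end{equation*}
and to expand $\log(1+1/n)$ as a power series in $1/n$. After multiplying out and collecting terms, the constant and $1/n$ contributions cancel, leaving a tail that I will bound from above and below by comparing with a geometric series.

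The second step is to extract from this tail the explicit telescoping estimate
\begin{equation*}
0 < d_n - d_{n+1} < \frac{1}{12 n} - \frac{1}{12(n+1)}.
\end{equation*}
The positivity follows from writing $\bigl(n+\tfrac{1}{2}\bigr)\log(1+1/n)-1$ as an alternating series with decreasing terms, while the upper bound follows by matching the leading $1/(12n^2)$ contribution and controlling the rest via $|x|<1/n$. This shows simultaneously that $(d_n)$ is strictly decreasing and that $(d_n - \tfrac{1}{12n})$ is strictly increasing, so both converge to a common limit $C$, and
\begin{equation*}
0 < d_n - C < \frac{1}{12 n}.
\end{equation*}

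The third step is to identify $C$. I will take the Wallis product
\begin{equation*}
\prod_{k=1}^{N}\frac{(2k)^2}{(2k-1)(2k+1)} \longrightarrow \frac{\pi}{2},
\end{equation*}
rewrite the partial product in closed form using factorials, substitute the ansatz $n! = e^{C} n^{n+1/2} e^{-n}\bigl(1+o(1)\bigr)$ from the previous step, and solve for $e^{C}$; this yields $C = \log\sqrt{2\pi}$. Setting $\theta_n = d_n - \log\sqrt{2\pi}$, the inequality from Step~2 becomes $0 < \theta_n < 1/(12n)$, so defining $\theta_+ = 12 n\, \theta_n \in (0,1)$ gives exactly the claimed identity
\begin{equation*}
n! = (2\pi n)^{1/2}(n/e)^n e^{\theta_+/(12n)}.
\end{equation*}

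The main technical obstacle is the sharp two-sided estimate on $d_n - d_{n+1}$ in the second step: one must show that the Taylor tail beyond the leading $1/(12n^2)$ does not spoil the telescoping comparison with $\tfrac{1}{12n}-\tfrac{1}{12(n+1)}$. This is where explicitness of the constant $1/12$ (as opposed to any larger one) is used, and the cleanest route is to group the series for $(n+\tfrac12)\log(1+1/n)-1$ in pairs so that the signs and sizes of successive terms are controlled uniformly in $n$.
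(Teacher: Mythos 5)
The paper does not actually prove this lemma: it is quoted as a known estimate, with a pointer to Diamond--Steinig (p.~203, ``2.3 Estimates'') and to Lemma~9 of Ramar\'e's paper. Your proposal therefore supplies a proof where the paper gives only a citation, and the proof you sketch is the standard Robbins argument, which is correct: the sequence $d_n=\log n!-(n+\tfrac12)\log n+n$ satisfies $d_n-d_{n+1}=(n+\tfrac12)\log(1+\tfrac1n)-1$, the two-sided bound $0<d_n-d_{n+1}<\tfrac1{12n}-\tfrac1{12(n+1)}$ makes $(d_n)$ decreasing and $(d_n-\tfrac1{12n})$ increasing with common limit $C$, Wallis identifies $C=\log\sqrt{2\pi}$, and $\theta_+=12n(d_n-C)\in(0,1)$ gives the statement. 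The only place where your sketch would need real care in a write-up is the upper bound in Step~2: the direct expansion of $\log(1+1/n)$ gives $d_n-d_{n+1}=\tfrac1{12n^2}-\tfrac1{12n^3}+\tfrac{3}{40n^4}-\cdots$, and you must beat $\tfrac1{12n(n+1)}=\tfrac1{12n^2}-\tfrac1{12n^3}+\tfrac1{12n^4}-\cdots$, so the comparison is decided only at order $n^{-4}$ and the ``pairing'' you allude to has to be done uniformly in $n$. The cleanest way to make this airtight is to substitute $t=1/(2n+1)$ and use $\log\frac{1+t}{1-t}=2\sum_{k\ge0}\frac{t^{2k+1}}{2k+1}$, which yields $d_n-d_{n+1}=\sum_{k\ge1}\frac{t^{2k}}{2k+1}$, manifestly positive and bounded above by $\frac{t^2}{3(1-t^2)}=\frac1{12n(n+1)}$ by a geometric-series comparison. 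With that adjustment your argument is complete and fully self-contained, which is arguably an improvement over the bare citation in the paper.
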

Cf. \cite{Diam} p203 (2.3 Estimates), or Lemma 9 in \cite{Ram}.

\begin{lem}\label{lemme17}
Let $m \le M$ be an integers with no prime factor $\le z$. For all $\pmb{\ell}$ in $\mathbbm{K}$ or $\mathbbm{H}$, 
\begin{equation*}
L(\pmb{\ell})(m) \le \tau_{\ell}(m)\left( \frac{e \log m}{\nu_1+\nu_2}\right)^{\nu_1+\nu_2} \times \frac{1}{w(\pmb{\ell})}.
\end{equation*}
\end{lem}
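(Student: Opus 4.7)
The plan is to expand $L(\pmb{\ell})(m)$ as a convolution sum over ordered factorizations of $m$, bound each summand pointwise by weighted AM--GM, and then use Stirling to convert the resulting combinatorial factor into the $e^{\nu_1+\nu_2}/w(\pmb{\ell})$ that appears in the claim.

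First, I would unfold the convolution. For $\pmb{\ell}=\pmb{k}\in\mathbb{K}$ of length $\ell=\sum_j k_j$, by definition
\begin{equation*}
L(\pmb{k})(m) = \sum_{d_1\cdots d_\ell = m} \prod_{i=1}^{\ell} (\log d_i)^{e_i},
\end{equation*}
where the exponent multiset $\{e_i\}$ contains $k_j$ copies of each $j\in\{1,\dots,\nu_1+\nu_2\}$. By the first constraint defining $\mathbb{K}$, $\sum_i e_i = \sum_j j k_j = \nu_1+\nu_2$. For $\pmb{\ell}=\pmb{h}=(\pmb{k'},\pmb{k''})\in\mathbb{H}$, one uses the analogous expansion for $L(\pmb{k'})\star L(\pmb{k''})$, whose exponent multiset is the disjoint union of those of $\pmb{k'}$ and $\pmb{k''}$, and the constraints $\sum j k'_j = \nu_1$, $\sum j k''_j = \nu_2$ again give $\sum e_i = \nu_1+\nu_2$.

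Next, for each fixed factorization I would set $x_i = \log d_i \geq 0$ and use $\sum_i x_i = \log m$. The maximum of $\prod_i x_i^{e_i}$ subject to $x_i\geq 0$ and $\sum x_i = \log m$ is attained (Lagrange) at $x_i = e_i\log m/(\nu_1+\nu_2)$, yielding the pointwise bound
\begin{equation*}
\prod_i (\log d_i)^{e_i} \;\leq\; \left(\frac{\log m}{\nu_1+\nu_2}\right)^{\nu_1+\nu_2} \prod_i e_i^{e_i} \;=\; \left(\frac{\log m}{\nu_1+\nu_2}\right)^{\nu_1+\nu_2} \prod_{j} j^{j k_j}.
\end{equation*}
I would then use the crude Stirling bound $j!\geq (j/e)^j$, equivalently $j^j \leq e^j j!$, to write
\begin{equation*}
\prod_j j^{j k_j} \;\leq\; \prod_j (e^j j!)^{k_j} \;=\; e^{\sum_j j k_j} \prod_j (j!)^{k_j} \;=\; \frac{e^{\nu_1+\nu_2}}{w(\pmb{k})},
\end{equation*}
since by definition $w(\pmb{k})^{-1}=\prod_j (j!)^{k_j}$. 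The same computation with $k_j$ replaced by $k'_j+k''_j$ handles $\pmb{h}\in\mathbb{H}$, giving $\prod_j j^{j(k'_j+k''_j)} \leq e^{\nu_1+\nu_2}/\bar w(\pmb{h})$.

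Finally, summing the pointwise estimate over the $\tau_\ell(m)$ ordered factorizations $d_1\cdots d_\ell=m$ gives exactly
\begin{equation*}
L(\pmb{\ell})(m) \;\leq\; \tau_\ell(m)\left(\frac{e\log m}{\nu_1+\nu_2}\right)^{\nu_1+\nu_2}\frac{1}{w(\pmb{\ell})}.
\end{equation*}
There is no serious obstacle; the argument is elementary. The only delicate bookkeeping is the identity $\prod_i e_i^{e_i}=\prod_j j^{jk_j}$ and matching the exponent $\sum_j j k_j = \nu_1+\nu_2$ so that the Stirling factor $e^{\nu_1+\nu_2}$ lines up with the $e$ inside the statement. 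The hypothesis that $m$ has no prime factor $\le z$ is not needed in the proof itself; it is simply the context in which the lemma will be applied.
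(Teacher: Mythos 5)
Your proof is correct. The paper itself gives no argument for this lemma (it only cites Lemma 17 of Ramar\'e's paper), but your derivation --- unfolding $L(\pmb{\ell})(m)$ over the $\tau_\ell(m)$ ordered factorizations, applying the weighted AM--GM bound $\prod_i x_i^{e_i}\le (\log m/(\nu_1+\nu_2))^{\nu_1+\nu_2}\prod_i e_i^{e_i}$ with $\sum_i e_i=\sum_j jk_j=\nu_1+\nu_2$, and converting $\prod_j j^{jk_j}$ into $e^{\nu_1+\nu_2}/w(\pmb{\ell})$ via $j^j\le e^j\,j!$ --- is the standard route and matches the stated constant exactly, including the $\mathbb{H}$ case where the exponents split as $k'_j+k''_j$. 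Your observation that the hypotheses ``$m\le M$'' and ``no prime factor $\le z$'' play no role in the bound itself, only in its later use alongside Lemma \ref{lemme16}, is also accurate.
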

Cf. \cite{Ram}, Lemma 17.

The following lemma will be used a few times later.
\begin{lem}\label{lemme24}
If $D_0/D\ge z$ and $\Delta (e + e r \delta)^{1/\delta} \le 1/2$, the remainder term $R'(w, \bar{f	}, X(D/X)^k , r)$ is not more than
\begin{multline*}
{3.2}^k \Vert w \Vert \biggl[ R(f, D_0, r+k)+ \frac{V_{\sigma}(z)}{V_{{\sigma}_0}(z)}R(f_0, D_0, r + k) \\
+ \left(2 C_0(c)e^{-\frac{\log \frac{D_0}{D}}{\log z}} + 2\Delta (e + e r \delta)^{1/\delta} \right) 
V_{\sigma}(z)(c/\delta)^{r+k-1}\hat{F}(X)\biggr].
\end{multline*}
\end{lem}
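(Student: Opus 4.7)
The plan is to combine Lemma \ref{lemme20} with a Möbius-sieve decomposition of the remainder $R'_z(1, \bar{f}, D, r+k-1)$, so as to remove the preliminary sieve up to $z$ and reduce matters to the hypotheses on $R(f, D_0, \cdot)$ and $R(f_0, D_0, \cdot)$.

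First I would apply Lemma \ref{lemme20} directly to obtain
\[
R'_z(w, \bar{f}, X(D/X)^k, r) \le 3(2^k - 1) \|w\| \, R'_z(1, \bar{f}, D, r+k-1).
\]
Since $3(2^k - 1) \le 3 \cdot 2^k$, this accounts for the leading factor of the stated inequality, and it remains to bound the quantity $R'_z(1, \bar{f}, D, r+k-1)$ by the content of the bracket.

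Next I would unfold the sifting in $r'_{d,z}(1, y) = \sump_{m \le y/d} \bar{f}(dm)$ via Möbius inversion over $P(z) = \prod_{p \le z} p$, and truncate the resulting $e$-sum at $e \le D_0/D$. For the truncated piece, $n := de \le D_0$, so the regularity hypothesis (\ref{approx2}) applies to both $f$ and $f_0$:
\[
\sume_{m \le y/(de)} f(dem) = \sigma(de) F(y) + r_{de}(f, y),
\]
and analogously for $f_0$ with $\sigma_0$. Using multiplicativity of $\sigma, \sigma_0$ and the identities $\sum_{e \mid P(z)} \mu(e) \sigma(e) = V_\sigma(z)$ and $\sum_{e \mid P(z)} \mu(e) \sigma_0(e) = V_{\sigma_0}(z)$, the $F(y)$ contribution from $\bar{f} = f - (V_\sigma(z)/V_{\sigma_0}(z)) f_0$ collapses, up to the truncation tail, to $V_\sigma(z)(\sigma(d) - \sigma_0(d)) F(y)$. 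Setting $n = de$, the residual $r_{de}$-terms assemble --- since each $n \le D_0$ carries divisor weight at most $\tau_{r+k}(n)$ --- into the two sums $R(f, D_0, r+k)$ and $(V_\sigma(z)/V_{\sigma_0}(z)) R(f_0, D_0, r+k)$ of the stated bound.

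Two error contributions remain. The tail $e > D_0/D$ in the Möbius sum is handled by Rankin's trick: using hypothesis (\ref{H1}),
\[
\sum_{\substack{e \mid P(z) \\ e > D_0/D}} \sigma(e) \le e^{-\log(D_0/D)/\log z}\!\!\sum_{e \mid P(z)} \sigma(e)\, e^{1/\log z} \ll C_0(c)\, V_\sigma(z) e^{-\log(D_0/D)/\log z}.
\]
Multiplying by $\hat{F}(X)$ and then summing against $\tau_{r+k-1}(d)$ over $d \le D$ coprime to $P(z)$ produces the first error in the parenthesis, the $(c/\delta)^{r+k-1}$ coming from $\sump_{d \le D} \tau_{r+k-1}(d)/d \le (c \log D / \log z)^{r+k-1} \le (c/\delta)^{r+k-1}$ (cf. Lemma \ref{lemme16}). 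The residual main-term discrepancy $V_\sigma(z)(\sigma(d) - \sigma_0(d))$ is expanded multiplicatively using $\Delta = \sum_{p^a \le X, p \ge z} |\sigma(p^a) - \sigma_0(p^a)|$; the smallness assumption $\Delta(e + er\delta)^{1/\delta} \le 1/2$ is precisely what makes the resulting generating-function sum converge and yields the second contribution $\Delta(e + er\delta)^{1/\delta}(c/\delta)^{r+k-1}$.

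The main obstacle I anticipate is the careful bookkeeping during the Möbius truncation: one must verify that the divisor-weighted counting of pairs $(d, e) \mapsto n = de$ yields exactly the weight $\tau_{r+k}(n)$ at level $D_0$ (not a larger divisor function or a larger level), and that the multiplicative expansion of $|\sigma - \sigma_0|$ produces the claimed factor $\Delta(e + er\delta)^{1/\delta}$ rather than something bigger. The hypothesis $D_0/D \ge z$ ensures at least one nontrivial term in the truncated Möbius sum and feeds into the Rankin bound, while the smallness hypothesis on $\Delta$ keeps the $\sigma - \sigma_0$ expansion convergent.
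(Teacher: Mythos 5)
The paper does not prove this lemma itself: it is quoted verbatim from Ramar\'e \cite{Ram} (Lemma 24), so there is no internal proof to compare against. Judged on its own merits, your outline has the right skeleton (reduce via Lemma \ref{lemme20} to $R'_z(1,\bar f,D,r+k-1)$, detect the sifting condition, collapse the main terms to $V_\sigma(z)(\sigma(d)-\sigma_0(d))F(y)$, and recognise the two error sources), but the central step is done with the wrong tool, and as written it fails.

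The gap is the sieve detection. If you expand $[(n,P(z))=1]=\sum_{e\mid(n,P(z))}\mu(e)$ exactly, the moduli $de$ range up to $d\cdot P(z)$, far beyond $D_0$, and the hypotheses $(H_4)$/$(H_9)$ give no control whatsoever on $r_{de}(f,y)$ for $de>D_0$; Rankin's trick only controls the \emph{main-term} part $\sum_{e>D_0/D}\sigma(e)$ of the tail, not these out-of-range remainder terms. If instead you truncate the M\"obius sum at $e\le D_0/D$ before applying (\ref{approx2}), the truncated sum is no longer a majorant or minorant of the sifting indicator, so you cannot bound the discarded part by positivity either. The correct mechanism --- and the reason the constant in the statement is the $C_0(c)$ of Lemma \ref{lemme22} --- is the Fundamental Lemma: one writes $\bar f=f-\tfrac{V_\sigma(z)}{V_{\sigma_0}(z)}f_0$, uses $\lambda^{+}$ on the nonnegative $f$ and $\lambda^{-}$ on the nonnegative $f_0$ (this is where $(H_3)$ is indispensable, since $\bar f$ itself has no fixed sign), with $M=D_0/D\ge z$ so that all moduli $dm$ stay below $D_0$ by construction, and reads off the factor $C_0(c)e^{-\log(D_0/D)/\log z}$ from the Fundamental Lemma's error term rather than from a Rankin computation. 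Your treatment of the $(c/\delta)^{r+k-1}$ factor and of the $\Delta$ term (and the role of the hypothesis $\Delta(e+er\delta)^{1/\delta}\le 1/2$) is essentially right once this substitution is made.
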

Cf. \cite{Ram}, Lemma 24.

\begin{lem}\label{lemme25}
\begin{equation*}
\sum_{h \ge 1}\frac{h^{\nu-1}}{p^{h-1}} \le \frac{(\nu-1)!}{(1-1/p)^{\nu}} \le 2^{\nu} (\nu-1)!
\end{equation*}
\end{lem}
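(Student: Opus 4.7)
The plan is to reduce the sum to a closed-form rational function by exploiting the identity $\sum_{k\ge 0}\binom{k+\nu-1}{\nu-1} x^k = (1-x)^{-\nu}$, then exploit $p\ge 2$ for the second inequality.

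First I would establish the elementary bound
\[
h^{\nu-1} \le h(h+1)(h+2)\cdots(h+\nu-2), \qquad h\ge 1,\ \nu\ge 1,
\]
which is immediate since the right-hand side is a product of $\nu-1$ factors each at least $h$. Rewriting this as $h^{\nu-1}\le (h+\nu-2)!/(h-1)!$ and substituting $k=h-1$ gives
\[
\sum_{h\ge 1}\frac{h^{\nu-1}}{p^{h-1}} \le \sum_{k\ge 0}\frac{(k+\nu-1)!}{k!}\,\frac{1}{p^{k}}
= (\nu-1)!\sum_{k\ge 0}\binom{k+\nu-1}{\nu-1}\frac{1}{p^{k}}.
\]

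Next I would recognize the last sum as the well-known generating function expansion of $(1-x)^{-\nu}$ evaluated at $x=1/p$ (valid since $p\ge 2$, so $|x|<1$). This yields the first inequality
\[
\sum_{h\ge 1}\frac{h^{\nu-1}}{p^{h-1}} \le \frac{(\nu-1)!}{(1-1/p)^{\nu}}.
\]

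For the second inequality, I would simply note that every prime $p$ satisfies $p\ge 2$, so $1-1/p\ge 1/2$ and therefore $(1-1/p)^{-\nu}\le 2^{\nu}$, giving the stated bound. There is no real obstacle here; the only subtle point is picking the right elementary majorant of $h^{\nu-1}$ so that the resulting series telescopes into the $(1-x)^{-\nu}$ expansion — a bound such as $h^{\nu-1}\le (h+\nu-1)!/h!$ would cost an extra factor of $p$ and miss the claimed constant.
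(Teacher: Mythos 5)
Your proof is correct and complete: the majorization $h^{\nu-1}\le h(h+1)\cdots(h+\nu-2)=(h+\nu-2)!/(h-1)!$ turns the series exactly into $(\nu-1)!\sum_{k\ge0}\binom{k+\nu-1}{\nu-1}p^{-k}=(\nu-1)!(1-1/p)^{-\nu}$, and $p\ge2$ gives the final bound. The paper itself does not prove this lemma but only cites Lemma~25 of Ramar\'e, so there is nothing to compare against; your argument is the standard one and can stand as a self-contained proof of the cited statement.
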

Cf. \cite{Ram}, Lemma 25.

And at last
\begin{lem}\label{lemme22} (Fundamental Lemma for a linear sieve)
Let $M \ge 2$ and $\tilde{z}\ge 1$ be two real parameters.

There exist two sequences $(\lambda_m^+)$ and $(\lambda_m^-)$ with the following properties:
\begin{equation*}
\lambda_1^+ = \lambda_1^- = 1 , \qquad \vert \lambda_m^+ \vert , \vert \lambda_m^- \vert \le 1, \qquad \lambda_m^+   = \lambda_m^- = 0
\quad \text{pour } m > M.
\end{equation*}
For all $n$, if $(n, P(\tilde{z})) \neq 1$, we have
\begin{equation*}
\sum_{m \mid n} \lambda_m^- \le 0 \le \sum_{m \mid n} \lambda_m^+
\end{equation*}
and if $(n, P(\tilde{z})) = 1$, we have
\begin{equation*}
\sum_{m \mid n} \lambda_m^- = \sum_{m \mid n} \lambda_m^+ =1.
\end{equation*}
For any multiplicative function $\tilde{\sigma}$ such that 
\begin{equation}
0\le \tilde{\sigma} < 1 \quad \text{et} \quad \prode_{v \le p \le u} \left(1 - \tilde{\sigma}(p)\right)^{-1} \le \tilde{c}\frac{\log u}{\log v} 
\quad (2 \le u \le v),
\end{equation}
we have
\begin{equation*}
\sum_{m \mid P(\tilde{z})} \lambda_m^+ \tilde{\sigma}(m) \le \left( 1 + C_0(\tilde{c})e^{-\frac{\log M}{\log \tilde{z}}}\right) 
\prode_{p \le \tilde{z}}\left( 1 - \tilde{\sigma}(p)\right)
\end{equation*}
and
\begin{equation*}
\sum_{m \mid P(\tilde{z})} \lambda_m^- \tilde{\sigma}(m) \ge \left( 1 - C_0(\tilde{c})e^{-\frac{\log M}{\log \tilde{z}}}\right) 
\prode_{p \le \tilde{z}}\left( 1 - \tilde{\sigma}(p)\right)
\end{equation*}
where $C_0(\tilde{c})$ is a number which depends only on the constant $\tilde{c}$.
\end{lem}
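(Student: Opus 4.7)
The plan is to construct the weights $(\lambda_m^\pm)$ as Rosser--Iwaniec combinatorial sieve weights. Fix a truncation parameter $\beta$ (eventually of size $O(1)$) and, for each squarefree $m = p_1 p_2 \cdots p_r$ dividing $P(\tilde z)$ with the prime factors ordered $p_1 > p_2 > \cdots > p_r$, set $\lambda_m^+$ equal to $\mu(m)$ if $p_1 \cdots p_{j-1} p_j^{\beta+1} \le M$ holds for every odd $j \le r$, and to zero otherwise; define $\lambda_m^-$ analogously with the condition imposed on even indices $j$. The equalities $\lambda_1^\pm = 1$, the bounds $|\lambda_m^\pm| \le 1$, and the support condition $\lambda_m^\pm = 0$ for $m > M$ are then immediate.

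First I would verify the combinatorial sign inequalities
\begin{equation*}
\sum_{m \mid n} \lambda_m^- \le \mathbbm{1}_{(n,P(\tilde z)) = 1} \le \sum_{m \mid n} \lambda_m^+.
\end{equation*}
When $(n, P(\tilde z)) = 1$, only $m = 1$ contributes, so both sides equal $1$. Otherwise, letting $q$ denote the largest prime factor of $\gcd(n, P(\tilde z))$, I would split the divisor sum according to whether $q \mid m$ and use induction on the number of prime factors of $\gcd(n, P(\tilde z))$, exploiting the fact that the truncation condition is preserved by the Buchstab recursion at each parity level.

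Next, to establish the main analytic bound on $S^\pm(\tilde\sigma, \tilde z) = \sum_{m \mid P(\tilde z)} \lambda_m^\pm \tilde\sigma(m)$, I would iterate Buchstab's identity. Writing $V(\tilde z) = \prod_{p \le \tilde z}(1 - \tilde\sigma(p))$, each application of Buchstab expresses $S^\pm(\tilde\sigma, \tilde z) - V(\tilde z)$ as a sum of terms indexed by "truncated" decompositions $m = p_1 \cdots p_j$ of length $j$, with $p_1 \cdots p_{j-1} p_j^{\beta+1} > M$. Using the Mertens-type hypothesis $\prod_{v \le p \le u}(1 - \tilde\sigma(p))^{-1} \le \tilde c \log u/\log v$ one bounds each level by $V(\tilde z)$ times a factor of order $(\tilde c \log M/\log \tilde z)^j/j!$ arising from the volume of available prime tuples. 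Summing geometrically in $j$ yields
\begin{equation*}
|S^\pm(\tilde\sigma, \tilde z) - V(\tilde z)| \le C_0(\tilde c)\, e^{-\log M/\log \tilde z}\, V(\tilde z),
\end{equation*}
which is the two inequalities claimed.

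The hardest step is the geometric convergence in the Buchstab unfolding: one must show that the restriction $p_1 \cdots p_{j-1} p_j^{\beta+1} > M$ kicks in rapidly enough to force $j \gtrsim \log M/\log \tilde z$ in every surviving term, and that the Mertens bound controls the resulting tuple sum by a factor involving $(\log u/\log v)^k/k!$ rather than $(\log u/\log v)^k$. Fixing $\beta$ and extracting $C_0(\tilde c)$ then reduces to a routine comparison between factorials and powers. The combinatorial bookkeeping at this stage is the delicate part; alternatively, the whole statement can be quoted directly from the standard linear-sieve literature, with $C_0(\tilde c)$ given explicitly in terms of $\tilde c$ as in Iwaniec's fundamental lemma.
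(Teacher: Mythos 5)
The paper does not prove this lemma at all: it is quoted from the linear-sieve literature (the text says only ``Cf.~\cite{Fried} p.~732, Lemma 5''). Your sketch, by contrast, outlines the standard Rosser--Iwaniec ($\beta$-sieve) construction that underlies that reference, so you are attempting to reprove the cited black box rather than doing what the paper does. The construction of the weights, and the verification of the sign inequalities by splitting on the largest prime factor of $\gcd(n,P(\tilde z))$ and inducting, are correct and standard (note only that the support bound $m\le M$ in the even-$r$ case requires $\beta\ge 1$ and the condition at $j=r-1$; it is not quite ``immediate'').

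The one genuinely thin point is the quantitative step. Bounding the $j$-th level of the Buchstab unfolding by $V(\tilde z)\,(\tilde c\,\log M/\log\tilde z)^j/j!$ and then ``summing geometrically'' does not by itself yield the factor $e^{-\log M/\log\tilde z}$: writing $s=\log M/\log\tilde z$, the sum $\sum_{j\gtrsim s}(\tilde c s)^j/j!$ is of size roughly $e^{\tilde c s}$ times a Poisson tail probability, which is not small when $\tilde c>1$. The correct argument uses the truncation condition to confine the surviving prime tuples to a shrinking simplex, which is where the extra decay in Iwaniec's fundamental lemma actually comes from; this is exactly the ``delicate bookkeeping'' you defer. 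Since the paper itself treats the lemma as a citation, your closing remark --- that the statement can simply be quoted from the standard literature with $C_0(\tilde c)$ explicit --- is the appropriate resolution and is precisely what the paper does; if you do want a self-contained proof, the missing simplex estimate is the piece you must supply.
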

Cf. \cite{Fried} p. 732, Lemma 5.

\subsection{Result with a preliminary sieve}

In the course of this work, $\text{MT}$ denotes $\frac{V_{\sigma}(z)}{V_{\sigma_0}(z)} \hat{F}(X) \frac{(\log X)^{\nu_1+\nu_2 - 1}}{(\nu_1+\nu_2 -1)!}.$
\begin{thm}\label{thm-crible}
Under the previous hypothesis, see \ref{hypo},
\begin{equation*}
\Sigma'_{\nu_1,\nu_2}(f, X) = \frac{V_{\sigma}(z)}{V_{\sigma_0}(z)}\Sigma'_{\nu_1,\nu_2}(f_0, X) + (\rho +\tilde{\rho}) \frac{V_{\sigma}(z)}{V_{\sigma_0}(z)} \hat{F}(X) \frac{(\log X)^{\nu_1+\nu_2 - 1}}{(\nu_1+\nu_2 -1)!}
\end{equation*}
where 
\begin{equation*}
\Sigma'_{\nu_1,\nu_2}(f, X) = \sump_{n\le X} \Lambda^{(\nu_1+\nu_2)}(n) f(n) + \sump_{n\le X} \Lambda^{(\nu_1)}\star \Lambda^{(\nu_2)}(n) f(n),
\end{equation*}
and
\begin{align*}
&\vert \rho \vert \le 2 \times (149 \max(\nu_1,\nu_2))^{\nu_1+\nu_2} \\ 
 &\times \biggl(Ac^2+\left( C_0(c) e^{-\frac{\log \frac{D_0}{D}}{\log z}} +\Delta(e + e \max(\nu_1,\nu_2) \delta)^{\frac{1}{\delta}} \right) \left(\frac{c}{\delta}\right)^{2\max(\nu_1, \nu_2)} \biggr)\\
& \text{et } \vert \tilde{\rho} \vert \le 2(24\max(\nu_1,\nu_2))^{\max(\nu_1,\nu_2)}B_0.\frac{1}{\delta^{2\max(\nu_1,\nu_2)}}\left(\frac{\log \frac{X}{T}}{\log X}\right)^{\nu_1+\nu_2+\max(\nu_1,\nu_2)}.
\end{align*}
\end{thm}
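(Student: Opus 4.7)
The plan is to start from the identity derived in Section~1.4, namely
\[
\Lambda^{(\nu_1+\nu_2)} + \Lambda^{(\nu_1)} \star \Lambda^{(\nu_2)} = \sum_{\pmb{\ell}} a(\pmb{\ell})\,\mu_{\ell} \star L(\pmb{\ell}),
\]
in which the length $\ell$ never exceeds $\max(\nu_1,\nu_2)$. Subtracting from $\Sigma'_{\nu_1,\nu_2}(f,X)$ the same identity summed against $f_0$ and multiplied by $V_\sigma(z)/V_{\sigma_0}(z)$, and keeping the primed restriction to integers coprime to $P(z)$, I obtain
\[
\Sigma'_{\nu_1,\nu_2}(f,X)-\frac{V_\sigma(z)}{V_{\sigma_0}(z)}\Sigma'_{\nu_1,\nu_2}(f_0,X)
=\sum_{\pmb{\ell}} a(\pmb{\ell})\sump_{m}\mu_\ell(m)\,r'_{m,z}\bigl(L(\pmb{\ell}),X\bigr).
\]
The whole task then reduces to controlling these inner sums uniformly in $\pmb{\ell}$, and the key structural point is that each $L(\pmb{\ell})$ factors as a convolution of at most $\nu_1+\nu_2$ logarithms, exactly the form required by Lemma~\ref{lemme20}.

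The next step is to split the outer $m$-sum at the cutoff $X/T$. For $m\le X/T$ I would apply Lemma~\ref{lemme24} to the convolution weight $L(\pmb{\ell})$, choosing the auxiliary parameter $D$ so that $X(D/X)^{\nu_1+\nu_2}\asymp X/T$ and taking $r\le 2\nu_2$. This produces exactly the three contributions appearing in $\rho$: hypothesis $(H_4)$ controls the genuine remainder and yields the $Ac^2$ term after incorporating Lemma~\ref{lemme8}; the linear-sieve defect from Lemma~\ref{lemme22} gives the $C_0(c)e^{-\log(D_0/D)/\log z}$ piece; and the discrepancy term gives $\Delta(e+e r\delta)^{1/\delta}$; all of this is multiplied by $(c/\delta)^{r+k-1}\hat F(X)$. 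Summing over $\pmb{\ell}$, the coefficient bound of Lemma~\ref{majoc2} together with the $3\cdot 2^{\nu_1+\nu_2}$-type factor from Lemma~\ref{lemme20} and $\|L(\pmb{\ell})\|$ consolidate into the leading $(149\max(\nu_1,\nu_2))^{\nu_1+\nu_2}$ constant of $\rho$.

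For the complementary range $m>X/T$ I would use only trivial information. Hypothesis $(H_1)$ and the definition of $\bar f$ together give $|\bar f(mk)|\ll B_0\hat F(X)/X$, so the contribution collapses to
\[
\frac{B_0\hat F(X)}{X}\sum_{\pmb{\ell}}|a(\pmb{\ell})|\sump_{m>X/T}\tau_\ell(m)\sump_{k\le X/m}|L(\pmb{\ell})(k)|.
\]
Bounding $L(\pmb{\ell})(k)$ by Lemma~\ref{lemme17} (which cancels the $w(\pmb{\ell})$ dressing of $a(\pmb{\ell})$) and then estimating the two primed divisor sums by Lemma~\ref{lemme16} converts each into a power of $\log(X/T)/\log z=\delta^{-1}\log(X/T)/\log X$. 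The overall exponent $\nu_1+\nu_2+\max(\nu_1,\nu_2)$ attached to $\log(X/T)/\log X$ reflects the $\nu_1+\nu_2$ logarithms inside $L(\pmb{\ell})$ plus the at most $\max(\nu_1,\nu_2)$ divisor factors from $\tau_\ell$, while the total power of $\delta^{-1}$ is $2\max(\nu_1,\nu_2)$. Combining with $\sum|a(\pmb{\ell})w(\pmb{\ell})|\le 8^{\nu_1+\nu_2}\cdots$ from Lemma~\ref{majoc1} delivers the stated form of $\tilde\rho$.

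The principal obstacle is the constant bookkeeping. Each invocation of Lemmas~\ref{lemme20}, \ref{lemme24}, \ref{lemme17}, and \ref{lemme16} contributes multiplicative factors of shapes $3$, $2^k$, $(c/\delta)^{r+k-1}$, and $3^\ell$, which must be carefully collated with the coefficient bounds of Lemmas~\ref{majoc1}--\ref{majoc2} and the positivity saving furnished by $(H_3)$ in order to hit the precise constants $149$ and $24$ appearing in the numerical shapes of $\rho$ and $\tilde\rho$, rather than cruder exponentials in $\nu_1+\nu_2$.
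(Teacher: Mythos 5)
Your overall architecture (open the convolution via the generalized Selberg identity, treat a main range with Lemma \ref{lemme24} and the remainder hypotheses, treat a tail trivially, then collate coefficients with Lemmas \ref{majoc1}--\ref{majoc2}) matches the paper's, but your treatment of the tail contains two genuine errors that break the argument.

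First, the cutoff is on the wrong side. In the theorem $T$ is close to $X$ (one needs $\log(X/T)/\log X$ to be small for $\tilde\rho$ to be small), so $X/T$ is a tiny power of $X$. Splitting the M\"obius variable $m$ at $X/T$ therefore leaves a tail $m\in(X/T,X]$ covering essentially the whole range, in which the co-factor $k\le X/m$ can be as large as $T\approx X$. Your claim that Lemma \ref{lemme16} then converts both primed sums into powers of $\log(X/T)/\log z$ fails for this split: the $k$-sum only yields powers of $\log X/\log z=1/\delta$ with no saving, and the decisive factor $\left(\log(X/T)/\log X\right)^{\nu_1+\nu_2+\max(\nu_1,\nu_2)}$ in $\tilde\rho$ never appears. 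The paper does the opposite: the M\"obius variable $n$ is kept up to $T$ (this is $S_1$, where Lemma \ref{lemme24} is applied with level $T=X(D/X)^{\max(\nu_1,\nu_2)}$, $k=\max(\nu_1,\nu_2)$, $r=\ell$), and in the tail $n>T$ the roles of the two convolution variables are swapped so that the $L(\pmb{\ell})$-weighted variable is confined to $[1,X/T]$; it is precisely this confinement that produces $(\log(X/T))^{\nu_1+\nu_2+\max(\nu_1,\nu_2)}$ via Lemmas \ref{lemme16} and \ref{lemme17}.

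Second, in the tail you bound $|\bar f(mk)|\ll B_0\hat F(X)/X$. Hypothesis \eqref{H1} gives this bound only for $f_0$; the function $f$ itself is controlled only by $\Vert f\Vert_\infty\le B$ in \eqref{H8}, and in the intended application $f(n)=\Lambda(n+2)$ one has $B=\log X$ while $B_0\hat F(X)/X=1$, so the pointwise bound you invoke is not available. The paper avoids this: the tail contribution of $f$, namely $S_2(f,T,\pmb{\ell})$, is first approximated by $\frac{V_\sigma(z)}{V_{\sigma_0}(z)}S_2(f_0,T,\pmb{\ell})$ using Lemma \ref{lemme24} with $w=\tau_\ell$ and $r=1$ (this error is $R_2$ and is absorbed into $\rho$, which is why the remainder hypothesis \eqref{H4} enters $\rho$ twice), and only the model quantity $S_2(f_0,T,\pmb{\ell})$ is bounded trivially via $f_0\le B_0\hat F(X)/X$ --- that is where $B_0$ legitimately enters $\tilde\rho$. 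As written, your tail estimate is neither justified by the hypotheses nor capable of producing the stated form of $\tilde\rho$.
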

After we choose our parameters D and T and with $\nu_1\le \nu_2$, we will see that
\begin{equation*}
\vert \rho \vert \le 2 \times (149\nu_2)^{\nu_1+\nu_2} 
 \times \biggl(Ac^2+\left( C_0(c) e \delta^{3\nu_2} +\Delta4^{\frac{1}{\delta}} \right) \left(\frac{c}{\delta}\right)^{2 \nu_2} \biggr)
\end{equation*}

and
\begin{equation*}
\vert \tilde{\rho} \vert \le 2(24\nu_2)^{\nu_2}B_0 \left( 3\nu_2^2\log\frac{1}{\delta}\right)^{\nu_1+2\nu_2}\delta^{\nu_1}.
\end{equation*}

The proof sketch of Theorem \ref{thm-crible} is the following, with the same notations
\begin{equation*}
\Sigma'_{\nu_1,\nu_2}(f, X) = \sump_{\pmb{\ell}} a(\pmb{\ell})\bigl( S_1(f, T, \pmb{\ell}) + \mathcal{O}^*(S_2(f, T, \pmb{\ell}) )\bigr)
\end{equation*}
we will use the succession of approximations
\begin{equation}
S_1(f, T, \pmb{\ell})= \sump_{n\le T}   \mu_{\ell}(n)\sump_{m\le X/n}  L(\pmb{\ell})(m) f(mn)
\end{equation}
by $\frac{V_{\sigma}(z)}{V_{\sigma_0}(z)}S_1(f_0, T, \pmb{\ell})$
and
\begin{equation}
S_2(f, T, \pmb{\ell})= \sump_{m\le X/T} L(\pmb{\ell})(m) \sump_{T\le n\le X/m} \tau_{\ell}(n)f(mn)
\end{equation}
by $\frac{V_{\sigma}(z)}{V_{\sigma_0}(z)}S_2(f_0, T, \pmb{\ell})$.

Indeed
\begin{align*}
\sump_{n \le X} \bigl(\Lambda^{(\nu_1+\nu_2)} + \Lambda^{(\nu_1)} \star\Lambda^{(\nu_2)}\bigr)(n) f(n) = \sump_{n\le X}\sump_{\pmb{\ell}} a(\pmb{\ell})\mu_{\ell} \star L(\pmb{\ell})(n) f(n)\\
= \sump_{\pmb{\ell}} a(\pmb{\ell}) \sump_{n\le X} \sump_{d \mid n} \mu_{\ell}(d)  L(\pmb{\ell})(\frac{n}{d}) f(n)\\
= \sump_{\pmb{\ell}} a(\pmb{\ell}) \sump_{d\le X}\mu_{\ell}(d) \sump_{m\le \frac{X}{d}}   L(\pmb{\ell})(m) f(dm).\\
\end{align*}
so
\begin{align*}
\sump_{n\le X}\mu_{\ell}(n) \sump_{m\le \frac{X}{n}}   L(\pmb{\ell})(m) f(mn) =& \sump_{n\le T}\mu_{\ell}(n) \sump_{m\le \frac{X}{n}}   L(\pmb{\ell})(m) f(mn)\\
&\qquad\qquad\quad + \sump_{T\le n\le X}\sump_{m\le \frac{X}{n}}\mu_{\ell}(n)  L(\pmb{\ell})(m) f(mn)\\
=& S_1(f, T, \pmb{\ell}) + \sump_{T\le n\le X}\sump_{m\le \frac{X}{n}}\mu_{\ell}(n)  L(\pmb{\ell})(m) f(mn).
\end{align*}
with
\begin{equation*}
\left| \sump_{T\le n\le X}\sump_{m\le \frac{X}{n}}\mu_{\ell}(n)  L(\pmb{\ell})(m) f(mn) \right| \le \sump_{T\le n\le X}\sump_{m\le \frac{X}{n}}\tau_{\ell}(n)  L(\pmb{\ell})(m) f(mn) = S_2(f, T, \pmb{\ell}).
\end{equation*}

We will find an upper bound for $S_2(f_0, T, \pmb{\ell})$, and so our estimation will be
\begin{align*}
\Sigma'_{\nu_1,\nu_2}(f, X) &= \sum_{\pmb{\ell}} a(\pmb{\ell})\bigl( S_1(f, T, \pmb{\ell}) + \mathcal{O}^*(S_2(f, T, \pmb{\ell}) )\bigr)\\
&= \sum_{\pmb{\ell}} a(\pmb{\ell})\frac{V_{\sigma}(z)}{V_{\sigma_0}(z)}S_1(f_0, T, \pmb{\ell}) \\
&\qquad\qquad+ R_1 + \sum_{\pmb{\ell}} a(\pmb{\ell})\mathcal{O}^*\left(\frac{V_{\sigma}(z)}{V_{\sigma_0}(z)}S_2(f_0, T, \pmb{\ell})\right) + R_2.
\end{align*}
As we approximate $S_1(f, T, \pmb{\ell})$ by $S_1(f_0, T, \pmb{\ell})$, $R_1$ is the remainder term from this approximation. And when we approximate $S_2(f, T, \pmb{\ell})$ by $S_2(f_0, T, \pmb{\ell})$,  $R_2$ is the corresponding remainder term.

Meanwhile
\begin{equation*}
\Sigma'_{\nu_1,\nu_2}(f_0, X) = \sum_{\pmb{\ell}} a(\pmb{\ell})(S_1(f_0, T, \pmb{\ell}) + \mathcal{O}^*(S_2(f_0, T, \pmb{\ell}) )
\end{equation*}
so
\begin{equation*}
\sum_{\pmb{\ell}} a(\pmb{\ell})\frac{V_{\sigma}(z)}{V_{\sigma_0}(z)}S_1(f_0, T, \pmb{\ell})= \frac{V_{\sigma}(z)}{V_{\sigma_0}(z)}\Sigma'_{\nu_1,\nu_2}(f_0, X) + \sum_{\pmb{\ell}} a(\pmb{\ell})\mathcal{O}^*\left(\frac{V_{\sigma}(z)}{V_{\sigma_0}(z)}S_2(f_0, T, \pmb{\ell})\right) .
\end{equation*}
We bound in $\mathcal{O}^*$ in absolute values so
\begin{align*}
\Sigma'_{\nu_1,\nu_2}(f, X) &= \frac{V_{\sigma}(z)}{V_{\sigma_0}(z)}\Sigma'_{\nu_1,\nu_2}(f_0, X) + 2\sum_{\pmb{\ell}} a(\pmb{\ell})\mathcal{O}^* \left(\frac{V_{\sigma}(z)}{V_{\sigma_0}(z)}S_2(f_0, T, \pmb{\ell})\right)\\
&\qquad\qquad\qquad\qquad + R_1 + R_2\\
&=\frac{V_{\sigma}(z)}{V_{\sigma_0}(z)}\Sigma'_{\nu_1,\nu_2}(f_0, X) +(\rho+\tilde{\rho})\text{MT}
\end{align*}

where $\rho$ is such that
\begin{equation}
R_1 + R_2 = \rho \times \text{MT}
\end{equation}
and
\begin{equation}
\tilde{\rho} =\mathcal{O}\left( \frac{2\sum_{\pmb{\ell}} a(\pmb{\ell})\frac{V_{\sigma}(z)}{V_{\sigma_0}(z)}S_2(f_0, T, \pmb{\ell})}{\text{MT}}\right),
\end{equation}
which we will estimate.

\subsubsection{Bounding the first error term $R_1$}
We bound $\vert S_1(f, T, \pmb{\ell}) - \frac{V_{\sigma}(z)}{V_{\sigma_0}(z)}S_1(f_0, T, \pmb{\ell})\vert$ where 
\begin{equation}
S_1(f, T, \pmb{\ell})= \sump_{n\le T}  \mu_{\ell}(n)\sump_{m\le X/n} L(\pmb{\ell})(m) f(mn).
\end{equation}
We use Lemma \ref{lemme24} with $\ell\le \max(\nu_1,\nu_2)$, and hypothesis $(H_2)$ and \eqref{H4}. With $T = X(D/X)^{\max(\nu_1,\nu_2)}$ so $k= \max(\nu_1,\nu_2)$ and noticing that $\vert \vert L(\pmb{\ell})\vert \vert \le (\log X)^{\nu_1 +\nu_2}$,
we find the following upper bound:
\begin{multline*}
3.2^{\max(\nu_1,\nu_2) +1} \hat{F}(X) (\log X)^{\nu_1+\nu_2} \\
\times \biggl( \frac{A}{2 \log X}+\left( C_0(c) e^{-\frac{\log \frac{D_0}{D}}{\log z}} +\Delta(e + e\ell \delta)^{\frac{1}{\delta}} \right) V_{\sigma}(z) \left( \frac{c}{\delta}\right) ^{\ell +\max(\nu_1,\nu_2) -1} \biggr).
\end{multline*}
Indeed, we are still using Ramar\'e 's notations of \cite{Ram}, and we use Lemma \ref{lemme24} to give an upper bound for $R'_z(L(\pmb{\ell}), \bar{f}, T, \ell)$. 
where $r'_{n,z}(L(\pmb{\ell}),y)=\sum_{m\le X/n}L(\pmb{\ell})\bar{f}(mn)$.

Summing over $\pmb{\ell}$ and multiplying by $a(\pmb{\ell})$ (we recall that $\ell \le \max(\nu_1,\nu_2)$), we then use Lemme \ref{majoc1}. We find that $R_1$ is no more than
\begin{align*}
&\qquad 3\times 2^{\max(\nu_1, \nu_2)}\times 8^{\nu_1+\nu_2} \frac{\max(\nu_1,\nu_2)^{\min(\nu_1,\nu_2)+1}}{\min(\nu_1,\nu_2)^{\min(\nu_1,\nu_2)-1}}(\log X)^{\nu_1+\nu_2-1}\frac{V_{\sigma}(z)}{V_{\sigma_0}(z)}\hat{F}(X) \\
&\times \biggl( \frac{A}{2 \frac{V_{\sigma}(z)}{V_{\sigma_0}(z)}}+\left( C_0(c) e^{-\frac{\log \frac{D_0}{D}}{\log z}} +\Delta(e + e \max(\nu_1+\nu_2) \delta)^{\frac{1}{\delta}} \right) V_{\sigma_0}(z)\log X \left( \frac{c}{\delta}\right) ^{2\max(\nu_1,\nu_2) -1} \biggr)\\
&\qquad = \text{MT} (\nu_1+\nu_2-1)! \times 3\times 2^{\max(\nu_1, \nu_2)}\times 8^{\nu_1+\nu_2} \frac{\max(\nu_1,\nu_2)^{\min(\nu_1,\nu_2)+1}}{\min(\nu_1,\nu_2)^{\min(\nu_1,\nu_2)-1}} \times \biggl( ... \biggr)
\end{align*}
(what is inside the brackets remains the same). Now from Lemma \ref{lemme8},
\begin{equation*}
\frac{A}{2 \frac{V_{\sigma}(z)}{V_{\sigma_0}(z)}} \le Ac^2.
\end{equation*}
From $(H_2)$, we have $V_{\sigma_0}(z)< \frac{c}{\delta/\log X}$, so
\begin{equation*}
V_{\sigma_0}(z)\log X \left( \frac{c}{\delta}\right) ^{2\max(\nu_1,\nu_2) -1} \le \frac{c^{2\max(\nu_1, \nu_2)}}{\delta^{2\max(\nu_1, \nu_2)}}.
\end{equation*}
Then
\begin{equation}
(\nu_1+\nu_2-1)! \times 3\times 2^{\max(\nu_1, \nu_2)}\times 8^{\nu_1+\nu_2} \frac{\max(\nu_1,\nu_2)^{\min(\nu_1,\nu_2)+1}}{\min(\nu_1,\nu_2)^{\min(\nu_1,\nu_2)-1}}
\end{equation}
is no more than $(149 \max(\nu_1,\nu_2))^{\nu_1+\nu_2}$.
\begin{proof}
Indeed, we use Lemma \ref{lemme9} which tells us $(\nu_1+\nu_2)!$ is lower than 
\begin{equation*}
\sqrt{2\pi (\nu_1+\nu_2)}\left( \frac{\nu_1+\nu_2}{e}\right)^{\nu_1+\nu_2} e^{\frac{1}{12(\nu_1+\nu_2)}},
\end{equation*}
and take the logarithm.

We use $1$ for a lower bound of $\min(\nu_1,\nu_2)$, and $2$ for $\nu_1+\nu_2$ and $\max(\nu_1,\nu_2)$ is less or equal to $\nu_1+\nu_2$, we also use $\log 2 + \log(\max(\nu_1,\nu_2))$  for an upper bound of $\log(\nu_1+\nu_2)$ and so the logarithm of this whole expression is no more than
 \begin{multline*}
(\nu_1+\nu_2)\left( \log(\frac{8}{e})+2\log(2)+\frac{\log \pi+ \frac{1}{24}}{2}\right) + (\nu_1+\nu_2)\log(\max(\nu_1,\nu_2)) \\
\le (\nu_1+\nu_2)(5+\log(\max(\nu_1,\nu_2)).
 \end{multline*}
 
\end{proof}
 And so
 \begin{multline*}
R_1\le \text{MT} \times (149 \max(\nu_1,\nu_2))^{\nu_1+\nu_2} \\ 
 \times \biggl(Ac^2+\left( C_0(c) e^{-\frac{\log \frac{D_0}{D}}{\log z}} +\Delta(e + e \max(\nu_1,\nu_2) \delta)^{\frac{1}{\delta}} \right) (c/\delta)^{2\max(\nu_1, \nu_2)} \biggr).
 \end{multline*}

\subsubsection{Bounding the second error term $R_2$}
We bound $\vert S_2(f, T, \pmb{\ell}) - \frac{V_{\sigma}(z)}{V_{\sigma_0}(z)}S_2(f_0, T, \pmb{\ell})\vert$ i.e. $\vert S_2(\bar{f}, T, \pmb{\ell})\vert$ where 
\begin{equation}
S_2(\bar{f}, T, \pmb{\ell})= \sump_{m\le \frac{X}{T}} L(\pmb{\ell})(m)\sump_{T\le n\le X/m}\tau_{\ell}(n) \bar{f}(mn).
\end{equation}
In order to do so, we use the fact $L(\pmb{\ell})(m)$ is less than $\vert \vert L(\pmb{\ell})\vert \vert \le (\log \frac{X}{T})^{\nu_1 +\nu_2}$ and we use again Lemma \ref{lemme24} but here with $r=1$, $w=\tau_{\ell}$ and $k=\max(\nu_1,\nu_2)$.

Lemma \ref{lemme24} gives us a upper bound for $R'_z(\tau_{\ell}, \bar{f}, T, 1)$ assuming $T\le X\left(\frac{D}{X}\right)^{\max(\nu_1,\nu_2)}$. We find that
\begin{multline*}
\sump_{m\le \frac{X}{T}}\left| \sump_{T\le n\le X/m} \tau_{\ell}(n) \bar{f}(mn)\right| \\
\le 3 \times 2^{\max(\nu_1,\nu_2)} \Biggl( R(f,D_0,\max(\nu_1,\nu_2)+1) + \frac{V_{\sigma}(z)}{V_{\sigma_0}(z)}R(f_0,D_0,\max(\nu_1,\nu_2)+1)\\
+\Bigl( 2C_0(c)e^{-\frac{\log \frac{D_0}{D}}{\log z}}+ 2 \Delta(e+e\delta)^{\frac{1}{\delta}}\Bigr) V_{\sigma}(z) (\frac{c}{\delta})^{\max(\nu_1,\nu_2)} \hat{F}(X)\Biggr).
\end{multline*}
So, using \eqref{H4} and our upper bound for $\vert \vert L(\pmb{\ell})\vert \vert$ we get
\begin{multline*}
\vert S_2(\bar{f}, T, \pmb{\ell})\vert
 \le 3 \times 2^{\max(\nu_1,\nu_2)+1}\left( \log \frac{X}{T}\right)^{\nu_1+\nu_2}  \\
 \left( \frac{A\hat{F}(X)}{2\log X}
+( C_0(c)e^{-\frac{\log \frac{D_0}{D}}{\log z}}+  \Delta(e+e\delta)^{\frac{1}{\delta}}) V_{\sigma}(z) (\frac{c}{\delta})^{\max(\nu_1,\nu_2)} \hat{F}(X)\right)\\
\le 3 \times 2^{\max(\nu_1,\nu_2)+1}\hat{F}(X)\left( \log \frac{X}{T}\right)^{\nu_1+\nu_2} \frac{V_{\sigma}(z)}{V_{\sigma_0}(z)} \\
 \left( \frac{A}{2\log X} \times \frac{V_{\sigma_0}(z)}{V_{\sigma}(z)}
+( C_0(c)e^{-\frac{\log \frac{D_0}{D}}{\log z}}+  \Delta(e+e\delta)^{\frac{1}{\delta}}) V_{\sigma_0}(z) (\frac{c}{\delta})^{\max(\nu_1,\nu_2)} \right).
\end{multline*}
From Lemma \ref{lemme8}, $\frac{V_{\sigma}(z)}{V_{\sigma_0}(z)}$ is less or equal to $2 c^2$  and $V_{\sigma_0}(z) \le \frac{c}{\log z}$ cf. $(H_2)$.
We multiply by $a(\pmb{\ell})$ and we sum over $\ell$ using Lemma \ref{lemme7} to get an upper bound.
So our remainder term is no more than
\begin{multline*}
3 \times 2^{\max(\nu_1,\nu_2)+1}\times 8^{\nu_1+\nu_2} \frac{\max(\nu_1,\nu_2)^{\min(\nu_1,\nu_2)+1}}{\min(\nu_1,\nu_2)^{\min(\nu_1,\nu_2)-1}}\hat{F}(X)\left( \log \frac{X}{T}\right)^{\nu_1+\nu_2-1} \frac{V_{\sigma}(z)}{V_{\sigma_0}(z)} \\
 \left( \frac{Ac^2 \log \frac{X}{T}}{\log X}
+( C_0(c)e^{-\frac{\log \frac{D_0}{D}}{\log z}}+  \Delta(e+e\delta)^{\frac{1}{\delta}}) V_{\sigma_0}(z) (\frac{c}{\delta})^{\max(\nu_1,\nu_2)}c\frac{\log \frac{X}{T}}{\log z} \right).
\end{multline*}
This identity can be related to $R_1$'s upper bound. Indeed we have $\left( \log \frac{X}{T}\right)^{\nu_1+\nu_2-1}$ instead of $(\log X)^{\nu_1+\nu_2-1}$, and inside the brackets we can use a few boundings:
\begin{equation*}
\frac{Ac^2 \log \frac{X}{T}}{\log X} \le Ac^2
\end{equation*}
\begin{equation*}
(e+e\delta)^{\frac{1}{\delta}} \le (e+e\max(\nu_1,\nu_2)\delta)^{\frac{1}{\delta}}
\end{equation*}
because $1/\delta >1$.
\begin{equation*}
(\frac{c}{\delta})^{\max(\nu_1,\nu_2)}c\frac{\log \frac{X}{T}}{\log z} \le (\frac{c}{\delta})^{\max(\nu_1,\nu_2)}c\frac{\log X}{2\log z}
\le (\frac{c}{\delta})^{\max(\nu_1,\nu_2)+1} \le (\frac{c}{\delta})^{2\max(\nu_1,\nu_2)+1}
\end{equation*}
because $T^2 \ge X$.

And so $R_2$'s upper bound is less than $R_1$'s upper bound with the factor (multiplied by)
\begin{equation}
2\left(\frac{\log \frac{X}{T}}{\log X}\right)^{\nu_1+\nu_2-1} \le 2 \left(\frac{1}{2}\right)^{\nu_1+\nu_2-1} \le 1.
\end{equation}
\subsubsection{Bounding $\sum_{\pmb{\ell}} a(\pmb{\ell})\frac{V_{\sigma}(z)}{V_{\sigma_0}(z)}S_2(f_0, T, \pmb{\ell})$}
From $(H_1)$, $f_0$ is less than or equal to $B_0\frac{\hat{F}(X)}{X}$ so we just need to find an upper bound for $S_2(\mathbbm{1}, T, \pmb{\ell})$.
We use Lemma \ref{lemme16} and the fact that $X/m\ge T \ge \sqrt{X}$.
\begin{align}
S_2(\mathbbm{1}, T, \pmb{\ell}) &= \sump_{m\le X/T}L(\pmb{\ell})(m)\sump_{T < n\le X/m}\tau_\ell(n)\\
\notag &\le X\frac{3^\ell(\log \frac{X}{T})^{\ell-1}}{(\log z)^\ell}\sump_{m\le X/T}\frac{L(\pmb{\ell})(m)}{m}\\
\notag &\le X\frac{3^\ell(\log \sqrt{X})^{\ell-1}}{(\log z)^\ell}\sump_{m\le X/T}\frac{L(\pmb{\ell})(m)}{m}.
\end{align}
Using Lemma \ref{lemme17} and recalling that $\delta=\frac{\log z}{\log X}$ we get
\begin{align}
S_2(\mathbbm{1}, T, \pmb{\ell}) &\le \frac{2X}{\log X}\left(\frac{3}{2\delta}\right)^\ell \sump_{m\le \frac{X}{T}}\frac{\tau_\ell(m)}{m} \left( \frac{e \log \frac{X}{T}}{\nu_1+\nu_2}\right)^{\nu_1+\nu_2} \times \frac{1}{w(\pmb{\ell})}\\
\notag &\le \frac{2X}{\log X}\left(\frac{3}{2\delta}\right)^{\max(\nu_1,\nu_2)} \left( \frac{e \log \frac{X}{T}}{\nu_1+\nu_2}\right)^{\nu_1+\nu_2} \times \frac{1}{w(\pmb{\ell})}\left(1.04\frac{\log \frac{X}{T}}{\log z}\right)^{\max(\nu_1,\nu_2)}.
\end{align}
In the last line we used Lemma \ref{lemme16} and the fact that $\ell \le \max(\nu_1,\nu_2)$. 
As  $\log z=\delta\log X$ and $\nu_1+\nu_2\le 2\max(\nu_1,\nu_2)$, we have
\begin{multline*}
S_2(\mathbbm{1}, T, \pmb{\ell})\\
\le  \frac{2X}{\log X}\left(\frac{12}{\delta^2}\right)^{\max(\nu_1,\nu_2)} . \frac{1}{(\nu_1+\nu_2)^{\nu_1+\nu_2}}. \frac{1}{w(\pmb{\ell})}.\left(\log \frac{X}{T}\right)^{\nu_1+\nu_2}.\left(\frac{\log \frac{X}{T}}{\log X}\right)^{\max(\nu_1,\nu_2)}
\end{multline*}
and we find that
\begin{multline*}
w(\pmb{\ell})S_2(f_0, T, \pmb{\ell})\\
\le  \frac{2X}{\log X}\left(\frac{12}{\delta^2}\right)^{\max(\nu_1,\nu_2)}  \frac{1}{(\nu_1+\nu_2)^{\nu_1+\nu_2}}\left(\log \frac{X}{T}\right)^{\nu_1+\nu_2}\left(\frac{\log \frac{X}{T}}{\log X}\right)^{\max(\nu_1,\nu_2)}B_0\frac{\hat{F}(X)}{X}.
\end{multline*}

At last we sum over $\pmb{\ell}$, multiplying by $c(\pmb{\ell})\times \frac{V_{\sigma_0}(z)}{V_\sigma(z)}$ and we use the bounding in Lemma \ref{majoc2}:
\begin{align*}
&\sum_{\pmb{\ell}} a(\pmb{\ell})\frac{V_{\sigma}(z)}{V_{\sigma_0}(z)}S_2(f_0, T, \pmb{\ell})\\
&\le \frac{2X}{\log X}\left(\frac{12}{\delta^2}\right)^{\max(\nu_1,\nu_2)}  \frac{1}{(\nu_1+\nu_2)^{\nu_1+\nu_2}}\left(\log \frac{X}{T}\right)^{\nu_1+\nu_2}\left(\frac{\log \frac{X}{T}}{\log X}\right)^{\max(\nu_1,\nu_2)}\\
&\qquad \qquad \times B_0\frac{\hat{F}(X)}{X} 8^{\nu_1+\nu_2} \frac{\max(\nu_1,\nu_2)^{\min(\nu_1,\nu_2)+1}}{\min(\nu_1,\nu_2)^{\min(\nu_1,\nu_2)-1}}
\frac{V_{\sigma_0}(z)}{V_\sigma(z)}\\
&\le 2\left(\frac{12}{\delta^2}\right)^{\max(\nu_1,\nu_2)}  \frac{1}{(\nu_1+\nu_2)^{\nu_1+\nu_2}}\left(\frac{\log \frac{X}{T}}{\log X}\right)^{\nu_1+\nu_2+\max(\nu_1,\nu_2)}\\
&\qquad \qquad \times 8^{\nu_1+\nu_2} \frac{\max(\nu_1,\nu_2)^{\min(\nu_1,\nu_2)+1}}{\min(\nu_1,\nu_2)^{\min(\nu_1,\nu_2)-1}}B_0. \text{MT}.(\nu_1+\nu_2-1)!
\end{align*}

Here, $X$ has been simplified, and by reintroducing MT we divided by $(\log X)^{\nu_1+\nu_2 - 1}$. A $\log X$ has been simplified.
Finally,
\begin{equation}
\sum_{\pmb{\ell}} a(\pmb{\ell})\frac{V_{\sigma}(z)}{V_{\sigma_0}(z)}S_2(f_0, T, \pmb{\ell})\le \text{Cst}. B_0.\text{MT}\frac{1}{\delta^{2\max(\nu_1,\nu_2)}}\left(\frac{\log \frac{X}{T}}{\log X}\right)^{\nu_1+\nu_2+\max(\nu_1,\nu_2)}
\end{equation}
where the constant is no more than $(24\max(\nu_1,\nu_2))^{\max(\nu_1,\nu_2)}$.

Indeed, using Lemma \ref{lemme9} to bound $(\nu_1+\nu_2)!$, we find that
\begin{multline*}
\text{Cst} \le 2 \times 12^{\max(\nu_1,\nu_2)}\times \left(\frac{8}{e}\right)^{\nu_1+\nu_2}\frac{\max(\nu_1,\nu_2)^{\min(\nu_1,\nu_2)+1}}{\min(\nu_1,\nu_2)^{\min(\nu_1,\nu_2)-1}}\times \frac{\sqrt{2\pi}}{(\nu_1+\nu_2)^\frac{1}{2}}e^{\frac{1}{12(\nu_1+\nu_2)}}.\\
\end{multline*}
We take the logarithm, $\log(\text{Cst})$ is no more than
\begin{align*}
&\log 2 + \max(\nu_1,\nu_2)\log 12 + (\nu_1+\nu_2)\log\frac{8}{e}+ (\min(\nu_1,\nu_2)+1)\log(\max(\nu_1,\nu_2)) \\
&- (\min(\nu_1,\nu_2)-1)\log(\min(\nu_1,\nu_2)) +\frac{1}{2}\log(2\pi) - \frac{1}{2}\log(\nu_1+\nu_2)+\frac{1}{12(\nu_1+\nu_2)} \\
& \le \log 2 +\frac{1}{2}\log(\pi)+\frac{1}{24}+\max(\nu_1,\nu_2)\log \left(\Bigl(\frac{8}{e}\Bigr)^2 \max(\nu_1,\nu_2)\right) \\
& \le \max(\nu_1,\nu_2)\biggl(1 + \log\left(\biggl(\frac{8}{e}\biggr)^2\max(\nu_1,\nu_2)\right)\biggr),
\end{align*}
as $1\le \min(\nu_1,\nu_2)$,  $2\le \nu_1+\nu_2 \le 2\max(\nu_1,\nu_2)$ and $\frac{ \log 2 +\frac{1}{2}\log(\pi) + \frac{1}{24}}{\max(\nu_1,\nu_2)}\le 1$.


\subsubsection{Choice of the parameters $D$ and $T$}
The delicate matter here is to simultaneously minimize $\vert \rho\vert$ et $\vert \tilde{\rho}\vert$.
First, let us recall that
\begin{eqnarray}
\left\lbrace
\begin{array}{cc}
\log z = \delta \log X\\
\log D_0 = (1 - \delta) \log X\\
\end{array}\right.
\end{eqnarray}
Also, we supposed that
\begin{eqnarray*}
X \ge T \ge \sqrt{X} \ge z \text{ et } T = X\left(\frac{D}{X}\right)^{\max(\nu_1,\nu_2)}.
\end{eqnarray*}
Let us take $\eta$ such that
\begin{equation*}
\log T = (1-\eta\max(\nu_1,\nu_2))\log X
\end{equation*}
so $\log D = (1-\eta)\log X$. Minimizing $\vert \rho\vert$, where 
\begin{align*}
&\vert \rho \vert \le 2 \times (149 \max(\nu_1,\nu_2))^{\nu_1+\nu_2} \\ 
 &\times \biggl(Ac^2+\left( C_0(c) e^{-\frac{\log \frac{D_0}{D}}{\log z}} +\Delta(e + e \max(\nu_1,\nu_2) \delta)^{\frac{1}{\delta}} \right) \left(\frac{c}{\delta}\right)^{2\max(\nu_1, \nu_2)} \biggr),\\
 \end{align*}
$\Delta$ being typically very small if not null (so being $A$), is indeed minimizing
\begin{equation}
\displaystyle e^{-\frac{\log \frac{D_0}{D}}{\log z}}\delta^{-2\max(\nu_1,\nu_2)} = e^{-\frac{\eta -\delta}{\delta}}\delta^{-2\max(\nu_1,\nu_2)}.
\end{equation}
At the same time, we want to minimize $\vert \tilde{\rho}\vert$ where
\begin{align*}
&\vert \tilde{\rho} \vert \le (24\max(\nu_1,\nu_2))^{\max(\nu_1,\nu_2)}B_0.\frac{1}{\delta^{2\max(\nu_1,\nu_2)}}\left(\frac{\log \frac{X}{T}}{\log X}\right)^{\nu_1+\nu_2+\max(\nu_1,\nu_2)}.
\end{align*}
so we want to minimize
\begin{align*}
&\left(\frac{\log \frac{X}{T}}{\log X}\right)^{\nu_1+\nu_2+\max(\nu_1,\nu_2)}\delta^{-2\max(\nu_1,\nu_2)} = (\eta\max(\nu_1,\nu_2))^{\nu_1+\nu_2+\max(\nu_1,\nu_2)} \delta^{-2\max(\nu_1,\nu_2)}.
\end{align*}

We choose
\begin{equation}
\eta = 3\max(\nu_1,\nu_2)\delta\log\frac{1}{\delta},
\end{equation}
so
\begin{equation*}
e^{-\frac{\eta -\delta}{\delta}}\delta^{-2\max(\nu_1,\nu_2)}=\exp(-3\max(\nu_1,\nu_2)\log\frac{1}{\delta}+1)\delta^{-2\max(\nu_1,\nu_2)}=e\delta^{\max(\nu_1,\nu_2)}
\end{equation*}
while
\begin{align*}
&(\eta\max(\nu_1,\nu_2))^{\nu_1+\nu_2+\max(\nu_1,\nu_2)} \delta^{-2\max(\nu_1,\nu_2)}\\
&\qquad \qquad =\left( 3\max(\nu_1,\nu_2)^2\delta\log\frac{1}{\delta}\right)^{\nu_1+\nu_2+\max(\nu_1,\nu_2)} \delta^{-2\max(\nu_1,\nu_2)}\\
&\qquad \qquad = \left( 3\max(\nu_1,\nu_2)^2\log\frac{1}{\delta}\right)^{\nu_1+\nu_2+\max(\nu_1,\nu_2)}\delta^{\min(\nu_1,\nu_2)}.
\end{align*}
And so
\begin{multline*}
\vert \rho \vert \le 2 \times (149\max(\nu_1,\nu_2))^{\nu_1+\nu_2} \\
 \times \biggl(Ac^2+\left( C_0(c) e \delta^{3\max(\nu_1,\nu_2)} +\Delta (e + e \max(\nu_1,\nu_2) \delta)^{\frac{1}{\delta}} \right) \left(\frac{c}{\delta}\right)^{2\max(\nu_1, \nu_2)} \biggr)
\end{multline*}
and
\begin{equation*}
\vert \tilde{\rho} \vert \le 2\times(24\max(\nu_1,\nu_2))^{\max(\nu_1,\nu_2)}B_0 \left(3\max(\nu_1,\nu_2)^2\log \frac{1}{\delta}\right)^{\nu_1+\nu_2+\max(\nu_1,\nu_2)}\delta^{\min(\nu_1,\nu_2)}.
\end{equation*}

\subsection{Removal of the preliminary sieving}
We give an upper bound for all that was not counted in the sieved sums, i.e. we sum over integers having at least a prime factor $\le z$, using the first expression which is
\begin{equation*}
\sum_{\substack{n \le X \\ (n, P(z)) \neq 1}}\Lambda^{(\nu_1+\nu_2)}(n) g(n) + \sum_{\substack{n \le X \\ (n, P(z)) \neq 1}}\Lambda^{(\nu_1)} \star \Lambda^{(\nu_2)}(n) g(n)
\end{equation*}
where we recall that $\Lambda^{(\nu)}(n)=\frac{\Lambda(n)(\log n)^{\nu -1}}{(\nu - 1)!}$, and when $g$ is $f$ or $f_0$.

We have $\delta \log X \ge \nu_1 + \nu_2 +1$ and $z \ge e^{\nu_1+\nu_2 +1}$.

The first sum contains few summands and we use \eqref{H8}  to get an upper bound for $f$ or $f_0$. For the second sum, we proceed in three steps, unsing the fact that $n$ has at most two prime factors. We write $n = p^h m$ with $p\le z$ and we first take $n\le X^{1/2}$, then we take $m$ a prime power $\le X^{1/2}$ and at last we take $n= p^h m> X^{1/2} $ where $m$ is a prime $>X^{1/2}$ (and so $p^h \le X^{1/2}$). In the first two cases, we also use the upper bound for $f$ or $f_0$ in \eqref{H8}. The third case is more delicate to handle.

We shall use the following results of \cite{Ros} (p. 1175) (for $X> 1$):
\begin{equation}\label{majopi}
\sum_{p\le X} \frac{\log p}{p} < \log X \qquad \text{et}\qquad   \pi (X) \le 1.26\frac{X}{\log X}.
\end{equation}
At last, we recall that MT $= \hat{F}(X)\frac{(\log X)^{\nu_1+\nu_2-1}}{(\nu_1+\nu_2-1)!}\frac{V_{\sigma}(z)}{V_{\sigma_0}(z)}$.

Note: from now we consider that $\nu_1 \le \nu_2$ so $\nu_1 = \min(\nu_1, \nu_2)$ and $\nu_2= \max(\nu_1, \nu_2)$ so our identities are easier to write (and read). All our calculations remain accurate for any $\nu_1$ and $\nu_2$ but the final upper bound is more obvious this way.

\subsubsection{First term: an upper bound for $\sum \Lambda^{(\nu_1+\nu_2)}(n) f(n)$}\label{7.1}
The only non-zero summands here are prime powers $n=p^h \le X$ and thus $\Lambda(n)=\log p$.
\begin{align*}
\sum_{\substack{n \le X \\ (n, P(z)) \neq 1}}\Lambda(n)(\log n)^{\nu_1+\nu_2-1} &=& \sum_{p\le z}\sum_{h\le \frac{\log X}{\log p}} (\log p)^{\nu_1+\nu_2} h^{\nu_1+\nu_2-1}\\
 &\le&  \sum_{p\le z}(\log p)^{\nu_1+\nu_2} \left(\frac{\log X}{\log p}\right)^{\nu_1+\nu_2}
&\le& 1.26 (\log X)^{\nu_1+\nu_2}\frac{z}{\log z}.
\end{align*}
We divide by $(\nu_1+\nu_2-1)!$ and use Lemma \ref{lemme8}  $\frac{V_{\sigma}(z)}{V_{\sigma_0}(z)}\ge \frac{1}{2 c^2}$, we use the upper bound $B$ for $f$ in \eqref{H8} and so
\begin{align}
\notag \sum_{\substack{n \le X \\ (n, P(z)) \neq 1}}\Lambda^{(\nu_1+\nu_2)}(n) f(n) &\le \text{MT}. B. c^2 \times 2.52 \frac{z}{\hat{F}(X)}\frac{\log X}{\log z}\\
&\le 2.52 B c^2 \text{MT} \frac{\delta^{\nu_2}}{\sqrt{X}}.
\end{align}
We have used \eqref{H8} to give a lower bound for $\hat{F}(X)$. 
Note that we can deduce the upper bound:
\begin{equation}
\sum_{\substack{n \le X \\ (n, P(z)) \neq 1}}\Lambda^{(\nu_1+\nu_2)}(n) f(n)  \le \frac{B}{\log X} c^2 \text{MT} \delta^{\nu_2}.
\end{equation}
\subsubsection{Upper bound for $\sum\Lambda^{(\nu_1)} \star \Lambda^{(\nu_2)}(n) f(n)$ pour $n\le X^{1/2}$}
In this case $n=p^h m$ such that $p^h$ and $m$ are $\le X^{1/2}$. 
We only calculate the convolution product over $p \le z$. It is equal to 
\begin{align*}
\sum_{\substack{n \le X^{1/2} \\ (n, P(z)) \neq 1}}\Lambda (\log)^{\nu_1-1}\star \Lambda (\log)^{\nu_2-1}(n)  &=& \sum_{p\le z}\sum_{h\le \frac{\log X}{2\log p}} (\log p)^{\nu_1} h^{\nu_1-1} \sum_{m\le \frac{\sqrt{X}}{p^h}}\Lambda(m)(\log m)^{\nu_2-1} \\
 &+& \sum_{p\le z}\sum_{h\le \frac{\log X}{2\log p}} (\log p)^{\nu_2} h^{\nu_2-1} \sum_{m\le \frac{\sqrt{X}}{p^h}}\Lambda(m)(\log m)^{\nu_1-1}.
\end{align*}
We call the first term of this sum $\sumun$ and give an upper bound for it, which we will also use for the second term, swapping $\nu_1$ and $\nu_2$.
\begin{align}
\sumun =& \sum_{p\le z}\sum_{h\le \frac{\log X}{2\log p}} (\log p)^{\nu_1} h^{\nu_1-1} \sum_{m\le \frac{\sqrt{X}}{p^h}}\Lambda(m)(\log m)^{\nu_2-1} \\
\notag \le& \sum_{p\le z}\sum_{h\le \frac{\log X}{2\log p}} (\log p)^{\nu_1} h^{\nu_1-1} \pi\left(\frac{\sqrt{X}}{p^h}\right)\left(\log \frac{\sqrt{X}}{p^h}\right)^{\nu_2}.
\end{align}
Using the upper bound (\ref{majopi}) for $\pi\left(\frac{\sqrt{X}}{p^h}\right)$ we get
\begin{align*}
\sumun  \le 1.26 \left(\frac{1}{2}\log X\right)^{\nu_2-1}\sqrt{X}\sum_{p\le z}\frac{(\log p)^{\nu_1}}{p}\sum_{h\le \frac{\log X}{2\log p}}\frac{h^{\nu_1-1}}{p^{h-1}}.
\end{align*}
Now, from Lemma \ref{lemme25}, $\sum_{h\ge 1}\frac{h^{\nu_1-1}}{p^{h-1}} \le 2^{\nu_1}(\nu_1-1)!$ (we use this upper bound for the far right hand factor) and having $\sum_{p\le z}\frac{(\log p)^{\nu_1}}{p} \le (\log z)^{\nu_1-1}\sum_{p\le z}\frac{\log p}{p} \le (\log z)^{\nu_1}$ we get:
\begin{align*}
\sumun  &\le 1.26 \times 2^{\nu_1-\nu_2+1}(\nu_1-1)!\sqrt{X} (\log X)^{\nu_2-1}(\log z)^{\nu_1}\\ 
&\le 2.52\times 2^{\nu_1-\nu_2}(\nu_1-1)!\sqrt{X} (\log X)^{\nu_1+\nu_2-1}\delta^{\nu_1}.
\end{align*}
We divide by $(\nu_1-1)!(\nu_2-1)!$,
\begin{equation}
\sumun\le 2.52\text{MT}. 2 c^2 \times 2^{\nu_1-\nu_2}\frac{(\nu_1+\nu_2-1)!}{(\nu_2-1)!}\frac{\sqrt{X}}{\hat{F}(X)}\delta^{\nu_1}.
\end{equation}
Here, again, we use \eqref{H8} and Lemma \ref{lemme9}: $(\nu_1+\nu_2)!\le \sqrt{2\pi (\nu_1+\nu_2)}\left( \frac{\nu_1+\nu_2}{e}\right)^{\nu_1+\nu_2} e^{\frac{1}{12(\nu_1+\nu_2)}}$ and $\nu_2!\ge \sqrt{2\pi \nu_2}\left( \frac{\nu_2}{e}\right)^{\nu_2}$ so our upper bound becomes:
\begin{align*}
& 5.04\text{MT} .c^2\times 2^{\nu_1-\nu_2}\delta^{\nu_1} \frac{\sqrt{\nu_2}}{\sqrt{\nu_1+\nu_2}}\left(\frac{\nu_1+\nu_2}{e}\right)^{\nu_1+\nu_2}\times \left(\frac{e}{\nu_2}\right)^{\nu_2}\frac{\delta^{\max(\nu_1,\nu_2)}}{z}\\
& \le 5.04 \text{MT}.c^2\times 2^{\nu_1-\nu_2}\frac{((\nu_1+\nu_2)\delta)^{\nu_1+\nu_2}}{z e^{\nu_1} \nu_2^{\nu_2}}\frac{\sqrt{\nu_2}}{\sqrt{\nu_1+\nu_2}}.
\end{align*}

Meanwhile, the second term
\begin{equation*}
\sum_{p\le z}\sum_{h\le \frac{\log X}{2\log p}} (\log p)^{\nu_2} h^{\nu_2-1} \sum_{m\le \frac{\sqrt{X}}{p^h}}\Lambda(m)(\log m)^{\nu_1-1}
\end{equation*}
is no more than 
\begin{equation*}
5.04 \text{MT}. c^2\times 2^{\nu_2-\nu_1}\frac{((\nu_1+\nu_2))^{\nu_1+\nu_2}\delta^{2\nu_2}}{z e^{\nu_2} \nu_1^{\nu_1}}\frac{\sqrt{\nu_1}}{\sqrt{\nu_1+\nu_2}}.
\end{equation*}
And because $\delta^{2\nu_2} \le \delta^{\nu_1+\nu_2}$ and $f(n)\le B$ we easily get:
\begin{equation*}
\sum_{\substack{n \le X^{1/2} \\ (n, P(z)) \neq 1}}\Lambda^{(\nu_1)} \star \Lambda^{(\nu_2)}(n) f(n) \le 0.01 \text{MT}. B. c^2 ((\nu_1+\nu_2)\delta)^{\nu_1+\nu_2}.
\end{equation*}

But the following upper bound will be more useful in our case $f(n)=\Lambda(n+2)$:
\begin{equation}
\sum_{\substack{n \le X^{1/2} \\ (n, P(z)) \neq 1}}\Lambda^{(\nu_1)} \star \Lambda^{(\nu_2)}(n) f(n)\le 11 \text{MT} \frac{B}{X^{\delta}}c^2\left(\frac{2}{e}\right)^{\nu_2}((\nu_1+\nu_2)\delta)^{\nu_1+\nu_2}.
\end{equation}
Assuming $X$ is large enough to have $11/X^{\delta}\le 1/\log X$, we deduce
\begin{equation}
\sum_{\substack{n \le X^{1/2} \\ (n, P(z)) \neq 1}}\Lambda^{(\nu_1)} \star \Lambda^{(\nu_2)}(n) f(n)\le  \text{MT} \frac{B}{\log X}c^2\left(\frac{2}{e}\right)^{\nu_2}((\nu_1+\nu_2)\delta)^{\nu_1+\nu_2}.
\end{equation}

\subsubsection{Upper bound for $\sum\Lambda^{(\nu_1)} \star \Lambda^{(\nu_2)}(n) f(n)$ for $m\le X^{1/2}$}
In this part we find an upper bound for $\sum_{\substack{n \le X \\ (n, P(z)) \neq 1}}\Lambda^{(\nu_1)} \star \Lambda^{(\nu_2)}(n) f(n)$ when $n=p^h m$ with $m=p'^k \le X^{1/2}$. 

Let us denote this sum by $\Sigma_m$. 

We deal with the convolution product the same way we did in part \ref{7.1} using the same upper bound for $\pi(n)$. Because of the values taken by $\Lambda$ and noticing that $m=p'^k \le X^{1/2}$ is equivalent to $k\le \frac{\log X}{\log p'}$, we have
\begin{multline*}
\sum_{p\le z}\sum_{h\le \frac{\log X}{\log p}} (\log p)^{\nu_1} h^{\nu_1-1} \sum_{p'\le \sqrt{X}}\sum_{k\le \frac{\log X}{2\log p'}}(\log p')^{\nu_2}k^{\nu_2-1}\\
 + \sum_{p\le z}\sum_{h\le \frac{\log X}{\log p}} (\log p)^{\nu_2} h^{\nu_2-1} \sum_{p'\le \sqrt{X}}\sum_{k\le \frac{\log X}{\log p'}}(2\log p')^{\nu_1}k^{\nu_1-1}
\end{multline*}
now
\begin{align}
& \sum_{p\le z}\sum_{h\le \frac{\log X}{\log p}} (\log p)^{\nu_1} h^{\nu_1-1} \sum_{p'\le \sqrt{X}}\sum_{k\le \frac{\log X}{2\log p'}}(\log p')^{\nu_2}k^{\nu_2-1} \\
\notag & \le \sum_{p\le z} \left( \frac{\log X}{\log p}\right)^{\nu_1}(\log p)^{\nu_1}\sum_{p'\le \sqrt{X}} \left( \frac{\log X}{2\log p'}\right)^{\nu_2}(\log p')^{\nu_2}\\
\notag & \le 2^{-\nu_2}(\log X)^{\nu_1+\nu_2} \pi (z) \pi (\sqrt{X}) \le 1.26^2 2^{-\nu_2}(\log X)^{\nu_1+\nu_2} \frac{z}{\log z} \frac{\sqrt{X}}{\log \sqrt{X}}\\
& \le 1.26^2 \times \frac{1}{2^{\nu_2-1}} (\log X)^{\nu_1+\nu_2-2}\frac{z\sqrt{X}}{\delta}.
\end{align}
The other term will have the same upper bound with a factor $\frac{1}{2^{\nu_1-1}}$. Finally we take the upper bound:
\begin{equation*}
1.26^2 \times \frac{1}{2^{\nu_1-2}} (\log X)^{\nu_1+\nu_2-2}\frac{z\sqrt{X}}{\delta}.
\end{equation*}
We divide the sum by $(\nu_1-1)!(\nu_2-1)!$, and use $B$ as the upper bound of $f(n)$ and we use Lemma \ref{lemme8}.

So 
\begin{align}
\notag \Sigma_m &\le 1.26^2 \times \frac{1}{2^{\nu_1-2}}  \text{MT}. B. \frac{(\nu_1+\nu_2-1)!}{(\nu_1-1)!(\nu_2-1)!}2 c^2 \frac{z\sqrt{X}}{\delta\hat{F}(X)\log X}\\
&\le 1.26^2 \times \frac{1}{2^{\nu_1-3}}  c^2 \text{MT}. B. \frac{(\nu_1+\nu_2-1)!}{(\nu_1-1)!(\nu_2-1)!} \frac{\delta^{\nu_2}}{\log X}.
\end{align}
Indeed, we use \eqref{H8} and $\frac{1}{2^{\nu_1-3}} \le 4$, and invoking Lemma \ref{lemme9} we have $(\nu_1+\nu_2)!\le \sqrt{2\pi (\nu_1+\nu_2)}\left( \frac{\nu_1+\nu_2}{e}\right)^{\nu_1+\nu_2} e^{\frac{1}{12(\nu_1+\nu_2)}}$ and $(\nu_i)!\ge \sqrt{2\pi \nu_i}\left( \frac{\nu_i}{e}\right)^{\nu_i}$ for $i\in \lbrace 1; 2 \rbrace$.
\begin{align*}
\Sigma_m &\le 1.26^2 \times 4 c^2 \text{MT}. B.\frac{\delta^{\nu_2}}{\log X}\frac{\sqrt{\nu_1}\sqrt{\nu_2}}{\sqrt{2\pi}\sqrt{\nu_1+\nu_2}} e^{\frac{1}{12(\nu_1+\nu_2)}} \frac{(\nu_1+\nu_2)^{\nu_1+\nu_2}}{\nu_1^{\nu_1}\nu_2^{\nu_2}}\\
&\le 3 c^2 \text{MT}. B.\delta^{\nu_2}\frac{\sqrt{\nu_1}\sqrt{\nu_2}}{\sqrt{\nu_1+\nu_2}} \frac{(2\nu_2)^{\nu_1+\nu_2}}{\nu_1^{\nu_1}\nu_2^{\nu_2}}\\
&\le 3 c^2 \text{MT}. \frac{B}{\log X} \left(\frac{\nu_2}{\nu_1}\right)^{\nu_1}(4 \delta)^{\nu_2}.
\end{align*}

\subsubsection{Upper bound for $\sum\Lambda^{(\nu_1)} \star \Lambda^{(\nu_2)}(n) f(n)$ for $m$ prime $\ge X^{1/2}$}
Note that $m \ge \sqrt{X}$ implies $p^h \le \sqrt{X}$. So now we are left with:

\begin{align}
\notag & \sum_{\substack{p^h \le\sqrt{X} \\ p\le z}} (\log p)^{\nu_1} h^{\nu_1-1} \sum_{ \sqrt{X}\le p'\le \frac{X}{p^h}}(\log p')^{\nu_2}f(p' p^h) \\
&+ \sum_{\substack{p^h \le\sqrt{X} \\ p\le z}} (\log p)^{\nu_2} h^{\nu_2-1} \sum_{ \sqrt{X}\le p'\le \frac{X}{p^h}}(\log p')^{\nu_1}f(p' p^h).
\end{align}
These terms are symmetrical in $\nu_1$ and $\nu_2$ so we find an upper bound for one of them and will deduce the second one by swapping $\nu_1$ and $\nu_2$. Finally we will take twice the greater upper bound.

We have
\begin{align}
\notag & \sum_{\substack{p^h \le\sqrt{X} \\ p\le z}} (\log p)^{\nu_2} h^{\nu_2-1} \sum_{ \sqrt{X}\le p'\le \frac{X}{p^h}}(\log p')^{\nu_1}f(p' p^h) \\
&\le \sum_{\substack{p^h \le\sqrt{X} \\ p\le z}} (\log p)^{\nu_2} h^{\nu_2-1}\max_{p'\le X/p^h}(\log p')^{\nu_1}  \sum_{ p'\le \frac{X}{p^h}}f(p' p^h).
\end{align}

We set a fixed $p$,  and we use the fact that the sum over primes $p'$ is no more than the sum over integers $n$ such that $n$ is relatively prime to $P(\tilde{z})$ but not with $p$, let $P_p(\tilde{z}) $ be the product of all primes $\le \tilde{z}$ except $p$, where $\tilde{z}=X^{1/4}$. 
It means we are using anothe sieve here, and we invoke the Fundamental Lemma (lemme \ref{lemme22}) with $M= \tilde{z}=X^{1/4}$, $\tilde{\sigma}=\sigma$ and the constant $\tilde{c}=c$ is unchanged. 
The $\lambda^+_d$ generated by the Fundamental Lemma will be used below. First
\begin{equation*}
\sum_{ p'\le \frac{X}{p^h}}f(p' p^h)\le \sum_{\substack{n \le X/p^h \\ (n,P(\tilde{z}))=1\\ (n,p)\neq 1}}f(n p^h)
\end{equation*}
with our sieve we get
\begin{equation}
\sum_{\substack{n \le X/p^h \\ (n,P(\tilde{z}))=1\\ (n,p)\neq 1}}f(n p^h)\le \sum_{n \le X/p^h}\sum_{\substack{d\mid P_p(\tilde{z}) \\ d\mid n}}\lambda^+_d f(n p^h)
\le \sum_{d\mid P_p(\tilde{z})}\lambda^+_d\sum_{\substack{n \le X/p^h \\ d\mid n}}f(n p^h).
\end{equation}

We write $n = dm$ (as $d\mid n$) in the second sum, we get $\sum_{m\le \frac{X}{d p^h}} f(m d p^h)$.

We use approximation (\ref{approx2}):
\begin{equation*}
\sum_{n\le y/d} f(dn)= \sigma(d)F(y)+r_{d}(f,y)
\end{equation*}
 with $y=X$ here, and $d$ is our $d p^h$. 
 
 \begin{align}
\notag & \sum_{p^h\le \sqrt{X}}(\log p)^{\nu_2} h^{\nu_2-1}\max_{p'\le X/p^h}(\log p')^{\nu_1}\sum_{d\mid P_p(\tilde{z})}\lambda^+_d r_{d p^h}(f,X)\\
 &\le \sum_{q\le D_0} \sum _{p^h\mid q}(\log p)^{\nu_2} h^{\nu_2-1}\max_{p'\le X/p^h}(\log p')^{\nu_1}r_q(f,X)\\
\notag  &\le \sum_{q\le D_0} (\log p)^{\nu_2-1} h^{\nu_2-1}\max_{p'\le X/p^h}(\log p')^{\nu_1}\sum _{p^h\mid q} (\log p) r_q(f,X)
 \end{align}
where $D_0 \ge X^{3/4}  (= \sqrt{X} X^{1/4})$ and we have taken $q=dp^h$.

The sum over $p^h$ is actually over $p$ and over $h$. Studying the variations of the function $y \longrightarrow (\log(y))^{\nu_2-1}(\log(\frac{X}{y}))^{\nu_1}$ for $y\le \sqrt{X}$, the reader can check that the maximum is reached over $\mathbb{R}$ for $y = X^{\frac{\nu_2-1}{\nu_1+\nu_2-1}} > \sqrt{X}$ (we assume that $\nu_2\ge \nu_1+1$) but $y\le \sqrt{X}$, and using the fact that $\sum _{p^h\mid q}\log p= \log q \le \log X$, the previous term is lower than or equal to
\begin{equation}
 \frac{1}{2^{\nu_1+\nu_2-1}}(\log X)^{\nu_1+\nu_2-1} \log X \sum_{q\le D_0}r_q(f,X)\le
\frac{1}{2^{\nu_1+\nu_2-1}}(\log X)^{\nu_1+\nu_2}R(f, D_0,1).
\end{equation}

Noticing that $d\mid P_p(\tilde{z})$ implies $d$ relatively prime to $p$, and $\sigma$ being multiplicative, $\sigma(d p^h) =\sigma(d)\sigma(p^h)$. 
 
So after we have applied the Fundametal Lemma, our initial expression is no more than
\begin{align*}
(\log X)^{\nu_1}\sum_{\substack{p^h \le\sqrt{X} \\ p\le z}}(\log p)^{\nu_2} h^{\nu_2-1}\sigma(p^h)\frac{V_{\sigma}(X^{1/4})}{1 - \sigma(p)}\hat{F}(X)(1+C_0(c)) \\
+ \frac{1}{2^{\nu_1+\nu_2-1}}(\log X)^{\nu_1+\nu_2}R(f, D_0,1).
\end{align*}
Indeed, we have here $e^{-(\log M)/\log z} = e^{-1} \le 1$ and the eulerian product in the upper bound is no more than $\frac{V_{\sigma}(X^{1/4})}{1 - \sigma(p)}$ (the factor $1 - \sigma(p)$ might be missing in the case of $p\le X^{1/4}$ as $(n,p)=1$).
In the first term, we simply used $\max_{p'\le X/p^h}(\log p')^{\nu_1}\le (\log X)^{\nu_1}$.

Regarding this first term, hypothesis $(H_1)$ imply, using $u = v= p$, that $(1- \sigma(p))^{-1}\le c$, and using hypothesis $(H_6)$ we have $\sigma(p^h)\le \frac{c}{p^h}$. 

So the first term is no more than
\begin{equation*}
(\log X)^{\nu_1}V_{\sigma}(X^{1/4})\sum_{p\le z}\frac{(\log p)^{\nu_2}}{p} \sum_{h\ge 1}\frac{h^{\nu_2-1}}{p^{h-1}} c^2 \hat{F}(X)(1+C_0(c)).
\end{equation*}

We now use Lemma \ref{lemme25}. On the other hand, hypothesis $(H_2)$ gives us $V_{\sigma_0}(z)\le c/\log z$, and $V_{\sigma_0}(X^{1/4})\le c/\log X$ comes from hypothesis $(H_7)$.
An upper bound for $V_{\sigma}(X^{1/4})$ can be found in (44) and (45) from \cite{Ram}, 
a constant $u$ arises, equal to $c$ in the case of $\sigma_0$ and equal to $c e^{c\Delta}$ in the case of $\sigma$ and so our upper bound is now:
\begin{equation*}
(\log X)^{\nu_1}\frac{V_{\sigma}(z)}{V_{\sigma_0}(z)}\frac{u}{\log X} (\log z)^{\nu_2} 2^{\nu_2}(\nu_2-1)! c^2 \hat{F}(X)(1+C_0(c)).
\end{equation*}

We divide both terms by $(\nu_1-1)!(\nu_2-1)!$ bounding again the factorials according to Lemma \ref{lemme9} and get the following upper bound:

\begin{align*}
&\qquad \qquad \sum_{\substack{p^h p' \le X \\ p^h \le\sqrt{X} \\ p\le z}}\Lambda^{(\nu_1)} \star \Lambda^{(\nu_2)}(n) f(n)\\
 &\le \text{MT}. u ( 2\delta)^{\nu_2} \frac{(\nu_1+\nu_2)^{\nu_1+\nu_2}}{\nu_2^{\nu_2}} e^{-\nu_1} e^{\frac{1}{24}} c^2 (1+C_0(c)) 
+\frac{1}{2^{\nu_1+\nu_2-1}}\frac{(\log X)^{\nu_1+\nu_2}}{(\nu_1-1)!(\nu_2-1)!} R(f, D_0,1)              \\
& \le \text{MT}. u \left( 2\delta (1 + \frac{\nu_1}{\nu_2})\right)^{\nu_2} \left( \frac{\nu_1+\nu_2}{e}\right)^{\nu_1} c^2 e^{\frac{1}{24}}(1+C_0(c))
 +\frac{1}{2^{\nu_1+\nu_2-1}}\frac{(\log X)^{\nu_1+\nu_2}}{(\nu_1-1)!(\nu_2-1)!} R(f, D_0,1)\\   
 & \le \text{MT}. u (4\delta)^{\nu_2} ( \nu_2)^{\nu_1} c^2 e^{\frac{1}{24}}(1+C_0(c))
 +\frac{1}{2^{\nu_1+\nu_2-1}}\frac{(\log X)^{\nu_1+\nu_2}}{(\nu_1-1)!(\nu_2-1)!} R(f, D_0,1).
\end{align*}
Thus the second term is no more than
\begin{equation*}
\text{MT}. u (4\delta)^{\nu_1} ( \nu_1)^{\nu_2} c^2 e^{\frac{1}{24}}(1+C_0(c))
 +\frac{1}{2^{\nu_1+\nu_2-1}}\frac{(\log X)^{\nu_1+\nu_2}}{(\nu_1-1)!(\nu_2-1)!} R(f, D_0,1)
\end{equation*}
and the convolution product is no more than
\begin{equation*}
2 \text{MT}. u (4\delta)^{\nu_1} ( \nu_1)^{\nu_2} c^2 e^{\frac{1}{24}}(1+C_0(c))
 +\frac{1}{2^{\nu_1+\nu_2-2}}\frac{(\log X)^{\nu_1+\nu_2}}{(\nu_1-1)!(\nu_2-1)!} R(f, D_0,1).
\end{equation*}

\subsubsection{Combining all that}
We add up the upper bounds from the fours parts above for $f_0$ and $f$, using $(H_9)$ so we know the contribution of  $R(f, D_0,1) $ are $\le A' \hat{F}(X)/(\log X)^2$ and we find the global upper bound $(\theta_1+\theta_2)\text{MT}$ where
\begin{equation}
\vert \theta_1 \vert \le 4 \frac{B}{\log X} c^3(1 + e^{c\Delta})(1+ C_0(c))(4\delta)^{\nu_1}\nu_1^{\nu_2}
\end{equation}
and
\begin{equation}
\vert \theta_2 \vert \le \frac{1}{2^{\nu_1+\nu_2-2}}\frac{(\nu_1+\nu_2-1)!}{(\nu_1-1)!(\nu_2-1)!}\frac{V_{\sigma_0}}{V_{\sigma}}\frac{A'}{\log X}.
\end{equation}
And using Lemma \ref{lemme9},
\begin{equation*}
\frac{(\nu_1+\nu_2-1)!}{(\nu_1-1)!(\nu_2-1)!}\le \frac{\sqrt{\nu_1}\sqrt{\nu_2}}{\sqrt{\nu_1+\nu_2}}e^{\frac{1}{24}} \frac{(\nu_1+\nu_2)^{\nu_1+\nu_2}}{\nu_1^{\nu_1}\nu_2^{\nu_2}},
\end{equation*}
we use $\frac{V_{\sigma_0}}{V_{\sigma}}\le 2c^2$ and we get:
\begin{equation}
\vert \theta_2 \vert \le 4 \frac{\sqrt{\nu_1}\sqrt{\nu_2}}{\sqrt{\nu_1+\nu_2}}e^{\frac{1}{24}} \frac{(\nu_1+\nu_2)^{\nu_1+\nu_2}}{(2\nu_1)^{\nu_1}(2\nu_2)^{\nu_2}} 2c^2 \frac{A'}{\log X}.
\end{equation}
We write $\nu_1= x(\nu_1+\nu_2)$, and so we need to find an upper boud for $\exp(h(x))$ where
\begin{multline*}
h(x) = \frac{1}{2}\log x + \frac{1}{2}\log(1-x) - x(\nu_1+\nu_2)\log x - (1- x)(\nu_1+\nu_2)\log(1 - x)\\
= \left(\frac{1}{2} - x(\nu_1+\nu_2)\right)\log x +  \left(\frac{1}{2} -(1- x)(\nu_1+\nu_2)\right)\log (1-x)
\end{multline*}
whose derivative is
\begin{equation*}
h'(x) = (\nu_1+\nu_2)(\log(1-x) -\log(x)) + \frac{1}{2}\left(\frac{1}{x} - \frac{1}{1-x}\right).
\end{equation*}
The maximum of $h$ is reached for $x= \frac{1}{2}$ so $\nu_1=\nu_2$. We finally get
\begin{equation}
\vert \theta_2 \vert \le 4 c^2 \sqrt{\nu_1+\nu_2}e^{\frac{1}{24}} \frac{A'}{\log X} \le 5 c^2 \sqrt{\nu_1+\nu_2}\frac{A'}{\log X} \le 0.01A'.
\end{equation}

\subsection{Final result}
Our final result, see \ref{hypo} is then\\

For all $1\le \nu_1\le\nu_2$,
\begin{multline*}
\sum_{n\le X} \Lambda^{(\nu_1+\nu_2)}(n) f(n) + \sum_{n\le X} \Lambda^{(\nu_1)}\star \Lambda^{(\nu_2)}(n) f(n)\\
= \frac{V_{\sigma}(z)}{V_{\sigma_0}(z)}\left(\sum_{n\le X} \Lambda^{(\nu_1+\nu_2)}(n) f_0(n) + \sum_{n\le X} \Lambda^{(\nu_1)}\star \Lambda^{(\nu_2)}(n) f_0(n)\right)\\
 + (\rho +\theta) \frac{V_{\sigma}(z)}{V_{\sigma_0}(z)} \hat{F}(X) \frac{(\log X)^{\nu_1+\nu_2 - 1}}{(\nu_1+\nu_2 -1)!}
\end{multline*}
where
\begin{equation*}
\vert \rho \vert \le \frac{3}{2} \times (149\nu_2)^{\nu_1+\nu_2} 
 \times \biggl(Ac^2+\left( C_0(c) e \delta^{3\nu_2} +\Delta4^{\frac{1}{\delta}} \right) \left(\frac{c}{\delta}\right)^{2 \nu_2} \biggr)
\end{equation*}
and we get an upper bound for $\vert \theta\vert$ by adding the contribution of the two first remainder terms for the preliminary sieve (i.e. $\rho$) and we take together the other contribution of the sieve (i.e. $\tilde{\rho}$) and the one from $\theta_1$ and $\theta_2$ which came when removing the sieve. In the second term, the last line, $2c^2(1+e^{c\Delta})\left(1+C_0(c)\right)$ can be considered as bounded by a constant and $\text{Cste}\times (4\delta)^{\nu_2}\nu_2^{\nu_1} \le (24\nu_2)^{\nu_2} \left(3\nu_2^2\log \frac{1}{\delta}\right)^{\nu_1+2\nu_2}\delta^{\nu_1}$. So
\begin{align}\label{theta}
 \vert\theta\vert &\le 2(24\nu_2)^{\nu_2}B_0 \left(3\nu_2^2\log \frac{1}{\delta}\right)^{\nu_1+2\nu_2}\delta^{\nu_1} 
+ 2\frac{B}{\log X} c^2(1+e^{c\Delta})\left(1+C_0(c)\right)(4\delta)^{\nu_2}\nu_2^{\nu_1} + A'\\
\notag&\le 2 \times 648^{\nu_2} \nu_2^{2\nu_1+5\nu_2} (\frac{B}{\log X}+B_0) \delta^{\nu_1}\left(\log \frac{1}{\delta}\right)^{\nu_1+2\nu_2}+A'.
\end{align}


\section{A step towards the twin prime conjecture}

\subsection{Conjecture}\label{conje}
We work under the following conjecture:
\begin{conj}
For all $\delta$ and for all $\theta$, there exists a constant $A_0(\delta, \theta)$ such that
\begin{equation}
\sum_{\substack{d\le X^{1-\delta} \\ (d, 2) =1}}\max_{y\le X}\left| \psi(y;d,2) -\frac{y}{\varphi(d)}\right| \le A_0(\delta, \theta)\frac{X}{(6\log X)^{\theta}}.
\end{equation}
\end{conj}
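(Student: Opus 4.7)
The plan is to attack this by the large sieve combined with a combinatorial identity for $\Lambda$, following the Bombieri--Vinogradov strategy and its refinements. First I would detect the residue class $n\equiv 2\pmod d$ via Dirichlet characters: for $(d,2)=1$,
$$\psi(y;d,2)-\frac{y}{\varphi(d)}=\frac{1}{\varphi(d)}\sum_{\chi\neq\chi_0}\bar\chi(2)\psi(y,\chi)+O(\log^2 X),$$
and then pass from characters mod $d$ to primitive characters of conductor $q\mid d$. Summing over $d\le X^{1-\delta}$ and carrying out the standard reduction, the problem is reduced to bounding, uniformly in $y\le X$,
$$\sum_{q\le X^{1-\delta}}\frac{q}{\varphi(q)}\,\sideset{}{^{\star}}\sum_{\chi\bmod q}\max_{y\le X}|\psi(y,\chi)|$$
where the inner sum runs over primitive $\chi$.

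Next I would split $\Lambda$ by a Heath-Brown or Vaughan identity into a bounded number of bilinear (Type~II) and linear (Type~I) sums of the shape $\sum_{m\sim M}\sum_{n\sim N}a_mb_n\chi(mn)$ with $MN\asymp X$. The classical \emph{multiplicative large sieve inequality}
$$\sum_{q\le Q}\frac{q}{\varphi(q)}\,\sideset{}{^{\star}}\sum_{\chi\bmod q}\Bigl|\sum_{n\le N}a_n\chi(n)\Bigr|^2\ll (Q^2+N)\,\|a\|_2^2$$
then handles every bilinear piece provided $Q\le X^{1/2-\varepsilon}$, which is precisely the Bombieri--Vinogradov range $\delta>1/2$, where the required bound (with arbitrary logarithmic saving) follows via the Siegel--Walfisz theorem in the complementary range of small conductors.

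To push $\delta$ below $1/2$ I would follow the Fouvry--Iwaniec and Bombieri--Friedlander--Iwaniec program: treat Type~I sums by Poisson summation in the long variable (gaining saving from the fact that $2$ is a \emph{fixed} residue, which is what makes the conjecture with residue class $2$ more accessible than with variable residue), and treat Type~II sums by opening the $\varphi(d)^{-1}$ and applying Weil bounds on incomplete Kloosterman sums, eventually invoking the Deshouillers--Iwaniec spectral estimates for averages of Kloosterman sums over moduli. Each additional input shaves the exponent of distribution down.

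The main obstacle is of course the endpoint behaviour as $\delta\to 0$: Friedlander and Granville showed that the conjecture is \emph{false} at $\delta=0$, so any proof must carefully quantify how $A_0(\delta,\theta)$ blows up. For fixed $\delta>0$ close to zero, the Kloosterman-sum inputs currently fall short, and bridging that gap unconditionally appears to require a genuinely new idea beyond the large sieve plus automorphic spectral input — which is why, despite the plan above being the natural route, the conjecture remains open for every $\delta<1/2$ and we must assume it in the sequel.
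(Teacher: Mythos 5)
The statement you were asked to ``prove'' is stated in the paper as a \emph{conjecture} (a variant of Elliott--Halberstam restricted to the residue class $2$), and the paper offers no proof of it: it is assumed as a hypothesis, becomes $(H_{10})$, and all the main theorems are conditional on it. Your proposal correctly ends by conceding exactly this --- that the estimate is known only for $\delta>1/2$ (Bombieri--Vinogradov) and must be assumed for smaller $\delta$ --- so there is no gap to report, because neither you nor the paper claims a proof. Your sketch of the known partial results (character decomposition, Vaughan/Heath-Brown identities plus the multiplicative large sieve giving level of distribution $1/2$, the Fouvry--Iwaniec and Bombieri--Friedlander--Iwaniec Kloosterman-sum program for fixed residues, and the Friedlander--Granville failure at $\delta=0$) is accurate and consistent with the paper's own remarks in the introduction. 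The only caution is presentational: this material is context for why the conjecture is plausible yet open, not a proof, and should not be presented as one.
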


It is a conjecture about the distribution of prime numbers in arithmetic progressions close to Elliott-Halberstam conjecture (but over only one congruence class).
This hypothesis is similar to the one used by Zhang in \cite{Zhang}:  
\begin{equation*}
\sum_{\substack{q\le X^{0.5+2\omega}\\ p|q => p\le X^{\omega}}} | \psi(X;q,a)-X/\varphi(q) | \ll X/(\log X)^c
\end{equation*}
with $\omega = 1/1168$ and for any integer $a$ such that
$p|a \Rightarrow p \le X^\omega$.

\subsection{Using theorem \ref{thm2} with $f(n) = \Lambda(n+2)$}
Theorem \ref{thm2} can be applied to twin prime numbers when taking $f(n) = \Lambda(n+2)$ and $f_0(n) = 1$. 

First, we calculate the asymptotic sums implying $f_0$.  
\subsubsection{Reference asymptotics}
We need the following definitions and properties:
\begin{defi}
For all $\kappa \ge 1$,
\begin{equation}
\mathscr{F}_{\kappa}(X)= \int_1^X \log^{\kappa-1} t dt.
\end{equation}
\end{defi}
Then we have
\begin{lem}\label{lemme26}
\begin{equation*}
\mathscr{F}_{\kappa}(X) = X \sum_{1\le k \le \kappa} (-1)^{k-1} \frac{(\kappa-1)!}{(\kappa-k)!}\log^{\kappa-k} X + (-1)^{\kappa}(\kappa -1)!
\end{equation*}
\end{lem}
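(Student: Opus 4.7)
The statement is an explicit closed form for the iterated integral $\int_1^X(\log t)^{\kappa-1}dt$, so the natural plan is a straightforward induction on $\kappa\ge 1$, fuelled by a single integration by parts. For the base case $\kappa=1$ one checks directly that $\mathscr{F}_1(X)=\int_1^X dt=X-1$, which matches the claimed formula since the sum collapses to the $k=1$ term $X$ and the boundary contribution is $(-1)^1\cdot 0!=-1$.

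For the inductive step, I would integrate by parts: writing $(\log t)^{\kappa-1}=1\cdot(\log t)^{\kappa-1}$ and antidifferentiating the factor $1$, one obtains
\begin{equation*}
\mathscr{F}_\kappa(X)=\bigl[t(\log t)^{\kappa-1}\bigr]_1^X-(\kappa-1)\int_1^X(\log t)^{\kappa-2}dt
=X(\log X)^{\kappa-1}-(\kappa-1)\mathscr{F}_{\kappa-1}(X).
\end{equation*}
Substituting the induction hypothesis for $\mathscr{F}_{\kappa-1}(X)$ and absorbing the factor $-(\kappa-1)$ into the factorials turns the coefficient $\frac{(\kappa-2)!}{(\kappa-1-k)!}$ into $\frac{(\kappa-1)!}{(\kappa-1-k)!}$, while the sign flips from $(-1)^{k-1}$ to $(-1)^k$. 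Reindexing the summation variable by $k\mapsto k+1$ (so that $k=1,\dots,\kappa-1$ becomes $k'=2,\dots,\kappa$) rewrites this contribution as $X\sum_{k'=2}^{\kappa}(-1)^{k'-1}\frac{(\kappa-1)!}{(\kappa-k')!}(\log X)^{\kappa-k'}$, and the leading boundary term $X(\log X)^{\kappa-1}$ supplies exactly the missing $k'=1$ summand. The constant term becomes $-(\kappa-1)\cdot(-1)^{\kappa-1}(\kappa-2)!=(-1)^\kappa(\kappa-1)!$, matching the claim.

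There is no real obstacle here: the only thing to take minimal care with is the index shift and tracking of signs in the inductive step, which is routine. The lemma is purely a computational identity used later to feed explicit asymptotics into Theorem~\ref{thm2} when $f_0\equiv 1$.
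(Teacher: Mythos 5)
Your proof is correct: the base case, the integration by parts giving $\mathscr{F}_\kappa(X)=X(\log X)^{\kappa-1}-(\kappa-1)\mathscr{F}_{\kappa-1}(X)$, and the reindexing all check out. The paper gives no proof of its own here (it simply cites Lemma 26 of Ramar\'e), and your induction is exactly the standard argument one would expect behind that reference.
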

See \cite{Ram}, Lemma 26.
And thus
\begin{equation}
\mathscr{F}_{\kappa}(X) \asymp X \log^{\kappa-1} X.
\end{equation}
We will also use the following results (from \cite{Ram}, Lemmas 27 and 28):
\begin{lem}\label{lemme27}
There exists a positive constant $c$ such that, for $X \ge 3$ and $X \ge \mathfrak{f} \ge 1$,
\begin{equation*}
\sume_{n\le X}\Lambda^{(\kappa)}(n) = \frac{\mathscr{F}_{\kappa}(X)}{(\kappa-1)!}\left(1 + \mathcal{O}(\mathcal{L}^c)\right)
\end{equation*}
\end{lem}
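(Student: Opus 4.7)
The plan is to reduce the identity to the Prime Number Theorem via partial summation, and then to handle the coprimality condition to $\mathfrak{f}$ as an easily controlled lower-order term. Write
\begin{equation*}
(\kappa-1)!\sume_{n\le X}\Lambda^{(\kappa)}(n) \;=\; \sum_{n\le X}\Lambda(n)(\log n)^{\kappa-1} \;-\; \sum_{\substack{n\le X\\ (n,\mathfrak{f})>1}}\Lambda(n)(\log n)^{\kappa-1}.
\end{equation*}

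For the unrestricted sum I would apply Abel summation with $\psi(t)=\sum_{n\le t}\Lambda(n)$, obtaining
\begin{equation*}
\sum_{n\le X}\Lambda(n)(\log n)^{\kappa-1} \;=\; \psi(X)(\log X)^{\kappa-1} - (\kappa-1)\int_1^X \psi(t)\frac{(\log t)^{\kappa-2}}{t}\,dt,
\end{equation*}
and substitute the PNT estimate $\psi(t)=t+O(t\,\mathcal{L}^{c})$ (where $\mathcal{L}$ is the standard PNT error factor used in \cite{Ram}). The main term becomes $X(\log X)^{\kappa-1}-(\kappa-1)\int_1^X (\log t)^{\kappa-2}dt$, which by one further integration by parts equals $\int_1^X(\log t)^{\kappa-1}dt=\mathscr{F}_\kappa(X)$; the error contribution from PNT is $O(\mathscr{F}_\kappa(X)\mathcal{L}^{c})$ after routine majorisation, using Lemma \ref{lemme26} to compare $X(\log X)^{\kappa-1}$ with $\mathscr{F}_\kappa(X)$.

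The restricted sum is controlled trivially. Any $n$ contributing must be a prime power $p^h$ with $p\mid\mathfrak{f}$, hence
\begin{equation*}
\sum_{\substack{n\le X\\ (n,\mathfrak{f})>1}}\Lambda(n)(\log n)^{\kappa-1} \;\le\; \sum_{p\mid \mathfrak{f}}\,\sum_{h\le\log X/\log p}(\log p)(h\log p)^{\kappa-1} \;\ll\; \omega(\mathfrak{f})(\log X)^{\kappa}.
\end{equation*}
The bound $\omega(\mathfrak{f})\ll \log \mathfrak{f}/\log\log \mathfrak{f}\ll\log X$ (valid because $\mathfrak{f}\le X$) then gives $O((\log X)^{\kappa+1})$, which is absorbed into the error $O(\mathscr{F}_\kappa(X)\mathcal{L}^{c})\asymp X(\log X)^{\kappa-1}\mathcal{L}^{c}$ for any admissible PNT exponent.

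The only genuine obstacle I foresee is notational bookkeeping: ensuring that the constant $c$ produced by PNT matches (perhaps after shrinking) the constant appearing in the statement, and that the symbol $\mathcal{L}$ is used consistently with its meaning in \cite{Ram}. The analytic ingredients — partial summation, integration by parts, and the trivial bound on the prime-power defect — are all classical and should cause no real difficulty.
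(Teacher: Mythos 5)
Your argument is correct, but it is worth noting that the paper does not actually prove this lemma: it is imported verbatim from \cite{Ram} (Lemma 27), so there is no internal proof to compare against. Your self-contained route --- splitting off the integers with $(n,\mathfrak{f})>1$, applying Abel summation to $\sum_{n\le X}\Lambda(n)(\log n)^{\kappa-1}$, inserting the strong form of the Prime Number Theorem, and recognising $X(\log X)^{\kappa-1}-(\kappa-1)\mathscr{F}_{\kappa-1}(X)=\mathscr{F}_\kappa(X)$ --- is the standard proof and is essentially what the cited source does. Two small points deserve a line of care if this were written out in full. First, in the integral $\int_1^X\psi(t)(\log t)^{\kappa-2}t^{-1}\,dt$ the PNT error factor is $\mathcal{L}(t)^c$, not $\mathcal{L}(X)^c$, and is not small for bounded $t$; splitting the range at $\sqrt{X}$ (trivial bound below, $\mathcal{L}(t)^c\le\mathcal{L}(X)^{c'}$ above) settles this, as your phrase ``routine majorisation'' implicitly assumes. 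Second, your implied constants depend on $\kappa$ (through the factor $\kappa-1$ and the comparison of $X(\log X)^{\kappa-1}$ with $\mathscr{F}_\kappa(X)$); the lemma as stated does not claim uniformity in $\kappa$, but since the paper later lets $\nu_1+\nu_2$ grow slowly with $X$, it would be prudent to record how the error depends on $\kappa$. The treatment of the coprimality defect via $\omega(\mathfrak{f})\ll\log X$ and absorption into $O(\mathscr{F}_\kappa(X)\mathcal{L}^c)$ is fine, since $\mathcal{L}^{-c}=X^{o(1)}$.
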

where $\mathcal{L}=\exp\left(-\frac{\log^{3/5} X}{\log \log^{1/5}X}\right)$.

\begin{lem}\label{lemme28}
For any integers $\nu_1$ and $\nu_2$, and $X \ge 1$, we have
\begin{equation*}
\int_1^X \frac{\mathscr{F}_{\nu_1}(X/t)\mathscr{F}'_{\nu_2}(t)}{(\nu_1-1)!(\nu_2-1)!}dt = \frac{\mathscr{F}_{\nu_1+\nu_2}(X)}{(\nu_1+\nu_2-1)!}.
\end{equation*}
\end{lem}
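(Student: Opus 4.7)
The plan is to differentiate both sides of the claimed identity with respect to $X$, check that the two derivatives agree, and conclude using the common initial value at $X = 1$. Let $I(X)$ denote the left-hand side. Since $\mathscr{F}_{\nu_1+\nu_2}'(X) = (\log X)^{\nu_1+\nu_2-1}$ directly from the definition, the right-hand side differentiates to $(\log X)^{\nu_1+\nu_2-1}/(\nu_1+\nu_2-1)!$.

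To differentiate $I(X)$ I would apply Leibniz's rule. The boundary contribution at $t = X$ is proportional to $\mathscr{F}_{\nu_1}(1) = 0$ and therefore vanishes, so only the partial derivative inside the integrand survives. Using $\frac{\partial}{\partial X}\mathscr{F}_{\nu_1}(X/t) = \frac{1}{t}(\log(X/t))^{\nu_1-1}$, one obtains
$$\frac{dI}{dX}(X) = \int_1^X \frac{(\log(X/t))^{\nu_1-1}(\log t)^{\nu_2-1}}{t\,(\nu_1-1)!(\nu_2-1)!}\,dt.$$
The substitution $s = \log t$, $ds = dt/t$, with $s$ ranging from $0$ to $\log X$, transforms this into
$$\frac{dI}{dX}(X)=\int_0^{\log X}\frac{(\log X - s)^{\nu_1-1}\,s^{\nu_2-1}}{(\nu_1-1)!(\nu_2-1)!}\,ds,$$
which, after setting $s = (\log X)u$, is (up to the factorials) a classical Beta integral of value $(\log X)^{\nu_1+\nu_2-1}B(\nu_1,\nu_2) = (\log X)^{\nu_1+\nu_2-1}(\nu_1-1)!(\nu_2-1)!/(\nu_1+\nu_2-1)!$. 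After cancelling the factorials, the derivatives of both sides agree.

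Finally, at $X = 1$ the left-hand side is an integral over an empty interval, and $\mathscr{F}_{\nu_1+\nu_2}(1) = 0$, so both sides vanish there. Integrating the equality of derivatives from $1$ to $X$ yields the identity. The only step needing any care is the evaluation of the Beta integral (which is classical) and the application of Leibniz's rule at the moving endpoint; neither presents a real obstacle, so the proof is essentially a one-line differentiation plus a standard integral evaluation.
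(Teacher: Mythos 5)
Your proof is correct. Note that the paper does not actually prove this lemma --- it is quoted from Ramar\'e's article (Lemma 28 of \cite{Ram}) --- so there is no in-paper argument to compare against; your differentiation argument is a valid self-contained substitute. The key steps all check out: the boundary term in Leibniz's rule vanishes because $\mathscr{F}_{\nu_1}(1)=0$, the inner derivative is $\frac{1}{t}(\log(X/t))^{\nu_1-1}$ since $\mathscr{F}_{\nu_1}'(y)=(\log y)^{\nu_1-1}$, the substitutions reduce the derivative of the left-hand side to $(\log X)^{\nu_1+\nu_2-1}B(\nu_1,\nu_2)/\bigl((\nu_1-1)!(\nu_2-1)!\bigr)=(\log X)^{\nu_1+\nu_2-1}/(\nu_1+\nu_2-1)!$, which matches the derivative of the right-hand side, and both sides vanish at $X=1$. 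An equally natural alternative would be to write the left-hand side as the double integral $\iint_{t,u\ge 1,\ tu\le X}\frac{(\log u)^{\nu_1-1}(\log t)^{\nu_2-1}}{(\nu_1-1)!(\nu_2-1)!}\,du\,dt$ and evaluate it directly, but your route via the Beta integral is just as clean and requires no extra machinery.
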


From which we deduce the following lemma:
\begin{lem}\label{lem29}
There exists a positive constane $c$ such that, for $X \ge 3$ and $X\ge \mathfrak{f} \ge 1$ we have
\begin{equation}
\sume_{n\le X}\Lambda^{(\nu_1+\nu_2)}(n)+\sume_{n\le X}\Lambda^{(\nu_1)}\star\Lambda^{(\nu_2)}(n)=\frac{2\mathscr{F}_{\nu_1+\nu_2}(X)}{(\nu_1+\nu_2-1)!} \left(1 + \mathcal{O}(\mathcal{L}^c)\right).
\end{equation}

\end{lem}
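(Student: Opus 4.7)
The plan is to treat the two sums separately and show each equals $\frac{\mathscr{F}_{\nu_1+\nu_2}(X)}{(\nu_1+\nu_2-1)!}(1+\mathcal{O}(\mathcal{L}^c))$. The first sum, $\sume_{n\le X}\Lambda^{(\nu_1+\nu_2)}(n)$, is handled immediately by applying Lemma~\ref{lemme27} with $\kappa=\nu_1+\nu_2$. All the work goes into the convolution sum.

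For the second sum, I would expand the convolution and swap the order of summation to write
\begin{equation*}
\sume_{n\le X}\Lambda^{(\nu_1)}\star \Lambda^{(\nu_2)}(n) = \sume_{a\le X}\Lambda^{(\nu_2)}(a)\sume_{b\le X/a}\Lambda^{(\nu_1)}(b),
\end{equation*}
where the starred sums respecting coprimality with $\mathfrak{f}$ are preserved since multiplicativity ensures $(ab,\mathfrak{f})=1\Leftrightarrow (a,\mathfrak{f})=(b,\mathfrak{f})=1$. Apply Lemma~\ref{lemme27} to the inner sum, replacing $\sume_{b\le X/a}\Lambda^{(\nu_1)}(b)$ by $\mathscr{F}_{\nu_1}(X/a)/(\nu_1-1)!$ up to a relative error $\mathcal{O}(\mathcal{L}(X/a)^c)$. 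Then, performing Abel summation against the weight $\Lambda^{(\nu_2)}(a)$ and inserting the asymptotic for $S(t)=\sume_{a\le t}\Lambda^{(\nu_2)}(a)$ from Lemma~\ref{lemme27}, the main term becomes
\begin{equation*}
\int_1^X \frac{\mathscr{F}_{\nu_1}(X/t)\,\mathscr{F}'_{\nu_2}(t)}{(\nu_1-1)!(\nu_2-1)!}\,dt.
\end{equation*}
Lemma~\ref{lemme28} then collapses this integral to $\mathscr{F}_{\nu_1+\nu_2}(X)/(\nu_1+\nu_2-1)!$, which produces exactly the desired main term and, doubled with the first sum, yields the factor $2$.

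The main obstacle is the uniform control of the error term across the range of~$t$. The bound $\mathcal{O}(\mathcal{L}(X/a)^c)$ from Lemma~\ref{lemme27} degrades as $a$ approaches $X$, so I would split the outer summation at some threshold (for instance $a\le\sqrt{X}$ versus $a>\sqrt{X}$) and, in the latter range, swap the roles of $a$ and $b$ using the symmetry $\Lambda^{(\nu_1)}\star\Lambda^{(\nu_2)}=\Lambda^{(\nu_2)}\star\Lambda^{(\nu_1)}$ so that the variable carrying the asymptotic is always $\le\sqrt{X}$. The contribution near the endpoints $t=1$ and $t=X$ must also be estimated separately using the trivial bound $\Lambda^{(\kappa)}(n)\ll (\log n)^{\kappa-1}\Lambda(n)$ together with Chebyshev-type estimates; since $\mathscr{F}_{\nu_1+\nu_2}(X)\asymp X\log^{\nu_1+\nu_2-1}X$, any contribution of size $\mathcal{O}(X\log^{\nu_1+\nu_2-1}X\cdot\mathcal{L}^c)$ is absorbed into the claimed error. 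Assembling these pieces and adding the contribution from the first sum gives the stated identity.
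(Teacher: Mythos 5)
Your proposal is correct and follows essentially the same route as the paper: Lemma~\ref{lemme27} for the first sum, then the Dirichlet hyperbola method (your split at $\sqrt{X}$ with the roles of the two variables swapped is exactly that), conversion of the resulting weighted sum into the integral $\int_1^X \mathscr{F}_{\nu_1}(X/t)\mathscr{F}'_{\nu_2}(t)\,dt/((\nu_1-1)!(\nu_2-1)!)$, and Lemma~\ref{lemme28} to collapse it to the main term. The only bookkeeping detail you leave implicit, which the paper carries out, is that the doubly-counted corner $\ell,m\le\sqrt{X}$ contributes $\mathscr{F}_{\nu_1}(\sqrt{X})\mathscr{F}_{\nu_2}(\sqrt{X})/((\nu_1-1)!(\nu_2-1)!)$, which cancels against the portion of the integral over $[1,\sqrt{X}]$.
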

\begin{proof}
Lemma \ref{lemme27} gives us the first term of the left hand side sum. For the second term (the convolution product) we use the Dirichlet hyperbola method: $\sum_{\ell \le X}\sum_{m\le X/\ell}=\sum_{\substack{\ell \le \sqrt{X}\\ m\le X/\ell}}+ \sum_{\substack{m \le \sqrt{X}\\ \ell \le X/m}} - \sum_{\substack{\ell \le \sqrt{X}\\ m\le \sqrt{X}}}$.

Thus, we calculate:
\begin{equation}
\sume_{\ell\le \sqrt{X}}\sume_{m\le X/\ell}\Lambda^{(\nu_1)}(\ell)\Lambda^{(\nu_2)}(m).
\end{equation}
From Lemma \ref{lemme27}, there exists a constant $c_1$ such that the last term is equal to
\begin{equation*}
\sume_{\ell\le \sqrt{X}} \Lambda^{(\nu_1)}(\ell) \frac{\mathscr{F}_{\nu_2}(\frac{X}{\ell})}{(\nu_2-1)!} \left(1 + \mathcal{O}(\mathcal{L}^{c_1})\right).
\end{equation*}
Using Lemma \ref{lemme26} and Lemma \ref{lemme9}, we find the remainder term is the size of
\begin{equation*}
X\frac{\log^{\nu_1+\nu_2} X}{(\nu_1-1)!(\nu_2-1)!}\mathcal{L}^{c_1} \ll \frac{\mathscr{F}_{\nu_1+\nu_2}(X)}{(\nu_1+\nu_2-1)!}\mathcal{L}^{c_2}
\end{equation*}
where $c_2$ is another constant.
We calculate the main term the following way:
\begin{align*}
 \sume_{\ell\le \sqrt{X}} \Lambda^{(\nu_1)}(\ell) \frac{\mathscr{F}_{\nu_2}(\frac{X}{\ell})}{(\nu_2-1)!} &= \sume_{\ell\le \sqrt{X}} \frac{\Lambda^{(\nu_1)}(\ell)}{(\nu_2-1)!}\int_1^{\frac{X}{\ell}}\mathscr{F}'_{\nu_2}(t) dt\\
&=\int_1^X \sume_{\ell \le \min(\frac{X}{t},\sqrt{X})}\frac{\Lambda^{(\nu_1)}(\ell)\mathscr{F}'_{\nu_2}(t)}{(\nu_2-1)!} dt\\
&= \int_1^X\frac{\mathscr{F}_{\nu_1}(\min(\frac{X}{t},\sqrt{X}))}{(\nu_1-1)!}\frac{\mathscr{F}'_{\nu_2}(t)}{(\nu_2-1)!}dt +\text{terme d'erreur}.
\end{align*}
Now
\begin{multline*}
\int_1^X\frac{\mathscr{F}_{\nu_1}(\min(\frac{X}{t},\sqrt{X}))}{(\nu_1-1)!}\frac{\mathscr{F}'_{\nu_2}(t)}{(\nu_2-1)!}dt =\\
 \int_1^{\sqrt{X}}\frac{\mathscr{F}_{\nu_1}(\sqrt{X})}{(\nu_1-1)!}\frac{\mathscr{F}'_{\nu_2}(t)}{(\nu_2-1)!}dt +
 \int_{\sqrt{X}}^X\frac{\mathscr{F}_{\nu_1}(\frac{X}{t})}{(\nu_1-1)!}\frac{\mathscr{F}'_{\nu_2}(t)}{(\nu_2-1)!}dt
\end{multline*}
where
\begin{equation*}
\int_1^{\sqrt{X}}\frac{\mathscr{F}_{\nu_1}(\sqrt{X})}{(\nu_1-1)!}\frac{\mathscr{F}'_{\nu_2}(t)}{(\nu_2-1)!}dt = \frac{\mathscr{F}_{\nu_1}(\sqrt{X})\mathscr{F}_{\nu_2}(\sqrt{X})}{(\nu_1-1)!(\nu_2-1)!}.
\end{equation*}
We use again the Dirichlet hyperbola method, unsing the symmetry between $\ell$ and $m$ we find:
\begin{multline*}
\sume_{n\le X}\Lambda^{(\nu_1)}\star\Lambda^{(\nu_2)}(n) = \int_{\sqrt{X}}^X\frac{\mathscr{F}_{\nu_1}(\frac{X}{t})}{(\nu_1-1)!}\frac{\mathscr{F}'_{\nu_2}(t)}{(\nu_2-1)!}dt 
+ \int_{\sqrt{X}}^X\frac{\mathscr{F}_{\nu_2}(\frac{X}{t})}{(\nu_2-1)!}\frac{\mathscr{F}'_{\nu_1}(t)}{(\nu_1-1)!}dt \\
+  \frac{\mathscr{F}_{\nu_1}(\sqrt{X})\mathscr{F}_{\nu_2}(\sqrt{X})}{(\nu_1-1)!(\nu_2-1)!} +\mathcal{O}\left(\frac{\mathscr{F}_{\nu_1+\nu_2}(X)}{(\nu_1+\nu_2-1)!}\mathcal{L}^{c_2}\right).
\end{multline*}
We have used Lemma \ref{lemme27} for the subtracted term 
\begin{equation*}
\sume_{\substack{\ell \le \sqrt{X}\\ m\le \sqrt{X}}}\Lambda^{(\nu_1)}(\ell)\Lambda^{(\nu_2)}(m) = \frac{\mathscr{F}_{\nu_1}(\sqrt{X})\mathscr{F}_{\nu_2}(\sqrt{X})}{(\nu_1-1)!(\nu_2-1)!}(1 + \mathcal{O}(\mathcal{L}^{c_1})).
\end{equation*}

Using integration by parts and then the substitution $u = X/t$, we find that
\begin{equation}
\int_{\sqrt{X}}^X\frac{\mathscr{F}_{\nu_1}(\frac{X}{t})}{(\nu_1-1)!}\frac{\mathscr{F}'_{\nu_2}(t)}{(\nu_2-1)!}dt  = \int_1^{\sqrt{X}}\frac{\mathscr{F}'_{\nu_1}(u)}{(\nu_1-1)!}\frac{\mathscr{F}_{\nu_2}(\frac{X}{u})}{(\nu_2-1)!}du 
\end{equation}
and so the main term is equal to
\begin{equation}
\int_1^X \frac{\mathscr{F}_{\nu_2}(\frac{X}{t})\mathscr{F}'_{\nu_1}(t)}{(\nu_1-1)!(\nu_2-1)!}dt = \frac{\mathscr{F}_{\nu_1+\nu_2}(X)}{(\nu_1+\nu_2-1)!}
\end{equation}
from Lemma \ref{lemme28}.
\end{proof}

\subsubsection{Using theorem \ref{thm2}}
We apply the thoerem taking the following parameters: 
\begin{multline*}
\mathfrak{f} = 2,\quad \sigma(d)= \frac{\mathbbm{1}_{(d,2)=1}}{\varphi(d)}, \quad \sigma_0(d) = \frac{1}{d}, \quad
 f_0(n) = 1,  \quad  \hat{F}(X) = X\\
\end{multline*}
and $F(y) = y$. Note that for $f(n)= \Lambda(n+2)$ and $f_0(n)=1$, we have $B=\log X$ and $B_0=1$.

The conjecture \ref{conje} used with $\theta = 5(\nu_1+ \nu_2)^2+5$ gives us the following hypothesis: for all $\delta$  there exists a constant $A_0$ depending only on $\delta$, such that
\begin{equation}\label{H10}\tag{$H_{10}$}
\sum_{\substack{d\le X^{1-\delta} \\ (d, 2) =1}}\max_{y\le X}\left| \psi(y;d,2) -\frac{y}{\varphi(d)}\right| \le \frac{A_0}{\log^2 X} \frac{X}{(6\log X)^{5(\nu_1+ \nu_2)^2+3}}.
\end{equation}
And we define $A''= A_0/\log^2 X$.


\begin{cor}\label{cor}
There exists a constant $c>0$ such that for any $X$ large enough, under the conjecture \ref{conje}, for all $\delta> 100/\sqrt{\log X} $, and for all $\nu_1$ and $\nu_2$, such that $\nu_1\le \nu_2 \le (\log X)^{1/10}$ and $\nu_2^2\delta \log \left(\frac{1}{\delta}\right)\le \frac{1}{8}$, we have
\begin{multline*}
\sum_{n\le X} \Lambda(n+2)(\Lambda^{(\nu_1+\nu_2)}+\Lambda^{(\nu_1)}\star \Lambda^{(\nu_2)})(n)\\
= \frac{2 \mathfrak{S}_2 \mathscr{F}_{\nu_1+\nu_2}(X)}{(\nu_1+\nu_2-1)!}(1+\mathcal{O}(\mathcal{L}^c+ \sqrt{A''} +\nu_2^{2\nu_1+5\nu_2}\delta^{\frac{\nu_1}{2}}))
\end{multline*}
where $ \mathfrak{S}_2 = \prod_{3\le p} \frac{p(p-2)}{(p-1)^2}$ is the twin prim numbers constant ($\mathfrak{S}_2=0,660...$).

In a more simple way,
\begin{multline*}
\sum_{n\le X} \Lambda(n+2)(\Lambda^{(\nu_1+\nu_2)}+\Lambda^{(\nu_1)}\star \Lambda^{(\nu_2)})(n)\\
= \frac{2 \mathfrak{S}_2 \mathscr{F}_{\nu_1+\nu_2}(X)}{(\nu_1+\nu_2-1)!}\left(1+\mathcal{O}\biggl(\frac{1}{\log X} +\nu_2^{2\nu_1+5\nu_2}\delta^{\frac{\nu_1}{2}}\biggr)\right)
\end{multline*}
as by definition $\mathcal{L}^c$ and $ \sqrt{A''}$ are both some $\mathcal{O}\left(1/\log X\right)$.
\end{cor}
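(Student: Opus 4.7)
The plan is to specialise Theorem \ref{thm2} to the choice $f(n)=\Lambda(n+2)$ and $f_0(n)=1$, with the parameters listed just before the statement: $\mathfrak{f}=2$, $\sigma(d)=\mathbbm{1}_{(d,2)=1}/\varphi(d)$, $\sigma_0(d)=1/d$, $\hat{F}(X)=F(X)=X$, $B=\log X$, $B_0=1$, and to read off the corollary from the resulting identity. First I would verify the hypotheses $H_1$--$H_8$: the bounds on $\sigma$, $\sigma_0$ and their Mertens-type products are classical, with $H_6$ coming from $\sigma(p^h)p^h=p/(p-1)$; $H_7$ is the standard Mertens estimate; and $H_8$ is satisfied as soon as $z\sqrt{X}\delta^{-\nu_2-1}\le X$, which the assumption $\delta>100/\sqrt{\log X}$ easily delivers. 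The decisive hypothesis is $H_9$: the remainder $r_d(f,y)$ equals $\psi(y+2;d,2)-y/\varphi(d)$ up to $O(\log X)$, so the pointwise bound holds with $C=O(\log X)$ and the averaged bound $R(f,D_0,1)\le A'\hat{F}(X)/\log^2 X$ is provided by the Elliott--Halberstam hypothesis \eqref{H10}, in which the exponent $\theta=5(\nu_1+\nu_2)^2+5$ has been calibrated so that $A'=A''/(6\log X)^{5(\nu_1+\nu_2)^2+3}$.

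Next I would evaluate the two pieces produced by Theorem \ref{thm2}. A direct factorisation gives
\[
\frac{V_\sigma(z)}{V_{\sigma_0}(z)}=\frac{1}{1-\tfrac12}\prod_{2<p\le z}\frac{1-\tfrac{1}{p-1}}{1-\tfrac1p}=2\prod_{2<p\le z}\frac{p(p-2)}{(p-1)^2}=2\mathfrak{S}_2\bigl(1+O(z^{-1})\bigr),
\]
the tail $O(z^{-1})=O(X^{-\delta})$ being comfortably absorbed by $\mathcal{L}^c$. For the model sum, Lemma \ref{lem29} gives
\[
\sum_{n\le X}\Lambda^{(\nu_1+\nu_2)}(n)+\sum_{n\le X}\Lambda^{(\nu_1)}\star\Lambda^{(\nu_2)}(n)=\frac{2\mathscr{F}_{\nu_1+\nu_2}(X)}{(\nu_1+\nu_2-1)!}\bigl(1+O(\mathcal{L}^c)\bigr),
\]
and combining the two produces the announced main term $2\mathfrak{S}_2\mathscr{F}_{\nu_1+\nu_2}(X)/(\nu_1+\nu_2-1)!$.

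Finally I would estimate the residual $(\rho+\theta)\cdot\text{MT}$. A short computation using $|\sigma(p^a)-\sigma_0(p^a)|=1/(p^a(p-1))$ gives $\Delta=O(1/z)$, which makes $\Delta\cdot 4^{1/\delta}(c/\delta)^{2\nu_2}=O(X^{-\delta}4^{1/\delta})$ negligible under $\delta>100/\sqrt{\log X}$, while the term $C_0(c)e\delta^{3\nu_2}(c/\delta)^{2\nu_2}$ collapses to $O(c^{2\nu_2}\delta^{\nu_2})$ and is dominated by $\nu_2^{2\nu_1+5\nu_2}\delta^{\nu_1/2}$ (since $\nu_2\ge\nu_1/2$). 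The dominant $\rho$-contribution is $(149\nu_2)^{\nu_1+\nu_2}Ac^2$: substituting $A=\sqrt{C(2\log X)^{4\nu_2^2}A'}$ with $C=O(\log X)$ and the above value of $A'$, the resulting power of $\log X$ is $(1+4\nu_2^2-5(\nu_1+\nu_2)^2-3)/2\le -1$, and the combinatorial factor $(149\nu_2)^{\nu_1+\nu_2}$ is sub-polynomial in $\log X$ because $\nu_2\le(\log X)^{1/10}$, so $\rho=O(\sqrt{A''})$. For $\theta$, the second term in \eqref{theta} is dominated by $\sqrt{A''}+1/\log X$ and the third term $A'$ is even smaller; the first, rewritten as $\nu_2^{2\nu_1+5\nu_2}\cdot 24^{\nu_2}3^{\nu_1+2\nu_2}(\log(1/\delta))^{\nu_1+2\nu_2}\delta^{\nu_1}$, is converted into the desired $O(\nu_2^{2\nu_1+5\nu_2}\delta^{\nu_1/2})$ by splitting $\delta^{\nu_1}=\delta^{\nu_1/2}\cdot\delta^{\nu_1/2}$ and using the constraint $\nu_2^2\delta\log(1/\delta)\le 1/8$ to absorb the $(\log(1/\delta))$-factors. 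The main obstacle is precisely this last trade-off: checking that the heavy $(\log(1/\delta))^{\nu_1+2\nu_2}$ can indeed be tamed to leave only $\delta^{\nu_1/2}$ in the clean bound demands careful bookkeeping, and the tight matching of the EH exponent $5(\nu_1+\nu_2)^2+5$ against the factor $(2\log X)^{4\nu_2^2}$ in $A$ is a point where no margin can be wasted.
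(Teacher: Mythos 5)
Your route is the same as the paper's: specialise Theorem \ref{thm2} to $f(n)=\Lambda(n+2)$, $f_0=1$, verify $(H_1)$--$(H_9)$ with the averaged remainder supplied by $(H_{10})$ and the pointwise one by Brun--Titchmarsh, evaluate the model sum via Lemma \ref{lem29}, and fold $\rho+\theta$ into the stated error. Your calibration of $A$, $A'$, $A''$ against the exponent $5(\nu_1+\nu_2)^2+5$, your bound on $\Delta$, and the final absorption of $648^{\nu_2}(\log(1/\delta))^{\nu_1+2\nu_2}$ into $\delta^{\nu_1/2}$ all reproduce the paper's steps (that last absorption is exactly as terse in the paper as in your sketch, so I will not hold it against you).

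The one point that does not close is the Euler product. You compute
\[
\frac{V_\sigma(z)}{V_{\sigma_0}(z)}=\frac{1}{1-\tfrac12}\prod_{2<p\le z}\frac{1-\tfrac{1}{p-1}}{1-\tfrac1p}=2\mathfrak{S}_2\bigl(1+O(z^{-1})\bigr),
\]
and then multiply by the model value $2\mathscr{F}_{\nu_1+\nu_2}(X)/(\nu_1+\nu_2-1)!$ from Lemma \ref{lem29}; that product is $4\mathfrak{S}_2\mathscr{F}_{\nu_1+\nu_2}(X)/(\nu_1+\nu_2-1)!$, twice the main term you announce. The paper evaluates the same ratio as $\prod_{3\le p\le z}\frac{1-1/(p-1)}{1-1/p}=\mathfrak{S}_2(1+\mathcal{O}(X^{-\delta}))$: the factor at $p=2$ is excluded, consistently with the convention that the starred products and sums (and the quantities built from them when $\mathfrak{f}=2$) range only over primes and integers coprime to $\mathfrak{f}$. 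It is this value $\mathfrak{S}_2$, combined with the factor $2$ already present in Lemma \ref{lem29}, that produces the constant $2\mathfrak{S}_2$ of the corollary. As written, your main term is off by a factor of $2$; you need either to drop the $p=2$ factor from $V_\sigma/V_{\sigma_0}$ or to exhibit the compensating $1/2$ elsewhere (for instance in the normalisation of $\sigma_0$ and $F$ for the starred counting function $\sume_{n\le y/d}f_0(dn)$).
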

\begin{proof}
We want to use Theorem \ref{thm2} which is
\begin{multline*}
\sum_{n\le X}\Lambda(n+2)\left( \Lambda^{(\nu_1+\nu_2)} + \Lambda^{(\nu_1)}\star \Lambda^{(\nu_2)}\right)(n)\\
= \frac{V_{\sigma}(z)}{V_{\sigma_0}(z)}\sum_{n\le X}\left( \Lambda^{(\nu_1+\nu_2)} + \Lambda^{(\nu_1)}\star \Lambda^{(\nu_2)}\right)(n)
 + (\rho +\theta) \frac{V_{\sigma}(z)}{V_{\sigma_0}(z)} X \frac{(\log X)^{\nu_1+\nu_2 - 1}}{(\nu_1+\nu_2 -1)!}
\end{multline*}
where
\begin{equation}
\frac{V_{\sigma}(z)}{V_{\sigma_0}(z)}= \prod_{3\le p\le z} \frac{1- \frac{1}{p-1}}{1-\frac{1}{p}} = \prod_{3\le p\le z} \frac{p(p-2)}{(p-1)^2} = \mathfrak{S}_2(1+ \mathcal{O}(\frac{1}{z}))= \mathfrak{S}_2(1 + \mathcal{O}(X^{-\delta})).
\end{equation}
So we check all the needed hypothesis.

In particular, hypothesis $(H_9)$ follows on one side from the Brun-Titchmarsh theorem (see \cite{Mont} for instance) with $C=\mathcal{O}(\log(\log X))$ and on the other side from hypothesis $(H_{10})$.
Indeed, this last hypothesis gives us an upper bound for $R(f, D_0,1)$ where $D_0 = X^{1-\delta}$ and $f(n)= \Lambda(n+2)$. And with the same notations as in Part 1, we also get the upper bound for $R(1, X^{1-\delta},1)$ as $f_0(n) = 1$, and knowing that $r_d(1,y)$ is the fractional part of $y/d$ as $\sigma_0(d) = 1/d$ and $F(y)=y$ and $\sum_{n\le y/d}1 = \frac{1}{d} \times y + r_d(1,y)$.
So $R(1, X^{1-\delta},1)\le X^{1-\delta}\le \frac{X}{(\log X)^{\theta}}$ for all $\theta$. At last we notice that $V_{\sigma}(z)/V_{\sigma_0}(z)=\mathfrak{S}_2(1 + \mathcal{O}(X^{-\delta}))$ from the calculation just above. 

So we need $A' $ such that
\begin{equation}
A'\ge A''(6\log X)^{-5(\nu_1+\nu_2)^2-1} + \mathfrak{S}_2 X^{-\delta}\log^2X .
\end{equation}

Furthermore, from the previous lemma,
\begin{equation}
\sum_{n\le X}\left( \Lambda^{(\nu_1+\nu_2)} + \Lambda^{(\nu_1)}\star \Lambda^{(\nu_2)}\right)(n) = \frac{2\mathscr{F}_{\nu_1+\nu_2}(X)}{(\nu_1+\nu_2-1)!} \left(1 + \mathcal{O}(\mathcal{L}^c))\right)
\end{equation}
and we notice that
\begin{equation*}
X \frac{(\log X)^{\nu_1+\nu_2 - 1}}{(\nu_1+\nu_2 -1)!} = \mathcal{O}\left(  \frac{\mathscr{F}_{\nu_1+\nu_2}(X)}{(\nu_1+\nu_2-1)!}\right).
\end{equation*}
At last, from Theorem \ref{thm2} and (\ref{theta}) we had
\begin{multline*}
\vert \rho\vert + \vert \theta \vert \ll \frac{3}{2} \times (149\nu_2)^{\nu_1+\nu_2} 
 \times \biggl(Ac^2+\left( C_0(c) e \delta^{3\nu_2} +\Delta4^{\frac{1}{\delta}} \right) \left(\frac{c}{\delta}\right)^{2 \nu_2} \biggr)\\
  2 \times 648^{\nu_2} \nu_2^{2\nu_1+5\nu_2} (\frac{B}{\log X}+B_0) \delta^{\nu_1}\left(\log \frac{1}{\delta}\right)^{\nu_1+2\nu_2}+A'
\end{multline*}
with
\begin{align*}
\Delta = \sum_{\substack{p^{a}\le X \\ p \ge z}}\left| \sigma(p^{a}) -\sigma_0(p^{a})\right| = 
\sum_{\substack{p^{a}\le X \\ p \ge z\\ (p, 2) = 1}}\left| \frac{1}{\varphi(p^{a})}- \frac{1}{p^{a}}\right| 
= \sum_{\substack{p^{a}\le X \\ p \ge z\\ (p, 2) = 1}}\frac{1}{p^{a}}\times \frac{1}{p-1}\\
\le \frac{1}{z} \sum_{n \le X} \frac{1}{n} \ll X^{-\delta}\log X.
\end{align*}
And here $ \frac{B}{\log X}+B_0= 2$.
Our aim is now to simplify this upper bound.

First, let us prove that $4^{\frac{1}{\delta}}\Delta$ is no more than $\delta^{3\nu_2}$ for a large enough $X$.

On one hand we have
\begin{equation*}
4^{\frac{1}{\delta}}\Delta \le 4^{\frac{1}{\delta}}\frac{\log X}{X^{\delta}} \qquad \text{et} \qquad \left(\frac{100}{\sqrt{\log X}}\right)^{3\nu_2} < \delta^{3\nu_2},
\end{equation*}
taking the logarithm and having $\delta\ge \frac{100}{\sqrt{\log X}}$, we find that
\begin{equation*}
4^{\frac{1}{\delta}}\frac{\log X}{X^{\delta}} \le \left(\frac{100}{\sqrt{\log X}}\right)^{3\nu_2}
\end{equation*}
is equivalent to 
\begin{equation*}
\log (\log X) \left(1+ \frac{3}{2}\nu_2\right) \le 3\nu_2\log 100 + \left(100 - \frac{1}{25}\right) \sqrt{\log X}.
\end{equation*}

On the other hand, $A'$ is obviously (much) lower than $\frac{1}{2}(149\nu_2)^{\nu_1+\nu_2} Ac^2$ so we bound both contributions from $A$ and $A'$ with $2(149\nu_2)^{\nu_1+\nu_2} Ac^2$.

Taking $A = c_1(149\nu_2)^{-\nu_1-\nu_2}\sqrt{A''}$, and using the fact that $\log(\log X)\le \log X$ in the constant $C$, we get
\begin{equation*}
A'' \ll (149\nu_2)^{2(\nu_1+\nu_2)}c^4(2 \log X)^{4\nu_2^2 +1}A' \ll (6\log X)^{5\nu_2^2 +1} A'
\end{equation*}
assuming $4 \le \nu_2 \le (\log X)^{\frac{1}{10}}$ (and still $\nu_1 \le \nu_2$).

At last, for a small enough $\delta$ we have
\begin{equation*}
648^{\nu_2} \nu_2^{2\nu_1+5\nu_2} \delta^{\nu_1}\left(\log \frac{1}{\delta}\right)^{\nu_1+2\nu_2} \ll \nu_2^{2\nu_1+5\nu_2}\delta^{\frac{\nu_1}{2}}.
\end{equation*}
\end{proof}

\subsection{Consequence: A theorem on twins almost prime}
\subsubsection{Simplifying the asymptotic}
In this section, we denote
\begin{equation*}
\Lambda_{\nu_1,\nu_2}(n) = (\Lambda^{(\nu_1+\nu_2)}+\Lambda^{(\nu_1)}\star \Lambda^{(\nu_2)})(n)
\end{equation*}
and
\begin{equation*}
\tilde{\Lambda}_{\nu_1,\nu_2}(n) = \frac{\Lambda_{\nu_1,\nu_2}(n)}{(\log n)^{\nu_1+\nu_2-1}}.
\end{equation*}
We then notice that
\begin{equation}
\tilde{\Lambda}_{\nu_1,\nu_2}(n) = \frac{\Lambda(n)}{(\nu_1+\nu_2-1)!} + \frac{\sum_{d_1 d_2 =n}\Lambda(d_1)(\log d_1)^{\nu_1-1}\Lambda(d_2)(\log d_2)^{\nu_2-1}}{(\nu_1-1)!(\nu_2-1)!(\log n)^{\nu_1+\nu_2-1}}.
\end{equation}

And so Corollary \ref{cor} can be written
\begin{equation*}
\sum_{n\le X} \Lambda(n+2)\Lambda_{\nu_1,\nu_2}(n)= \frac{2 \mathfrak{S}_2 \mathscr{F}_{\nu_1+\nu_2}(X)}{(\nu_1+\nu_2-1)!}\left(1+\mathcal{O}\biggl(\frac{1}{\log X} +\nu_2^{2\nu_1+5\nu_2}\delta^{\frac{\nu_1}{2}}\biggr)\right).
\end{equation*}
Here, we work under the conditions
\begin{equation*}
\delta > 100/\sqrt{\log X} \qquad \nu_1, \nu_2 \le (\frac{1}{2}\log X)^{1/10} \qquad  \nu_2^2\delta \log \left(\frac{1}{\delta}\right)\le \frac{1}{8}.
\end{equation*}

We look into 
\begin{equation}
\sum_{n\le X} \Lambda(n+2)\tilde{\Lambda}_{\nu_1,\nu_2}(n) = \sum_{n\le X} \Lambda(n+2)\frac{\Lambda_{\nu_1,\nu_2}(n)}{(\log n)^{\nu_1+\nu_2-1}},
\end{equation}
but
\begin{align*}
\sum_{n\le X} \Lambda(n+2)\frac{\Lambda_{\nu_1,\nu_2}(n)}{(\log n)^{\nu_1+\nu_2-1}} &= \sum_{n\le X} \Lambda(n+2)\Lambda_{\nu_1,\nu_2}(n) \left( \int_n^X -\frac{(\nu_1+\nu_2-1)dt}{t(\log t)^{\nu_1+\nu_2}} + \frac{1}{(\log X)^{\nu_1+\nu_2-1}} \right)\\
&=  \frac{1}{(\log X)^{\nu_1+\nu_2-1}}\sum_{n\le X} \Lambda(n+2)\Lambda_{\nu_1,\nu_2}(n) \\
&- \int_2^X\sum_{n\le t}\Lambda(n+2)\frac{(\nu_1+\nu_2-1)\Lambda_{\nu_1,\nu_2}(n)}{t(\log t)^{\nu_1+\nu_2}}dt.
\end{align*}
This last integral will turn out to be an error term. We give an estimate of it using the fact that $\int_2^X = \int_2^{\sqrt{X}} + \int_{\sqrt{X}}^X$ and use Corollary \ref{cor} for the second term as the condition $\nu_1,\nu_2 \le \min_{\sqrt{X} \le t\le X} (\log t)^{1/10} = (\frac{1}{2}\log X)^{1/10}$ is fulfilled.

Hence
\begin{multline*}
\int_{\sqrt{X}}^X \sum_{n\le t}\Lambda(n+2)\frac{(\nu_1+\nu_2-1)\Lambda_{\nu_1,\nu_2}(n)}{t(\log t)^{\nu_1+\nu_2}}dt  \\
= \int_{\sqrt{X}}^X  \frac{2 \mathfrak{S}_2 \mathscr{F}_{\nu_1+\nu_2}(t)}{(\nu_1+\nu_2-1)! t(\log t)^{\nu_1+\nu_2}}\left(1+\mathcal{O}\biggl(\frac{1}{\log X} +\nu_2^{2\nu_1+5\nu_2}\delta^{\frac{\nu_1}{2}}\biggr)\right)dt.
\end{multline*}
From Lemma \ref{lemme26},
\begin{equation}
\mathscr{F}_{\nu_1+\nu_2}(t) \sim t(\log t)^{\nu_1 +\nu_2-1}.
\end{equation}
More precisely, $\mathscr{F}_{\nu_1+\nu_2}(t)$ being an alternating series we even have
\begin{equation*}
\mathscr{F}_{\nu_1+\nu_2}(t) \le t(\log t)^{\nu_1 +\nu_2-1} + (\nu_1+\nu_2-1)!
\end{equation*}
So this integral is a $\mathcal{O}\left(\frac{X}{\log X}\right)$.

However,
\begin{multline*}
\int_2^{\sqrt{X}} \sum_{n\le t}\Lambda(n+2)\frac{(\nu_1+\nu_2-1)\Lambda_{\nu_1,\nu_2}(n)}{t(\log t)^{\nu_1+\nu_2}}dt  \\
\le \int_2^{\sqrt{X}} \log (t+2)\sum_{n\le t}\frac{(\nu_1+\nu_2-1)\Lambda_{\nu_1,\nu_2}(n)}{t(\log t)^{\nu_1+\nu_2}}dt 
\end{multline*}
and from Lemma \ref{lem29},
\begin{equation}
\sum_{n \le t}\Lambda_{\nu_1,\nu_2}(n) =\frac{2\mathscr{F}_{\nu_1+\nu_2}(t)}{(\nu_1+\nu_2-1)!} \left(1 + \mathcal{O}(\mathcal{L}^c))\right)
\end{equation}
with again $\mathscr{F}_{\nu_1+\nu_2}(t) \sim t(\log t)^{\nu_1 +\nu_2-1}$, this second integral is a $\mathcal{O}(\sqrt{X})$.

So now we can write
\begin{equation}
\sum_{n\le X} \Lambda(n+2)\frac{\Lambda_{\nu_1,\nu_2}(n)}{(\log n)^{\nu_1+\nu_2-1}} =  \frac{1}{(\log X)^{\nu_1+\nu_2-1}}\sum_{n\le X} \Lambda(n+2)\Lambda_{\nu_1,\nu_2}(n) + \mathcal{O}\left(\frac{X}{\log X}\right).
\end{equation}
To give an estimate of the main term, we use again Corollary \ref{cor}:
\begin{equation*}
\sum_{n\le X} \Lambda(n+2)\Lambda_{\nu_1,\nu_2}(n)= \frac{2 \mathfrak{S}_2 \mathscr{F}_{\nu_1+\nu_2}(X)}{(\nu_1+\nu_2-1)!}\left(1+\mathcal{O}\biggl(\frac{1}{\log X} +\nu_2^{2\nu_1+5\nu_2}\delta^{\frac{\nu_1}{2}}\biggr)\right)
\end{equation*}
and as $\mathscr{F}_{\nu_1+\nu_2}(X) \sim X(\log X)^{\nu_1 +\nu_2-1}$ and more precisely, from Lemma \ref{lemme26}
\begin{multline*}
\mathscr{F}_{\nu_1+\nu_2}(X) = X(\log X)^{\nu_1 +\nu_2-1} - (\nu_1 +\nu_2-1)\mathscr{F}_{\nu_1+\nu_2-1}(X) 
\\= X(\log X)^{\nu_1 +\nu_2-1} + \mathcal{O}(X (\log X)^{\nu_1+\nu_2-2}),
\end{multline*}
we get:
\begin{align*}
&\qquad \qquad\qquad\sum_{n\le X} \Lambda(n+2)\tilde{\Lambda}_{\nu_1,\nu_2}(n) \\
&=  \frac{2 \mathfrak{S}_2 \left( X(\log X)^{\nu_1 +\nu_2-1} + \mathcal{O}(X (\log X)^{\nu_1+\nu_2-2})\right)}{(\nu_1+\nu_2-1)!(\log X)^{\nu_1+\nu_2-1}}\left(1+\mathcal{O}\biggl(\frac{1}{\log X} +\nu_2^{2\nu_1+5\nu_2}\delta^{\frac{\nu_1}{2}}\biggr)\right)+ \mathcal{O}\left(\frac{X}{\log X}\right)\\
 &= \frac{2 \mathfrak{S}_2 X}{(\nu_1+\nu_2-1)!}\left(1+\mathcal{O}\biggl(\frac{1}{\log X} +\nu_2^{2\nu_1+5\nu_2}\delta^{\frac{\nu_1}{2}}\biggr)\right)
+ \mathcal{O}\left(\frac{X}{\log X}\right)\\
 &= \frac{2 \mathfrak{S}_2 X}{(\nu_1+\nu_2-1)!}\left(1+\mathcal{O}\biggl(\frac{1}{\log X} +\nu_2^{2\nu_1+5\nu_2}\delta^{\frac{\nu_1}{2}}\biggr)\right)\\
\end{align*}
i.e.
\begin{equation}
 \frac{2 \mathfrak{S}_2 X}{(\nu_1+\nu_2-1)!}\left(1+\mathcal{O}\biggl(\frac{1}{\log X} +\nu_2^{2\nu_1+5\nu_2}\delta^{\frac{\nu_1}{2}}\biggr)\right).
\end{equation}
Now 
 $\nu_2^{2\nu_1+5\nu_2}\delta^{\frac{\nu_1}{2}} = \mathcal{O}((\nu_1+\nu_2)^{5(\nu_1+\nu_2)}\delta^{\frac{1}{2}})$.
So our result can be stated this way:
\begin{thm}\label{thm3}
\begin{multline*}
\sum_{n\le X}  \frac{\Lambda(n+2)\Lambda(n)}{(\nu_1+\nu_2-1)!} + \sum_{n \le X} \Lambda(n+2)\frac{(\Lambda^{(\nu_1)}\star \Lambda^{(\nu_2)})(n)}{(\log n)^{\nu_1+\nu_2-1}} \\
=  \frac{2 \mathfrak{S}_2 X}{(\nu_1+\nu_2-1)!}\left( 1+\mathcal{O}( (\nu_1+\nu_2)^{5(\nu_1+\nu_2)}\delta^{\frac{1}{2}} +
\frac{1}{\log X})\right).
\end{multline*}
\end{thm}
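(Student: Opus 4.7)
The plan is to derive Theorem~\ref{thm3} from Corollary~\ref{cor} by partial summation that transfers the $(\log n)^{\nu_1+\nu_2-1}$ weight sitting inside $\Lambda_{\nu_1,\nu_2}$ out to a single factor $(\log X)^{\nu_1+\nu_2-1}$ in front of the whole sum. Concretely, I would start from the elementary identity
\begin{equation*}
\frac{1}{(\log n)^{\nu_1+\nu_2-1}} = \frac{1}{(\log X)^{\nu_1+\nu_2-1}} + \int_n^X \frac{\nu_1+\nu_2-1}{t(\log t)^{\nu_1+\nu_2}}\,dt,
\end{equation*}
substitute it into $\sum_{n\le X}\Lambda(n+2)\tilde\Lambda_{\nu_1,\nu_2}(n)$, and swap the order of summation and integration. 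This reduces the problem to two pieces: (i) a leading term $(\log X)^{-(\nu_1+\nu_2-1)}\sum_{n\le X}\Lambda(n+2)\Lambda_{\nu_1,\nu_2}(n)$ and (ii) an integral of the partial sum $\sum_{n\le t}\Lambda(n+2)\Lambda_{\nu_1,\nu_2}(n)$ against the smooth kernel $\frac{\nu_1+\nu_2-1}{t(\log t)^{\nu_1+\nu_2}}$.

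For piece (i), Corollary~\ref{cor} combined with Lemma~\ref{lemme26}'s expansion $\mathscr{F}_{\nu_1+\nu_2}(X)=X(\log X)^{\nu_1+\nu_2-1}+\mathcal{O}(X(\log X)^{\nu_1+\nu_2-2})$ yields directly the main term $2\mathfrak{S}_2 X/(\nu_1+\nu_2-1)!$ together with the advertised relative error. For piece (ii) I split the integral at $\sqrt X$. On $[\sqrt X,X]$ the assumption $\nu_1,\nu_2\le(\tfrac12\log X)^{1/10}$ still allows Corollary~\ref{cor} to apply uniformly in $t$, so the partial sum there is $\asymp \mathscr{F}_{\nu_1+\nu_2}(t)/(\nu_1+\nu_2-1)!\asymp t(\log t)^{\nu_1+\nu_2-1}/(\nu_1+\nu_2-1)!$; the integrand reduces to $\mathcal{O}(1/\log t)$ and the whole integral is $\mathcal{O}(X/\log X)$. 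On $[2,\sqrt X]$ the corollary is no longer in force, but a crude bound $\Lambda(n+2)\le\log(t+2)$ together with Lemma~\ref{lem29} for $\sum_{n\le t}\Lambda_{\nu_1,\nu_2}(n)$ gives a contribution of size $\mathcal{O}(\sqrt X)$, which is negligible.

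Adding the two estimates produces the identity up to a relative error $\mathcal{O}\!\left(\frac{1}{\log X}+\nu_2^{2\nu_1+5\nu_2}\delta^{\nu_1/2}\right)$ coming from Corollary~\ref{cor}. The last step is purely cosmetic: using $\nu_1\ge 1$ (so $\delta^{\nu_1/2}\le\delta^{1/2}$ when $\delta\le 1$) and $2\nu_1+5\nu_2\le 5(\nu_1+\nu_2)$, the quantity $\nu_2^{2\nu_1+5\nu_2}\delta^{\nu_1/2}$ is absorbed into $(\nu_1+\nu_2)^{5(\nu_1+\nu_2)}\delta^{1/2}$, matching the form stated in the theorem. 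I expect no genuine obstacle here: all the sieve-theoretic content sits in Corollary~\ref{cor}, and the theorem is essentially a re-packaging. The only mildly delicate point is checking that Corollary~\ref{cor} remains valid uniformly for $t\in[\sqrt X,X]$, but since its hypotheses on $\nu_1,\nu_2,\delta$ are monotone in $X$ this causes no difficulty.
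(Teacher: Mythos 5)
Your proposal is correct and follows essentially the same route as the paper: the same partial-summation identity transferring $(\log n)^{-(\nu_1+\nu_2-1)}$ to $(\log X)^{-(\nu_1+\nu_2-1)}$, the same split of the resulting integral at $\sqrt{X}$ with Corollary \ref{cor} on $[\sqrt{X},X]$ and Lemma \ref{lem29} plus the crude bound $\Lambda(n+2)\le\log(t+2)$ on $[2,\sqrt{X}]$, and the same final absorption of $\nu_2^{2\nu_1+5\nu_2}\delta^{\nu_1/2}$ into $(\nu_1+\nu_2)^{5(\nu_1+\nu_2)}\delta^{1/2}$. No substantive differences.
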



Multiplying both memebrs of the quality by $(\nu_1+\nu_2 -1)!$, this theorem becomes:
\begin{thm}\label{thm3bis}
For all $\nu_1,\nu_2\ge 1$,
\begin{multline*}
\sum_{n\le X}\Lambda(n+2)\Lambda(n) + \sum_{n \le X}\Lambda(n+2)\sum_{d_1 d_2 =n} \frac{\Lambda(d_1)\Lambda(d_2)}{\log n} f_{\nu_1,\nu_2}\left(\frac{\log d_1}{\log n}\right) \\
= 2 \mathfrak{S}_2 X\left( 1+\mathcal{O}( (\nu_1+\nu_2)^{5(\nu_1+\nu_2)}\delta^{\frac{1}{2}} +
\frac{1}{\log X})\right)
\end{multline*}

where
\begin{equation}
f_{\nu_1,\nu_2}(x) = (\nu_1+\nu_2-1) \binom{\nu_1+\nu_2 -2}{\nu_1-1} x^{\nu_1-1}(1-x)^{\nu_2-1}.
\end{equation}
\end{thm}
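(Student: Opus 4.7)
The plan is to derive Theorem \ref{thm3bis} from Theorem \ref{thm3} by a purely algebraic manipulation: multiply both sides of Theorem \ref{thm3} by $(\nu_1+\nu_2-1)!$ and then rewrite the convolution factor in a closed form. No new analytic input is needed; the work is entirely in translating the expression $(\Lambda^{(\nu_1)}\star \Lambda^{(\nu_2)})(n)/(\log n)^{\nu_1+\nu_2-1}$ into the Bernstein-like polynomial $f_{\nu_1,\nu_2}$ applied to the ratio $\log d_1/\log n$.

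First I would unpack the first term. Since $\Lambda^{(\nu_1+\nu_2)}(n) = \Lambda(n)(\log n)^{\nu_1+\nu_2-1}/(\nu_1+\nu_2-1)!$, the contribution of $\Lambda^{(\nu_1+\nu_2)}(n)/(\log n)^{\nu_1+\nu_2-1}$ is simply $\Lambda(n)/(\nu_1+\nu_2-1)!$, and multiplying by $(\nu_1+\nu_2-1)!$ recovers $\Lambda(n)\Lambda(n+2)$ in the first sum. On the right-hand side the factor $(\nu_1+\nu_2-1)!$ likewise cancels the one in the denominator of $2\mathfrak{S}_2 X/(\nu_1+\nu_2-1)!$.

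Next I would handle the convolution term. Expanding the definition gives
\begin{equation*}
\Lambda^{(\nu_1)}\star \Lambda^{(\nu_2)}(n) = \sum_{d_1 d_2 = n}\frac{\Lambda(d_1)(\log d_1)^{\nu_1-1}}{(\nu_1-1)!}\cdot\frac{\Lambda(d_2)(\log d_2)^{\nu_2-1}}{(\nu_2-1)!}.
\end{equation*}
Since $\Lambda(d_1)\Lambda(d_2)$ is supported on pairs of prime powers with $d_1 d_2 = n$, we have $\log d_1 + \log d_2 = \log n$, so writing $x = \log d_1/\log n$ we get $\log d_2 = (1-x)\log n$ and
\begin{equation*}
(\log d_1)^{\nu_1-1}(\log d_2)^{\nu_2-1} = x^{\nu_1-1}(1-x)^{\nu_2-1}(\log n)^{\nu_1+\nu_2-2}.
\end{equation*}
Dividing by $(\log n)^{\nu_1+\nu_2-1}$ and multiplying by $(\nu_1+\nu_2-1)!$ produces the coefficient $\frac{(\nu_1+\nu_2-1)!}{(\nu_1-1)!(\nu_2-1)!} = (\nu_1+\nu_2-1)\binom{\nu_1+\nu_2-2}{\nu_1-1}$ together with the factor $x^{\nu_1-1}(1-x)^{\nu_2-1}/\log n$. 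This is exactly $f_{\nu_1,\nu_2}(x)/\log n$, which recovers the convolution sum in the statement.

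Finally, I would note that the error term carries over unchanged: the factor $(\nu_1+\nu_2-1)!$ simplifies against the denominator on the right, yielding $2\mathfrak{S}_2 X$ times the same $\mathcal{O}\bigl((\nu_1+\nu_2)^{5(\nu_1+\nu_2)}\delta^{1/2} + 1/\log X\bigr)$. There is no genuine obstacle here; the one point worth verifying carefully is that the identity $\log d_1 + \log d_2 = \log n$ is used only on the support of $\Lambda(d_1)\Lambda(d_2)$, where both $d_1$ and $d_2$ exceed $1$, so no division by $\log 1 = 0$ occurs and $f_{\nu_1,\nu_2}(\log d_1/\log n)$ is well defined.
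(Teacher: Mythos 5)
Your proposal is correct and follows exactly the paper's own argument: Theorem \ref{thm3bis} is obtained from Theorem \ref{thm3} by multiplying through by $(\nu_1+\nu_2-1)!$ and rewriting the convolution factor via $\log d_2 = \log n - \log d_1$ together with $\frac{(\nu_1+\nu_2-1)!}{(\nu_1-1)!(\nu_2-1)!} = (\nu_1+\nu_2-1)\binom{\nu_1+\nu_2-2}{\nu_1-1}$. Your additional remark that the identity is only used on the support of $\Lambda(d_1)\Lambda(d_2)$ is a sensible (if implicit in the paper) sanity check.
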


Indeed, the convolution product can be written
\begin{multline*}
\sum_{d_1 d_2 =n} \frac{\Lambda(d_1)(\log d_1)^{\nu_1-1}}{(\nu_1-1)!}\frac{\Lambda(d_2)(\log d_2)^{\nu_2-1}}{(\nu_2-1)!}\frac{(\nu_1+\nu_2-1)!}{(\log n)^{\nu_1+\nu_2 -1}}\\
= \sum_{d_1 d_2 =n}(\nu_1+\nu_2-1) \binom{\nu_1+\nu_2 -2}{\nu_1-1} \left(\frac{\log d_1}{\log n}\right)^{\nu_1-1}\left(\frac{\log n - \log d_1}{\log n}\right)^{\nu_2-1} \frac{1}{\log n}.
\end{multline*}

We immediately deduce from Theorem \ref{thm3bis} the well known following result:
\begin{lem}\label{lemme}
\begin{equation*}
\sum_{n\le X}\Lambda(n+2)\Lambda(n) = \mathcal{O}(X)
\end{equation*}
and
\begin{equation*}
\sum_{n \le X}\Lambda(n+2)\sum_{d_1 d_2 =n} \frac{\Lambda(d_1)\Lambda(d_2)}{\log n} 
= \mathcal{O}(X).
\end{equation*}
\end{lem}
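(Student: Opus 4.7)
The plan is to specialise Theorem \ref{thm3bis} to $\nu_1=\nu_2=1$ and then exploit the non-negativity of the two summands appearing on its left-hand side. A direct computation gives
\begin{equation*}
f_{1,1}(x) = (1+1-1)\binom{0}{0} x^{0}(1-x)^{0} = 1
\end{equation*}
for every $x\in[0,1]$, so the weighted convolution term in Theorem \ref{thm3bis} reduces, for this choice, to the plain sum over $d_1 d_2 = n$.

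Before applying the theorem I would pick parameters meeting the hypotheses of Corollary \ref{cor}: with $\nu_1=\nu_2=1$ and, for instance, $\delta=(\log X)^{-1/3}$, the conditions $\delta>100/\sqrt{\log X}$, $\nu_2\le(\log X)^{1/10}$ and $\nu_2^2\delta\log(1/\delta)\le 1/8$ are all satisfied for $X$ sufficiently large, and the error factor $(\nu_1+\nu_2)^{5(\nu_1+\nu_2)}\delta^{1/2}+1/\log X$ is $o(1)$. Plugging these values into Theorem \ref{thm3bis} then yields
\begin{equation*}
\sum_{n\le X}\Lambda(n+2)\Lambda(n) + \sum_{n\le X}\Lambda(n+2)\sum_{d_1 d_2=n}\frac{\Lambda(d_1)\Lambda(d_2)}{\log n} = 2\mathfrak{S}_2 X \bigl(1+o(1)\bigr).
\end{equation*}

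To conclude, I would simply observe that $\Lambda$ is non-negative, so both $\Lambda(n+2)\Lambda(n)$ and the term $\Lambda(n+2)(\Lambda\star\Lambda)(n)/\log n$ are non-negative. Each of the two sums on the left is therefore bounded above by the right-hand side, which is $\mathcal{O}(X)$, giving both claims of the lemma at once. The entire argument is essentially immediate from Theorem \ref{thm3bis}; the only thing to check is the admissibility of the chosen parameters, which is routine, so no real obstacle arises.
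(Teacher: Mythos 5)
Your proof is correct and follows essentially the same route as the paper: both deduce the two bounds from Theorem \ref{thm3bis} combined with the non-negativity of the two summands on its left-hand side. The only (cosmetic) difference is that you recover the unweighted convolution sum via the single choice $f_{1,1}\equiv 1$, whereas the paper adds the two instances $f_{1,2}$ and $f_{2,1}$ and uses $f_{1,2}+f_{2,1}=2$; either choice produces the constant weight required.
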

\begin{proof}
From the previous theorem, the sum of those two positive terms being a $\mathcal{O}(X)$, we have $\sum_{n\le X}\Lambda(n+2)\Lambda(n) = \mathcal{O}(X)$ and for all $\nu_1$, $\nu_2$, $\sum_{n \le X}\Lambda(n+2)\sum_{d_1 d_2 =n} \frac{\Lambda(d_1)\Lambda(d_2)}{\log n}  f_{\nu_1,\nu_2}\left(\frac{\log d_1}{\log n}\right)
= \mathcal{O}(X)$.
We simply take $f_{1,2}$ and $f_{2,1}$. By definition, we have $f_{1,2}+f_{2,1}=2$ ($=\nu_1+\nu_2-1$).
\end{proof}

\subsection{Bernstein polynomials' appearance}
It is noticeable that
\begin{equation}
f_{\nu_1,\nu_2}(x) = (\nu_1+\nu_2-1) b_{\nu_1-1,\nu_1+\nu_2-2}(x)
\end{equation}
where $b_{k,m}$ is the Bernstein polyniomial: 
\begin{equation}
b_{k,m}= \binom{m}{k}x^k (1-x)^{m-k}.
\end{equation}
The most useful result on Bernstein polynomials for our goal (c.f. \cite{Phil} Theorem 7.1.5 ) is the following:
\begin{thm}\label{bernst}
Given a function $f$ $\in C[0,1]$, and any $\varepsilon>0$ there exists an integers $N$ such that 
\begin{equation*}
\vert (B_m(f;.))_{m \in \mathbb{N}} - f \vert \le \varepsilon
\end{equation*}
for all $n \ge N$, where
\begin{equation*}
 B_m(f; x) = \sum_{k=0}^m f\left(\frac{k}{m}\right) b_{k,m}(x) = \sum_{k=0}^m f\left(\frac{k}{m}\right) \binom{m}{k}x^k (1-x)^{m-k}.
\end{equation*}
\end{thm}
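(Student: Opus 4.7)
The plan is to carry out the classical probabilistic proof of the Bernstein approximation theorem. First I would record the three ``moment identities'' for the Bernstein basis, obtained directly from the binomial theorem applied to $(x+(1-x))^m$ and its first two derivatives:
\begin{equation*}
\sum_{k=0}^m b_{k,m}(x)=1,\qquad \sum_{k=0}^m \frac{k}{m}b_{k,m}(x)=x,\qquad \sum_{k=0}^m \Bigl(\frac{k}{m}-x\Bigr)^{\!2} b_{k,m}(x)=\frac{x(1-x)}{m}.
\end{equation*}
These are exactly the mean and variance of the normalized binomial variable $S_m/m$ with $S_m\sim\mathrm{Bin}(m,x)$, and they are the only probabilistic input required.

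Next, since $[0,1]$ is compact, $f$ is bounded (say $\|f\|_\infty\le M$) and uniformly continuous on $[0,1]$. Given $\varepsilon>0$, choose $\eta>0$ such that $|y-x|<\eta$ implies $|f(y)-f(x)|<\varepsilon/2$ for all $x,y\in[0,1]$. Using the first identity above, write
\begin{equation*}
B_m(f;x)-f(x)=\sum_{k=0}^m\Bigl(f\bigl(k/m\bigr)-f(x)\Bigr)\,b_{k,m}(x),
\end{equation*}
and split the sum according to whether $|k/m-x|<\eta$ or $|k/m-x|\ge\eta$. On the ``close'' part the increment $|f(k/m)-f(x)|$ is less than $\varepsilon/2$, and since $b_{k,m}(x)\ge 0$ and they sum to at most $1$, this contribution is bounded by $\varepsilon/2$.

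For the ``far'' part I would apply a Chebyshev-type bound: on the range $|k/m-x|\ge\eta$ one has $1\le (k/m-x)^2/\eta^2$, so by the second moment identity
\begin{equation*}
\sum_{|k/m-x|\ge\eta}b_{k,m}(x)\le \frac{1}{\eta^2}\sum_{k=0}^m\Bigl(\frac{k}{m}-x\Bigr)^{\!2}b_{k,m}(x)=\frac{x(1-x)}{m\eta^2}\le\frac{1}{4m\eta^2}.
\end{equation*}
Bounding $|f(k/m)-f(x)|\le 2M$ on this range, the far part contributes at most $M/(2m\eta^2)$, and this is less than $\varepsilon/2$ uniformly in $x$ as soon as $m\ge M/(\varepsilon\eta^2)$. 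Taking $N$ to be any such integer yields $\|B_m(f;\cdot)-f\|_\infty\le\varepsilon$ for all $m\ge N$, which is the claim. The only ``hard part'' is really the variance computation underlying the third identity; once that is in hand, the decomposition between near and far indices is completely routine.
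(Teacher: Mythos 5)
Your proof is correct and complete: it is the classical Chebyshev/second-moment argument for Bernstein's theorem, and the variance identity, the uniform-continuity split into $|k/m-x|<\eta$ and $|k/m-x|\ge\eta$, and the bound $x(1-x)\le 1/4$ are all used correctly. Note that the paper does not prove this statement at all — it simply cites it as Theorem 7.1.5 of the reference \cite{Phil} — and the proof given there is essentially the same standard argument you reconstructed, so there is nothing further to reconcile.
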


Another remarkable result is that for all $k$,
\begin{equation}
\int_0^1 b_{k,m}(t)dt = \frac{1}{m+1}.
\end{equation}
(c.f. \cite{Phil})
And so
\begin{equation}
\int_0^1 f_{\nu_1,\nu_2}(t)dt = 1.
\end{equation}
We define $\nu' = \nu_1+\nu_2 - 2$ and so $f_{\nu_1,\nu_2}(x) = f_{\nu_1,\nu'- \nu_1 +2}(x)= (\nu'+1) b_{\nu_1-1,\nu'}(x)$.

For all function $F \in C[0;1]$, we define
\begin{equation*}
F_{\nu'}(x) = \sum_{\nu_1=1}^{\nu'+1} \alpha_{\nu_1,\nu'+1}f_{\nu_1,\nu'- \nu_1 +2}(x)
\end{equation*}
where
\begin{equation*}
\alpha_{\nu_1,\nu'+1} = \frac{F\left(\frac{\nu_1-1}{\nu'}\right)}{\nu'+1}.
\end{equation*}
Thus, from Theorem \ref{bernst}, $(F_{\nu'})$ converges uniformly to $F$ on $[0,1]$.

\subsection{From Theorem \ref{thm3bis} to a theorem on twins almost primes}

\subsubsection{Definition of a "plateau" function and "peak" functions}\label{pics}
We want an infinitely differentiable function whose value in $0$ is $0$ and is $1$ in $1$ and which increases very quickly from  $0$ to $1$ (and whose derivative is zero in $0$ and $1$). For example, let $f_m$ be the function defined over $[0,1]$ by $f_m(x) = (4x(1-x))^m$ and $F_m(x) = \frac{1}{a_m}\int_0^x f_m(t) dt$ where 
$a_m = \int_0^1 f_m(t) dt$. (Note $a_m= 4^m\sum_{k=0}^m \frac{(-1)^k}{k+m+1}\binom{m}{k}$). The greater $m$ is, the quicker $F_m$ increases from $0$ to $1$, but the precise value of $m$ is not relevant for our resulst (we suppose $m\ge 10$ for example).

We now create a "plateau" function $F$ which is zero for all $x \le \beta$ and for all $x \ge \gamma$ and a non-zero constant on a interval close and inside of  $[\beta, \gamma]$ the following way:

First, let $h$ be a function such that
\begin{align*}
\left\lbrace
\begin{array}{l}
h(x) = 0 \qquad \qquad \qquad \quad  \text{on } [0, \beta] \\
h(x) =F_m \left(\frac{x-\beta}{\epsilon}\right)\qquad \quad \text{on } [ \beta, \beta + \epsilon]\\
h(x) = 1 \qquad  \qquad \qquad \quad \text{on } [\beta+\epsilon, \gamma-\epsilon]\\
h(x) = F_m \left(\frac{\gamma-x}{\epsilon}\right) \qquad \quad \text{on } [\gamma-\epsilon,\gamma]\\
h(x) = 0 \qquad \qquad \qquad \quad  \text{on } [\gamma,1]\\
\end{array}\right.
\end{align*}

We denote $\mathcal{A}_{\beta ,\gamma} = \int_0^1 h(t) dt$ and we define $F(x) = \frac{h(x)}{\mathcal{A}_{\beta ,\gamma}}$.
From the construction of $h$ we have,
\begin{equation*}
\mathcal{A}_{\beta ,\gamma} = \gamma-\beta +\mathcal{O}(\epsilon)
\end{equation*}
and so
\begin{equation}\label{Agamma}
\frac{1}{\mathcal{A}_{\beta ,\gamma} }= \frac{1}{\gamma-\beta} +\mathcal{O}(\epsilon).
\end{equation}

And thus the integral of $F$ over $[0,1]$ is exactly $1$ (see below an example of a "plateau" function).\\

\begin{center}
\includegraphics[scale=0.6]{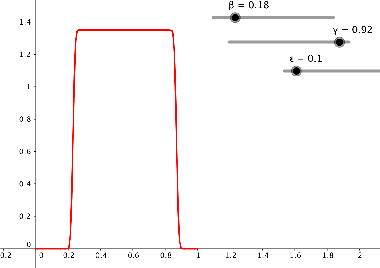}
\end{center}

We then define a "peak in $\beta$" function the following way:
\begin{align*}
\left\lbrace
\begin{array}{l}
F_{\beta}(x) = 0 \qquad \qquad \qquad \qquad \text{on } [0, \beta-\epsilon] \\
F_{\beta}(x) = \frac{1}{\mathcal{A}_{\beta ,\gamma}}F_m \left(\frac{x-\beta+ \epsilon}{\epsilon}\right)\quad \text{on } [ \beta-\epsilon, \beta]\\
F_{\beta}(x) = \frac{1}{\mathcal{A}_{\beta ,\gamma}}F_m \left(\frac{\beta+\epsilon-x}{\epsilon}\right) \quad \text{on } [\beta,\beta +\epsilon]\\
F_{\beta}(x) = 0 \qquad \qquad \qquad \qquad  \text{on } [\beta+\epsilon,1].\\
\end{array}\right.
\end{align*}
The function $F_\gamma$ ("peak in $\gamma$") is defined the same way. Those functions are "peaks" the same height as the "plateau" function defined above. All these functions are differentiables over $[0,1]$. See below an example of "peak in $\beta$" function. \\

\begin{center}
\includegraphics[scale=0.6]{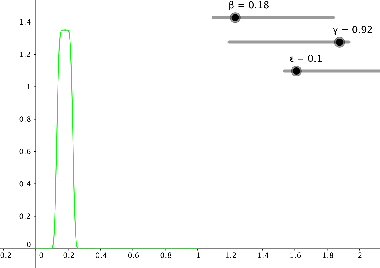}
\end{center}


\subsubsection{Theorem for a continuous function whose integral is $1$}

Let $F$ be a continuous function on $[0;1]$ such that $F(t) = 0$ for $ 0\le t \le \beta$ and for $ \gamma\le t \le 1$, where $0\le\beta<\gamma<1$, and such that
$\int_0^1 F(t)dt=1$.

We take $\varepsilon >0$. From what precedes, there exists an iteger $N$ such that for all $\nu' \ge N$ we have $ \vert F - F_{\nu'}\vert < \varepsilon$.

Furthermore, 
in Theorem \ref{thm2}, for all $\nu_1 \ge 1$, we have   $\vert \rho + \theta \vert < \varepsilon$.

Let us take $\nu'> N$,
so $F = F_{\nu'} + \mathcal{O}(\varepsilon)$. And from Theorem \ref{thm3bis},

\begin{align*}
\sum_{\nu_1=1}^{\nu'+1}\alpha_{\nu_1,\nu'+1} \sum_{n\le X} \Lambda(n)\Lambda(n+2) + \sum_{n \le X}{\Lambda(n+2)}\sum_{d_1 d_2 =n} \frac{\Lambda(d_1)\Lambda(d_2)}{\log n}\sum_{\nu_1=1}^{\nu'+1}\alpha_{\nu_1,\nu'+1} f_{\nu_1,\nu'-\nu_1+2}\left(\frac{\log d_1}{\log n}\right)\\
= \sum_{\nu_1=1}^{\nu'+1}\alpha_{\nu_1,\nu'+1} \sum_{n\le X} \Lambda(n)\Lambda(n+2) + \sum_{n \le X}{\Lambda(n+2)}\sum_{d_1 d_2 =n} \frac{\Lambda(d_1)\Lambda(d_2)}{\log n}F_{\nu'}\left(\frac{\log d_1}{\log n}\right)\\
= \sum_{\nu_1=1}^{\nu'+1}\alpha_{\nu_1,\nu'+1} 2 \mathfrak{S}_2 X\left( 1+\mathcal{O}((\nu_1+\nu_2)^{5(\nu_1+\nu_2)}\delta^{\frac{1}{2}}+
\frac{1}{\log X})\right).
\end{align*}

Now,
\begin{equation}
\sum_{\nu_1=1}^{\nu'+1}\alpha_{\nu_1,\nu'+1} = \sum_{\nu_1=1}^{\nu'+1}\frac{F\left(\frac{\nu_1-1}{\nu'}\right)}{\nu'+1}
\end{equation}
where
\begin{equation*}
\left|  \sum_{\nu_1=1}^{\nu'+1}\frac{F\left(\frac{\nu_1-1}{\nu'}\right)}{\nu'} - \int_0^1 F(t)dt \right| \le \frac{\sup_{t \in [0,1]}\vert F'(t)\vert}{\nu'}= \mathcal{O}\left(\frac{1}{\nu'}\right).
\end{equation*}
If we considered a function of bounded variations.

Note: when we take the "plateau" function defined above, the right hand memebr will be $\frac{1}{a_m\mathcal{A}_{\beta ,\gamma} \epsilon\nu'}$ because $F'(x) = \frac{1}{\mathcal{A}_{\beta ,\gamma} \epsilon}F_m'(x)$ and the maximum of $ F_m'(x)$ is $F_m'(1/2) = 1/a_m$.

So
\begin{equation}
\sum_{\nu_1=1}^{\nu'+1}\alpha_{\nu_1,\nu'+1} = \left( \frac{\nu'}{\nu'+1}\right)\left( \int_0^1 F(t)dt + \mathcal{O}(\frac{1}{\nu'})\right)
\end{equation}
and
\begin{equation}
\sum_{\nu_1=1}^{\nu'+1}\alpha_{\nu_1,\nu'+1} = 1 + \mathcal{O}(\frac{1}{\nu'}).
\end{equation}
 
And on the other hand, from Lemma \ref{lemme},
\begin{equation*}
\sum_{n\le X} {\Lambda(n+2)}\sum_{d_1 d_2 = n}\frac{\Lambda(d_1)\Lambda(d_2)}{\log n}= \mathcal{O}( X).
\end{equation*}
We want to approximate
\begin{align*}
&\sum_{n\le X} \Lambda(n)\Lambda(n+2) + \sum_{n \le X}{\Lambda(n+2)}\sum_{d_1 d_2 =n} \frac{\Lambda(d_1)\Lambda(d_2)}{\log n}F\left(\frac{\log d_1}{\log n}\right)\\
&= \sum_{n\le X} \Lambda(n)\Lambda(n+2) + \sum_{n \le X}{\Lambda(n+2)}\sum_{\substack{1\le d_1 \le n \\ d_1 \mid n}} \frac{\Lambda(d_1)\Lambda\left(\frac{n}{d_1}\right)}{\log n}F\left(\frac{\log d_1}{\log n}\right)\\
\end{align*}
where $F = F_{\nu'} + \mathcal{O}(\varepsilon)$, so the second term is equal to
\begin{align*}
 \sum_{n \le X}{\Lambda(n+2)}\sum_{\substack{1\le d_1 \le n \\ d_1 \mid n}} \frac{\Lambda(d_1)\Lambda\left(\frac{n}{d_1}\right)}{\log n}F_{\nu'}\left(\frac{\log d_1}{\log n}\right)
+ \sum_{n\le X} {\Lambda(n+2)}\sum_{d_1 d_2 = n}\frac{\Lambda(d_1)\Lambda(d_2)}{\log n}\mathcal{O}(\varepsilon).
\end{align*}
We get
\begin{align*}
&\sum_{\nu_1=1}^{\nu'+1}\alpha_{\nu_1,\nu'+1}\sum_{n\le X} \Lambda(n)\Lambda(n+2)+ \sum_{n \le X}{\Lambda(n+2)}\sum_{\substack{1\le d_1 \le n \\ d_1 \mid n}} \frac{\Lambda(d_1)\Lambda\left(\frac{n}{d_1}\right)}{\log n}F_{\nu'}\left(\frac{\log d_1}{\log n}\right)
+ \mathcal{O} (\varepsilon X)\\
&+ (1- \sum_{\nu_1=1}^{\nu'+1}\alpha_{\nu_1,\nu'+1})\sum_{n\le X} \Lambda(n)\Lambda(n+2).
\end{align*}
And from Theorem \ref{thm3bis}, this expression is equal to
\begin{align*}
&\sum_{\nu_1=1}^{\nu'+1}\alpha_{\nu_1,\nu'+1} 2 \mathfrak{S}_2 X\left( 1+\mathcal{O}( (\nu_1+\nu_2)^{5(\nu_1+\nu_2)}\delta^{\frac{1}{2}} 
+\frac{1}{\log X})\right)+\mathcal{O}(\varepsilon X) \\
&+ (1- \sum_{\nu_1=1}^{\nu'+1}\alpha_{\nu_1,\nu'+1})\sum_{n\le X} \Lambda(n)\Lambda(n+2)
\end{align*}
i.e.
\begin{align*}
& 2 \mathfrak{S}_2 X\left( 1+\mathcal{O}( (\nu_1+\nu_2)^{5(\nu_1+\nu_2)}\delta^{\frac{1}{2}} +\frac{1}{\log X})\right)\left(1 +  \mathcal{O}(\frac{1}{\nu'})\right)\\
&+\mathcal{O}(\varepsilon X) + \mathcal{O}( \frac{1}{\nu'})\sum_{n\le X} \Lambda(n)\Lambda(n+2).
\end{align*}
From Lemma \ref{lemme}
\begin{equation}
\sum_{n\le X} \Lambda(n)\Lambda(n+2) = \mathcal{O}(X)
\end{equation}
And $\nu' = \nu_1+\nu_2 -2$ so finally our theorem is
\begin{thm}
Given a continuous function $F$ with bounded variation on $[0;1]$ such that $F(t) = 0$ for $ 0\le t \le \beta$ and for $ \gamma\le t \le 1$, where $0\le \beta<\gamma<1$, satisfying hypothesis (\ref{H10}) and such that 
$\int_0^1 F(t)dt=1$.

For all $\varepsilon >0$ there exists an integer $N'$such that for all $\nu_1+\nu_2 \ge N'+2$,
\begin{align*}
&\sum_{n\le X} \Lambda(n)\Lambda(n+2) + \sum_{n \le X}{\Lambda(n+2)}\sum_{\substack{ d_1 d_2 =n \\ n^{\beta} \le d_1 \le n^{\gamma}}} \frac{\Lambda(d_1)\Lambda(d_2)}{\log n}F\left(\frac{\log d_1}{\log n}\right)\\
&= 2 \mathfrak{S}_2 X\left( 1+\mathcal{O}( \frac{1}{\nu_1+\nu_2-2}+ \varepsilon+(\nu_1+\nu_2)^{5(\nu_1+\nu_2)}\delta^{\frac{1}{2}}
+\frac{1}{\log X})\right).
\end{align*}
\end{thm}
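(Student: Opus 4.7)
My plan is to approximate $F$ uniformly on $[0,1]$ by its Bernstein expansion $F_{\nu'}$ of degree $\nu' = \nu_1 + \nu_2 - 2$, using the identification $f_{\nu_1,\nu_2}(x) = (\nu_1+\nu_2-1)\,b_{\nu_1-1,\nu_1+\nu_2-2}(x)$ with the standard Bernstein basis. This converts the $F$-weighted sum into a finite linear combination of the sums governed by Theorem \ref{thm3bis}, each with the same error rate since $\nu_1+\nu_2 = \nu'+2$ is held fixed as $\nu_1$ varies from $1$ to $\nu'+1$.

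First I fix $\varepsilon > 0$. By Theorem \ref{bernst}, there is $N'$ such that $\sup_{x\in[0,1]}|F(x)-F_{\nu'}(x)| < \varepsilon$ for every $\nu' \ge N'$. For any $\nu_1,\nu_2$ with $\nu_1+\nu_2 = \nu'+2 \ge N'+2$, I substitute $F = F_{\nu'} + \mathcal{O}(\varepsilon)$ inside the sum on the left-hand side of the claim. The $\mathcal{O}(\varepsilon)$ contribution is at most $\varepsilon$ times $\sum_{n\le X}\Lambda(n+2)\sum_{d_1d_2=n}\Lambda(d_1)\Lambda(d_2)/\log n$, which Lemma \ref{lemme} bounds by $\mathcal{O}(X)$, producing an overall error of $\mathcal{O}(\varepsilon X)$. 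The support hypothesis $F\equiv 0$ on $[0,\beta]\cup[\gamma,1]$ automatically enforces the restriction $n^\beta\le d_1\le n^\gamma$ in the statement.

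Next I expand $F_{\nu'} = \sum_{\nu_1=1}^{\nu'+1}\alpha_{\nu_1,\nu'+1}f_{\nu_1,\nu'-\nu_1+2}$ by linearity and apply Theorem \ref{thm3bis} to each resulting inner sum. Since $\nu_1+(\nu'-\nu_1+2) = \nu'+2$ is fixed, the error in Theorem \ref{thm3bis} is uniform in $\nu_1$, of size $\mathcal{O}((\nu_1+\nu_2)^{5(\nu_1+\nu_2)}\delta^{1/2} + 1/\log X)$. Writing $A := \sum_{n\le X}\Lambda(n)\Lambda(n+2)$, each application replaces its inner sum by $2\mathfrak{S}_2 X - A$ plus that error. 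Summing with weights $\alpha_{\nu_1,\nu'+1}$ and using the Riemann-sum estimate $\sum_{\nu_1}\alpha_{\nu_1,\nu'+1} = 1 + \mathcal{O}(1/\nu')$ (which follows from $\int_0^1 F = 1$ and the bounded variation of $F$), then reinserting the $A$ kept outside the expansion, the left-hand side of the claim equals $A + (1+\mathcal{O}(1/\nu'))(2\mathfrak{S}_2 X - A) + \mathcal{O}(\text{Thm \ref{thm3bis} error}) + \mathcal{O}(\varepsilon X)$. The $A$ terms combine into $2\mathfrak{S}_2 X + \mathcal{O}(A/\nu')$, and Lemma \ref{lemme} bounds $A/\nu' = \mathcal{O}(X/(\nu_1+\nu_2-2))$, yielding the stated asymptotic.

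The main obstacle I foresee is the bookkeeping of the four error terms: one must check that $\sum_{\nu_1}|\alpha_{\nu_1,\nu'+1}|$ stays $\mathcal{O}(1)$ (it is bounded by $\|F\|_\infty$, since it is a Riemann sum for $\int_0^1|F|$), so that the per-term error from Theorem \ref{thm3bis} is not amplified by a factor $\nu'$ after summation. Once this uniformity is in hand, everything else is routine substitution, and the four surviving contributions $\varepsilon$, $1/(\nu_1+\nu_2-2)$, $(\nu_1+\nu_2)^{5(\nu_1+\nu_2)}\delta^{1/2}$, and $1/\log X$ combine into the announced error.
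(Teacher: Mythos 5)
Your proposal is correct and follows essentially the same route as the paper: Bernstein approximation of $F$ by $F_{\nu'}$ with $\nu'=\nu_1+\nu_2-2$, linear expansion over the $f_{\nu_1,\nu'-\nu_1+2}$, application of Theorem \ref{thm3bis} to each term, the Riemann-sum estimate $\sum_{\nu_1}\alpha_{\nu_1,\nu'+1}=1+\mathcal{O}(1/\nu')$, and Lemma \ref{lemme} to absorb the $\mathcal{O}(\varepsilon)$ and $\mathcal{O}(1/\nu')$ multiples of the two base sums. Your explicit check that $\sum_{\nu_1}\vert\alpha_{\nu_1,\nu'+1}\vert=\mathcal{O}(1)$, so the per-term error of Theorem \ref{thm3bis} is not amplified by the summation over $\nu_1$, is a point the paper leaves implicit but is handled identically in substance.
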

And we deduce the following corollary
\begin{cor}\label{cor2}
Given a continuous function $F$ with bounded variation on $[0;1]$ such that $F(t) = 0$ for $ 0\le t \le \beta$ and for $ \gamma\le t \le 1$, where $0\le \beta<\gamma<1$, satisfying hypothesis (\ref{H10}) and such that 
$\int_0^1 F(t)dt=\mathcal{A}$.

For all $\varepsilon >0$ there exists an integer $N'$ such that for all $\nu_1+\nu_2 \ge N'+2$,
\begin{align*}
&\mathcal{A}\sum_{n\le X} \Lambda(n)\Lambda(n+2) + \sum_{n \le X}{\Lambda(n+2)}\sum_{\substack{ d_1 d_2 =n \\ n^{\beta} \le d_1 \le n^{\gamma}}} \frac{\Lambda(d_1)\Lambda(d_2)}{\log n}F\left(\frac{\log d_1}{\log n}\right)\\
&= 2 \mathcal{A} \mathfrak{S}_2 X\left( 1+\mathcal{O}( \frac{1}{\nu_1+\nu_2-2}+ \varepsilon+(\nu_1+\nu_2)^{5(\nu_1+\nu_2)}\delta^{\frac{1}{2}}
+\frac{1}{\log X})\right).
\end{align*}
\end{cor}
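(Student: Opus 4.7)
The plan is to reduce Corollary \ref{cor2} directly to the preceding theorem by a simple rescaling argument. The only difference between the two statements is that the previous theorem requires $\int_0^1 F(t)\,dt = 1$, whereas here we allow $\int_0^1 F(t)\,dt = \mathcal{A}$ for some (nonzero) constant $\mathcal{A}$. Since both sides of the claimed identity scale linearly in the common normalizing constant in front of $F$, the natural move is to pass to $G = F/\mathcal{A}$ and cite the previous theorem verbatim.

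More concretely, I would first assume $\mathcal{A} \neq 0$ (if $\mathcal{A} = 0$ the left-hand side becomes the second sum alone, $2\mathcal{A}\mathfrak{S}_2 X = 0$, and the statement is a degenerate consequence of the triangle inequality applied to the theorem for a plateau function approximating $F$ from each sign separately). Define
\begin{equation*}
G(t) = \frac{F(t)}{\mathcal{A}} \qquad (t \in [0,1]).
\end{equation*}
I then check the hypotheses of the previous theorem for $G$: continuity and bounded variation on $[0,1]$ are inherited from $F$ upon division by the constant $\mathcal{A}$; the vanishing conditions $G(t)=0$ on $[0,\beta]$ and $[\gamma,1]$ are identical to those of $F$; the normalization $\int_0^1 G(t)\,dt = 1$ holds by construction; and the Elliott--Halberstam-type hypothesis ($H_{10}$) is a hypothesis on the arithmetic setup, unaffected by the rescaling.

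With $G$ satisfying all required conditions, the previous theorem yields, for every $\varepsilon'>0$, an integer $N'$ such that for all $\nu_1+\nu_2 \geq N'+2$,
\begin{align*}
&\sum_{n\le X} \Lambda(n)\Lambda(n+2) + \sum_{n \le X}\Lambda(n+2)\sum_{\substack{ d_1 d_2 =n \\ n^{\beta} \le d_1 \le n^{\gamma}}} \frac{\Lambda(d_1)\Lambda(d_2)}{\log n}G\!\left(\frac{\log d_1}{\log n}\right)\\
&= 2\mathfrak{S}_2 X\left(1+\mathcal{O}\!\left(\tfrac{1}{\nu_1+\nu_2-2}+\varepsilon'+(\nu_1+\nu_2)^{5(\nu_1+\nu_2)}\delta^{1/2}+\tfrac{1}{\log X}\right)\right).
\end{align*}
Multiplying both sides by $\mathcal{A}$ and using $\mathcal{A}\,G(\tfrac{\log d_1}{\log n}) = F(\tfrac{\log d_1}{\log n})$ in the second sum rewrites the left-hand side as exactly the expression in Corollary \ref{cor2}, and the right-hand side as $2\mathcal{A}\mathfrak{S}_2 X(1+\mathcal{O}(\cdots))$, with the same error term. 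Choosing $\varepsilon' = \varepsilon$ completes the argument.

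There is no real obstacle: the only thing to be careful about is that the error-term factor $\mathcal{A}$ that appears when multiplying through gets absorbed into the $\mathcal{O}$-constant (which is permitted, since $\mathcal{A}$ depends only on $F$, i.e. on the fixed data of the corollary, not on $X$, $\delta$, or $\nu_1,\nu_2$). Thus the bookkeeping is essentially trivial and the corollary follows from one application of the theorem to $F/\mathcal{A}$.
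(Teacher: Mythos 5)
Your rescaling argument is correct and is essentially the deduction the paper intends (the paper states the corollary with only ``we deduce'', and its subsequent remark that $\sum_{\nu_1=1}^{\nu'+1}\alpha_{\nu_1,\nu'+1}=\mathcal{A}(1+\mathcal{O}(1/\nu'))$ amounts to the same linearity observation you exploit via $G=F/\mathcal{A}$). The only point worth noting is that the $\mathcal{A}=0$ case needs the slightly different treatment you sketch, but in the paper's actual applications $\mathcal{A}>0$, so nothing is lost.
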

In the following calculations, when using this corollary  
we will still use the notation $\sum_{\nu_1=1}^{\nu'+1}\alpha_{\nu_1,\nu'+1} = \mathcal{A}(1 + \mathcal{O}(\frac{1}{\nu'}))$.

\subsubsection{Towards the final asymptotic}

We now consider the "almost indicator function" of the interval $[\beta, \gamma]$, that we denote by $1^*_{[\beta,\gamma]}$, which is zero on $[0,\beta]$ and on $[\gamma,1]$ and constantly equal to$\frac{1}{\mathcal{A}_{\beta ,\gamma}}$ on $[\beta, \gamma]$. We then have $\vert 1_{[\beta,\gamma]} - F\vert \le F_{\beta} + F_{\gamma}$ where $F$,  $ F_{\beta}$ and $ F_{\gamma}$ are the "pleateau" and "peak" functions defined previously.

We use Theorem \ref{thm3bis} and Corollary \ref{cor2} with the functions $F$ and  $G= F_{\beta} + F_{\gamma}$ the following way:

We denote 
\begin{equation*}
\alpha'_{\nu_1,\nu'+1} = \frac{G\left(\frac{\nu_1-1}{\nu'}\right)}{\nu'+1}
\end{equation*}
and the corresponding $G_{\nu'}$ similar to $F_{\nu'}$. We notice that
\begin{align*}
\sum_{\nu_1=1}^{\nu'+1}\alpha'_{\nu_1,\nu'+1}\le \frac{\nu'}{\nu'+1} \left(\int_0^1 (F_{\beta}+ F_{\gamma})(t)dt  + \frac{\sup_{t \in [0,1]}\vert (F_{\beta}+F_{\gamma})'(t)\vert}{\nu'}\right)\\
= \frac{\nu'}{\nu'+1} \left( \frac{4\epsilon}{\mathcal{A}_{\beta ,\gamma}} +  \mathcal{O}(\frac{1}{\nu'})\right) =\mathcal{O}(\epsilon+\frac{1}{\nu'}).
\end{align*}
Theorem \ref{thm3bis} will be used with the function $G_{\nu'}$ so we have:
\begin{align}
\sum_{\nu_1=1}^{\nu'+1}\alpha'_{\nu_1,\nu'+1} \sum_{n\le X} \Lambda(n)\Lambda(n+2) + \sum_{n \le X}{\Lambda(n+2)}\sum_{d_1 d_2 =n} \frac{\Lambda(d_1)\Lambda(d_2)}{\log n}G_{\nu'}\left(\frac{\log d_1}{\log n}\right)\\
\notag = \sum_{\nu_1=1}^{\nu'+1}\alpha'_{\nu_1,\nu'+1} 2 \mathfrak{S}_2 X\left( 1+\mathcal{O}( (\nu_1+\nu_2)^{5(\nu_1+\nu_2)}\delta^{\frac{1}{2}} +
\frac{1}{\log X})\right).
\end{align}
And, recalling our work with $F_{\nu}$, 
we will have in a similar way $G = G_{\nu'}+\varepsilon$ from some positive integer $N'$,and
\begin{equation}
1_{[\beta,\gamma]} = F + G + \mathcal{O}(\epsilon)
\end{equation}
so:
\begin{align*}
&\sum_{n\le X} \Lambda(n)\Lambda(n+2) + \sum_{n \le X}{\Lambda(n+2)}\sum_{d_1 d_2 =n} \frac{\Lambda(d_1)\Lambda(d_2)}{\log n}1_{[\beta,\gamma]}\left(\frac{\log d_1}{\log n}\right) \\
& = \sum_{n\le X} \Lambda(n)\Lambda(n+2) + \sum_{n \le X}{\Lambda(n+2)}\sum_{d_1 d_2 =n} \frac{\Lambda(d_1)\Lambda(d_2)}{\log n}(F + G+ \mathcal{O}(\epsilon))\left(\frac{\log d_1}{\log n}\right)\\
\end{align*} 
which is equal to
 \begin{align*}
 &= \sum_{\nu_1=1}^{\nu'+1}\alpha_{\nu_1,\nu'+1}\sum_{n\le X} \Lambda(n)\Lambda(n+2)+ \sum_{n \le X}{\Lambda(n+2)}\sum_{\substack{1\le d_1 \le n \\ d_1 \mid n}} \frac{\Lambda(d_1)\Lambda\left(\frac{n}{d_1}\right)}{\log n}F_{\nu'}\left(\frac{\log d_1}{\log n}\right)\\
&\qquad \qquad+ \mathcal{O} (\varepsilon X)
+ (1- \sum_{\nu_1=1}^{\nu'+1}\alpha_{\nu_1,\nu'+1})\sum_{n\le X} \Lambda(n)\Lambda(n+2)\\
&+ \sum_{n \le X}{\Lambda(n+2)}\sum_{d_1 d_2 =n} \frac{\Lambda(d_1)\Lambda(d_2)}{\log n}G_{\nu'}\left(\frac{\log d_1}{\log n}\right) 
+\mathcal{O} (\varepsilon X)+ \mathcal{O}(\epsilon X).\\
\end{align*}
We introduce the same way the coefficients $\alpha'_{\nu_1,\nu'+1} $, the two last terms become:
\begin{align*}
 & \sum_{\nu_1=1}^{\nu'+1}\alpha'_{\nu_1,\nu'+1}\sum_{n\le X} \Lambda(n)\Lambda(n+2)+ \sum_{n \le X}{\Lambda(n+2)}\sum_{\substack{1\le d_1 \le n \\ d_1 \mid n}} \frac{\Lambda(d_1)\Lambda\left(\frac{n}{d_1}\right)}{\log n}G_{\nu'}\left(\frac{\log d_1}{\log n}\right)\\
 &+ (1- \sum_{\nu_1=1}^{\nu'+1}\alpha_{\nu_1,\nu'+1} -  \sum_{\nu_1=1}^{\nu'+1}\alpha'_{\nu_1,\nu'+1} )\sum_{n\le X} \Lambda(n)\Lambda(n+2)+\mathcal{O} \left((\varepsilon + \epsilon)X\right). 
\end{align*}
But 
\begin{equation}
(1- \sum_{\nu_1=1}^{\nu'+1}\alpha_{\nu_1,\nu'+1} -  \sum_{\nu_1=1}^{\nu'+1}\alpha'_{\nu_1,\nu'+1} )\sum_{n\le X} \Lambda(n)\Lambda(n+2)
= \mathcal{O}\left((\epsilon+\frac{1}{\nu'})X\right).
\end{equation}

And applying Theorem \ref{thm3bis} and Corollary \ref{cor2} with the functions $F_{\nu'}$ and $G_{\nu'}$ we get:
\begin{multline*}
\sum_{n\le X} \Lambda(n)\Lambda(n+2) + \sum_{n \le X}{\Lambda(n+2)}\sum_{d_1 d_2 =n} \frac{\Lambda(d_1)\Lambda(d_2)}{\log n}1_{[\beta,\gamma]}\left(\frac{\log d_1}{\log n}\right)\\
= 2 \mathfrak{S}_2 X\left( 1+\mathcal{O}( \frac{1}{\nu_1+\nu_2-2}+\epsilon+ \varepsilon+(\nu_1+\nu_2)^{5(\nu_1+\nu_2)}\delta^{\frac{1}{2}}
+\frac{1}{\log X})\right).
\end{multline*}
And so, by definition of the function $1_{[\beta,\gamma]}$, we get
\begin{align}\label{final}
\notag \sum_{n\le X} \Lambda(n)\Lambda(n+2) +\frac{1}{\mathcal{A}_{\beta ,\gamma}} \sum_{n \le X}\Lambda(n+2)\sum_{\substack{ d_1 d_2 =n \\ n^{\beta} \le d_1 \le n^{\gamma}}} \frac{\Lambda(d_1)\Lambda(d_2)}{\log n} \\
= 2 \mathfrak{S}_2 X\left( 1+\mathcal{O}( \frac{1}{\nu_1+\nu_2-2}+\epsilon+ \varepsilon+(\nu_1+\nu_2)^{5(\nu_1+\nu_2)}\delta^{\frac{1}{2}}
+\frac{1}{\log X})\right).
\end{align}.

\subsubsection{Final Theorem}\label{thmfinal}
We set up the parameters for our result.

Let $\varepsilon>0$ be any real number.
We take $\epsilon=\varepsilon$ in the definitions of \ref{pics}. And we take $\nu= \max(N, \frac{1}{\varepsilon})$, where $N$ is such that
 for all $\nu'\ge N$, $\vert F- F_{\nu'}\vert<\varepsilon$ in Theorem (\ref{bernst}).

We take $\delta$ such that $(\nu_1+\nu_2)^{5(\nu_1+\nu_2)}\delta^{1/2} < \varepsilon$ where $\nu_1+\nu_2 -2 = \nu'>\nu$.
i.e.
\begin{equation}
\delta < \frac{\varepsilon^2}{\left(\nu+2\right)^{10\left(\nu+2\right)}}.
\end{equation}


If we choose $X_0= e^{\frac{10000}{\delta}}$, we have, for all $X\ge X_0$,
\begin{equation*}
\frac{1}{\log X}<\varepsilon \qquad\qquad \text{et} \qquad \qquad \delta >\frac{100}{\sqrt{\log X}}.
\end{equation*}.

Note: The conjecture \ref{conje} that became our hypothesis $(H_{10})$ used with $\delta$ above (and, as we recall, $\theta= 5(\nu_1+ \nu_2)^2+5$) gives us the existence of a constant $A_0$ depending only on $\varepsilon$ and from which we deduce $A''$ which is a $ \mathcal{O}(1/\log X)$.

Thus, in the identity (\ref{final}), the right and term is $2 \mathfrak{S}_2 X \left( 1 + \mathcal{O}(\varepsilon)\right)$.

Furthermore, from (\ref{Agamma}), we replace $\frac{1}{\mathcal{A}_{\beta ,\gamma}}$ with $\frac{1}{\gamma-\beta}+ \mathcal{O}(\varepsilon)$ and use Lemma \ref{lemme}: 
\begin{equation*}
\mathcal{O}(\varepsilon)\sum_{n \le X}\Lambda(n+2)\sum_{\substack{ d_1 d_2 =n \\ n^{\beta} \le d_1 \le n^{\gamma}}} \frac{\Lambda(d_1)\Lambda(d_2)}{\log n} 
= \mathcal{O}(\varepsilon X).
\end{equation*}.
 
So we get the following theorem

\begin{thm}\label{resultat}
Under the conjecture \ref{conje}, 
given any $\varepsilon>0$, and any $\beta\ge 0$ and $\gamma>\beta$, there exists a real number $X_0$ such that, for all $X\ge X_0$,
\begin{align*}
\sum_{n\le X} \Lambda(n)\Lambda(n+2) +\frac{1}{\gamma-\beta} \sum_{n \le X}\Lambda(n+2)\sum_{\substack{ d_1 d_2 =n \\ n^{\beta} \le d_1 \le n^{\gamma}}} \frac{\Lambda(d_1)\Lambda(d_2)}{\log n} \\
= 2 \mathfrak{S}_2 X\left( 1+\mathcal{O}(\varepsilon)\right).
\end{align*}
\end{thm}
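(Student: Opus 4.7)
The plan is to reduce the statement to the chain of asymptotics already developed in the preceding section and then calibrate the parameters carefully. The starting point is Theorem \ref{thm3bis}, which handles a weighted sum in which the divisor $d_1$ contributes through the single Bernstein polynomial $f_{\nu_1,\nu_2}(\log d_1/\log n)$. By linearity this asymptotic extends (exactly as in Corollary \ref{cor2}) to any finite linear combination of Bernstein polynomials, and then by Theorem \ref{bernst} to an asymptotic formula involving any continuous function $F$ on $[0,1]$ of prescribed integral $\mathcal{A}$, with an additional error coming from the Bernstein approximation $|F-F_{\nu'}|\le \varepsilon$.

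Next I would approximate the normalised indicator $\mathbbm{1}_{[\beta,\gamma]}/\mathcal{A}_{\beta,\gamma}$ by the plateau function $F$ of subsection \ref{pics}, which is smooth, supported in $[\beta,\gamma]$, has plateau height $1/\mathcal{A}_{\beta,\gamma}$ and integrates to $1$. The pointwise gap $|\mathbbm{1}_{[\beta,\gamma]}/\mathcal{A}_{\beta,\gamma} - F|$ is dominated by $F_{\beta}+F_{\gamma}$, the two peak functions of width $\epsilon$ localised at the endpoints. Apply Corollary \ref{cor2} twice: once to $F$ (integral $1$), which contributes the main term $2\mathfrak{S}_2 X(1+o(1))$, and once to $G:=F_{\beta}+F_{\gamma}$ (integral $O(\epsilon)$), whose contribution is controlled by $O(\epsilon)$ times the positive sum $\sum_{n\le X}\Lambda(n+2)\sum_{d_1 d_2=n}\Lambda(d_1)\Lambda(d_2)/\log n = O(X)$, the latter being exactly Lemma \ref{lemme}. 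This yields equation \eqref{final}.

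It remains to pin down the four parameters $\epsilon,\nu',\delta,X_0$ so that each error term collapses to $O(\varepsilon)$. Given $\varepsilon>0$, first set the smoothing width $\epsilon=\varepsilon$; by \eqref{Agamma} this already gives $1/\mathcal{A}_{\beta,\gamma} = 1/(\gamma-\beta)+O(\varepsilon)$. Next choose the Bernstein degree $\nu_1+\nu_2-2=\nu'>\max(N,1/\varepsilon)$, where $N$ is large enough that both $F$ and $G$ are uniformly $\varepsilon$-approximated by their Bernstein polynomials. Then pick $\delta<\varepsilon^{2}(\nu'+2)^{-10(\nu'+2)}$ so that the factor $(\nu_1+\nu_2)^{5(\nu_1+\nu_2)}\delta^{1/2}$ appearing in Corollary \ref{cor} is below $\varepsilon$. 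Finally set $X_0=\exp(10000/\delta)$, which ensures simultaneously $1/\log X<\varepsilon$ and $\delta>100/\sqrt{\log X}$ for all $X\ge X_0$ — the latter being the entry hypothesis of Corollary \ref{cor}. Finally, the passage from $1/\mathcal{A}_{\beta,\gamma}$ to $1/(\gamma-\beta)$ inside the second sum costs an extra $O(\varepsilon)\cdot O(X)$ by Lemma \ref{lemme}, which is absorbed into the final $O(\varepsilon)$.

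The genuinely hard analytic work is already behind us in Theorem \ref{thm-crible} and Corollary \ref{cor}; the remaining difficulty is purely one of bookkeeping. The delicate point is the compatibility of the four parameters: sharpening the Bernstein approximation forces $\nu'$ to grow, and through the damaging factor $(\nu_1+\nu_2)^{5(\nu_1+\nu_2)}$ this forces $\delta$ to be super-exponentially small in $\nu'$, and hence $X_0$ enormous. One must verify that the ordering $\varepsilon \mapsto \epsilon \mapsto \nu' \mapsto \delta \mapsto X_0$ indeed drives every error in \eqref{final} — Bernstein, plateau, sieve, distribution and convergence errors alike — down to $O(\varepsilon)$ without any of them ever overpowering the main term $2\mathfrak{S}_2 X$.
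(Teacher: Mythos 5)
Your proposal follows the paper's own proof essentially step for step: Theorem \ref{thm3bis} extended by linearity and Bernstein approximation to Corollary \ref{cor2}, the plateau/peak decomposition with the gap bounded by $F_{\beta}+F_{\gamma}$ and absorbed via Lemma \ref{lemme}, leading to \eqref{final}, and then the identical parameter calibration $\epsilon=\varepsilon$, $\nu'=\max(N,1/\varepsilon)$, $\delta<\varepsilon^{2}(\nu+2)^{-10(\nu+2)}$, $X_0=e^{10000/\delta}$. The argument is correct and coincides with the paper's, including the final replacement of $1/\mathcal{A}_{\beta,\gamma}$ by $1/(\gamma-\beta)+\mathcal{O}(\varepsilon)$ at cost $\mathcal{O}(\varepsilon X)$.
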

In particular, if we take $\beta=0$ we get the following theorem
\begin{thm}
Under the conjecture \ref{conje}, given any $\varepsilon>0$, and any $\gamma$ where $0<\gamma<1$,  there exists a real number $X_0$ such that for all $X\ge X_0$,
\begin{align*}
\sum_{n\le X} \Lambda(n)\Lambda(n+2) +\frac{1}{\gamma} \sum_{n \le X}\Lambda(n+2)\sum_{\substack{ d_1 d_2 =n \\ d_1 \le n^{\gamma}}} \frac{\Lambda(d_1)\Lambda(d_2)}{\log n} \\
= 2 \mathfrak{S}_2 X\left( 1+\mathcal{O}( \varepsilon))\right).
\end{align*}
\end{thm}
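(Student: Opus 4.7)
The statement is the specialization $\beta = 0$ of the immediately preceding Theorem \ref{resultat}. My plan is simply to invoke that theorem with $\beta = 0$ and the given $\gamma \in (0,1)$. The hypothesis $\gamma > \beta$ is satisfied, and Conjecture \ref{conje} is assumed as in Theorem \ref{resultat}. Then I would observe that $n^{0} = 1$, so the inner summation condition $n^{\beta} \le d_1 \le n^{\gamma}$ collapses to $1 \le d_1 \le n^{\gamma}$; since every divisor $d_1$ of $n$ satisfies $d_1 \ge 1$, this is equivalent to $d_1 \le n^{\gamma}$. The normalizing factor $\tfrac{1}{\gamma - \beta}$ becomes $\tfrac{1}{\gamma}$. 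Both sides of the identity in Theorem \ref{resultat} thus reduce to the form claimed here, with the same $X_0$ (depending on $\varepsilon$ and $\gamma$) supplied by Theorem \ref{resultat}.

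If instead I wished to prove the result \emph{ab initio} without citing Theorem \ref{resultat}, the same plateau/peak machinery of Section \ref{pics} would still be used, but slightly simplified: I would build a single plateau function $F$ on $[0,1]$ equal to $\tfrac{1}{\gamma}$ on a large subinterval of $[0,\gamma]$ and vanishing on $[\gamma,1]$, normalized so $\int_0^1 F = 1$. Only one ``peak'' function $F_\gamma$ is needed (no peak at the left endpoint since $F$ may start from the very boundary $0$), so the approximation error from replacing the true indicator $1_{[0,\gamma]}$ by $F + F_{\gamma}$ is again $O(\epsilon)$. Bernstein approximation (Theorem \ref{bernst}) of $F$ by the sum $\sum_{\nu_1=1}^{\nu'+1}\alpha_{\nu_1,\nu'+1} f_{\nu_1,\nu'-\nu_1+2}$, combined with Theorem \ref{thm3bis} applied term by term and Corollary \ref{cor2}, would give the required asymptotic.

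\textbf{Main obstacle.} There is essentially no new difficulty at this step — the genuinely hard work has already been done in Theorem \ref{thm2} (the flexible Bombieri asymptotic sieve), in Corollary \ref{cor} (where the Elliott--Halberstam hypothesis \ref{H10} is used), and in the Bernstein-polynomial reduction leading to Theorem \ref{resultat}. The only residual task is parameter juggling: choose $\epsilon = \varepsilon$, take $\nu = \max(N, 1/\varepsilon)$ with $N$ from the Bernstein convergence threshold, pick $\delta$ small enough that $(\nu_1 + \nu_2)^{5(\nu_1+\nu_2)} \delta^{1/2} < \varepsilon$, and set $X_0 = e^{10000/\delta}$ as in \ref{thmfinal}, guaranteeing both $1/\log X < \varepsilon$ and $\delta > 100/\sqrt{\log X}$. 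All remainder terms collapse into a single $O(\varepsilon)$, yielding the stated conclusion.
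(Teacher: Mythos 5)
Your proposal is correct and matches the paper exactly: the paper obtains this theorem as the immediate specialization $\beta=0$ of Theorem \ref{resultat}, with the condition $n^{0}\le d_1$ being vacuous and $\tfrac{1}{\gamma-\beta}$ becoming $\tfrac{1}{\gamma}$. No further argument is needed.
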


We deduce the following corollary
\begin{cor}
Under the conjecture \ref{conje}, there exists an infinity of integers $n$ such that $n+2$ is prime and $n$ is the product of at most two prime powers, one of which being relatively small, i.e. lower than a small power of $n$ and we can get a precise asymptotic.
\end{cor}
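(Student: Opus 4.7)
My plan is to deduce this corollary directly from Theorem~\ref{resultat} (applied with $\beta=0$), exploiting the nonnegativity of every summand on the left-hand side. Fixing $\varepsilon\in(0,1/4)$, that theorem produces, for $X\ge X_0$, an identity whose right-hand side equals $2\mathfrak{S}_2 X(1+\mathcal{O}(\varepsilon))\gg X$; since each summand on the left is a product of nonnegative $\Lambda$-values, the identity is literally a log-weighted count over admissible integers $n$.

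The first step is to read off the arithmetic support of each contribution: $\Lambda(n)\Lambda(n+2)\neq 0$ forces $n$ and $n+2$ to be prime powers, while the convolution summand with $d_1 d_2=n$ and $d_1\le n^{\gamma}$ forces $n+2$ to be a prime power and $n$ to be a product of two prime powers, the smaller at most $n^{\gamma}$. The second step performs the prime-power removal mentioned by the author in Section~\ref{sanspuis}: one replaces $\Lambda$ by its prime-only variant $\Lambda_1(n)=(\log n)\,\mathbbm{1}_{n\text{ prime}}$. The discrepancies are confined to one of $n,n+2,d_1,d_2$ being a proper prime power and are controlled by the trivial bound $\sum_{p^k\le X,\,k\ge 2}\log p\ll X^{1/2}$ paired with a Brun--Titchmarsh estimate on the complementary shift; the total error is $\mathcal{O}(X^{1/2}(\log X)^{C})$ and is absorbed in the $\mathcal{O}(\varepsilon X)$ term of Theorem~\ref{resultat}. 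After this cleanup the identity becomes
\begin{equation*}
\sum_{\substack{n\le X\\ n,\,n+2\text{ prime}}}(\log n)\log(n+2)\;+\;\frac{1}{\gamma}\sum_{\substack{n=p_1p_2\le X\\ n+2\text{ prime},\;p_1\le n^{\gamma}}}\log(n+2)\,\frac{\log p_1\,\log p_2}{\log n}\;=\;2\mathfrak{S}_2 X\bigl(1+\mathcal{O}(\varepsilon)\bigr).
\end{equation*}

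Since every weight in the displayed identity is at most $(\log X)^2$, the number of admissible $n\le X$ is $\gg X/(\log X)^2$, hence infinite, and the weighted identity is the precise asymptotic claimed in the corollary. The only technical point, and the closest thing to an obstacle, is the uniform execution of the prime-power excision across the four possibilities above; none is genuinely difficult since each saves a full factor of $\sqrt{X}$ over the main term, which is exactly the purpose of the auxiliary Section~\ref{sanspuis}.
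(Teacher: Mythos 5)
Your deduction is correct and matches what the paper intends for this corollary: Theorem \ref{resultat} (or its $\beta=0$ specialisation) yields a nonnegative, log-weighted sum equal to $2\mathfrak{S}_2X(1+\mathcal{O}(\varepsilon))\gg X$ over exactly the admissible $n$, each weight is at most $(\log X)^2$, so the number of such $n\le X$ is $\gg X/(\log X)^2$ and in particular infinite, with the theorem itself supplying the precise asymptotic. One caveat: since the corollary explicitly permits $n$ to be a product of \emph{prime powers}, the only excision you actually need is for $n+2$ (cost $\mathcal{O}(\sqrt{X}\log^2X)$); the full prime-power removal you import from Section \ref{thmsanspuis} does \emph{not} cost only $\mathcal{O}(X^{1/2}(\log X)^{C})$ as you claim --- the configuration where $d_1$ is a proper prime power and $d_2$ a large prime requires the sieve bound of Lemma \ref{Halbrich} and yields only $\mathcal{O}(X/\log X)$, which is still $o(X)$ and therefore harmless here.
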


\subsubsection*{A particular case: a theorem on twins amlost prime with non localized variables}

\begin{thm}
Under the conjecture \ref{conje}, given any $\varepsilon>0$, there exists a real number $X_0$such that for all  $X\ge X_0$,
\begin{multline*}
\sum_{n\le X}\Lambda(n+2)\Lambda(n) + \sum_{n \le X}\Lambda(n+2)\sum_{d_1 d_2 =n} \frac{\Lambda(d_1)\Lambda(d_2)}{\log n} \\
= 2 \mathfrak{S}_2 X\left( 1+\mathcal{O}( \varepsilon)\right).
\end{multline*}
\end{thm}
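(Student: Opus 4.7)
The plan is to apply Theorem \ref{thm3bis} at the lowest admissible parameter values $\nu_1=\nu_2=1$. Recall the explicit Bernstein--polynomial weight
\[
f_{\nu_1,\nu_2}(x)=(\nu_1+\nu_2-1)\binom{\nu_1+\nu_2-2}{\nu_1-1}x^{\nu_1-1}(1-x)^{\nu_2-1},
\]
which collapses to $f_{1,1}(x)\equiv 1$ on $[0,1]$ when $\nu_1=\nu_2=1$. Substituting these values into Theorem \ref{thm3bis} therefore trivialises the weight and produces exactly the unweighted convolution $\sum_{d_1d_2=n}\Lambda(d_1)\Lambda(d_2)/\log n$ appearing in the present statement, with right-hand side $2\mathfrak{S}_2 X\bigl(1+\mathcal{O}((\nu_1+\nu_2)^{5(\nu_1+\nu_2)}\delta^{1/2}+1/\log X)\bigr)$.

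First I would confirm that the hypothesis chain leading to Theorem \ref{thm3bis} is available at $\nu_1=\nu_2=1$. The restrictions $\nu_1\le\nu_2\le(\log X)^{1/10}$ and $\nu_2^2\delta\log(1/\delta)\le 1/8$ from Corollary \ref{cor} reduce here to the single inequality $\delta\log(1/\delta)\le 1/8$, trivially satisfied once $\delta$ is small, and hypothesis (\ref{H10}) is used verbatim as in Section \ref{thmfinal}. Since Lemma \ref{lemme} already invokes Theorem \ref{thm3bis} with $(\nu_1,\nu_2)=(1,2)$, the further extension to $(1,1)$ amounts only to tracking numerical constants in the proof of Corollary \ref{cor}: every $\nu_2$-dependent factor becomes an absolute constant, so each intermediate inequality retains its validity after a harmless numerical adjustment.

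Next I would fix the parameters exactly as in Section \ref{thmfinal}. Given $\varepsilon>0$, pick an absolute constant $C$ bounding the implicit multiplicative factor in the error term above, then choose $\delta$ so that $C\delta^{1/2}<\varepsilon/2$ while $\delta\log(1/\delta)\le 1/8$, and set $X_0=\exp(10000/\delta)$. For every $X\ge X_0$ we then have $1/\log X<\varepsilon/(2C)$ and $\delta>100/\sqrt{\log X}$, and Theorem \ref{thm3bis} specialised at $\nu_1=\nu_2=1$ yields
\[
\sum_{n\le X}\Lambda(n+2)\Lambda(n)+\sum_{n\le X}\Lambda(n+2)\sum_{d_1d_2=n}\frac{\Lambda(d_1)\Lambda(d_2)}{\log n}=2\mathfrak{S}_2 X\bigl(1+\mathcal{O}(\varepsilon)\bigr),
\]
which is exactly the statement.

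The only genuine obstacle is this boundary verification at the endpoint $\nu_1=\nu_2=1$; the rest of the argument is mechanical and requires none of the Bernstein approximation, plateau--peak construction, or dyadic symmetry apparatus deployed for the localised theorem, because the weight $f_{1,1}$ is already the constant function $1$ and the two summands on the left-hand side are extracted directly from Theorem \ref{thm3bis}.
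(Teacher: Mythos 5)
Your argument is correct in substance and follows the same strategy as the paper --- specialise Theorem \ref{thm3bis} so that the Bernstein weight becomes the constant function $1$ and then fix $\delta$ and $X_0$ as in \ref{thmfinal} --- but the instantiation differs. You take the single diagonal point $(\nu_1,\nu_2)=(1,1)$, where $f_{1,1}\equiv 1$ directly; the paper instead applies Theorem \ref{thm3bis} at $(1,2)$ and at $(2,1)$, adds the two asymptotics using $f_{1,2}(x)+f_{2,1}(x)=2(1-x)+2x=2$, and divides by $2$. The two routes produce the same left-hand side and the same shape of error term (an absolute constant, $2^{10}$ versus $3^{15}$, times $\delta^{1/2}$, plus $1/\log X$). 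What the paper's choice buys is that it never has to touch the endpoint $\nu_1=\nu_2=1$: several steps in the machinery behind Theorem \ref{thm3bis} are written under assumptions that exclude it, notably the maximisation of $y\mapsto(\log y)^{\nu_2-1}(\log(X/y))^{\nu_1}$ in the sieve-removal section, which explicitly assumes $\nu_2\ge\nu_1+1$, and the comparison of $A''$ with $A'$ in the proof of Corollary \ref{cor}, which assumes $4\le\nu_2$. Moreover the parameter values $(1,2)$ and $(2,1)$ are exactly those already exercised in the proof of Lemma \ref{lemme}, so the paper reuses verified instances rather than a new one. You correctly identify the endpoint verification as the only obstacle, and your claim that it costs only absolute constants is right (for instance the maximum above becomes $\log X$ in place of $\tfrac12\log X$, which is absorbed by the $\mathcal{O}$), but a fully self-contained write-up along your lines would have to carry out that check, whereas the paper's two-point average makes it unnecessary.
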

\begin{proof}

We use Theorem \ref{thm3bis} with $f_{1,2}$ and $f_{2,1}$, we add up the two asymptotics and simplify by $2$.
Using the identity
\begin{multline*}
\sum_{n\le X}\Lambda(n+2)\Lambda(n) + \sum_{n \le X}\Lambda(n+2)\sum_{d_1 d_2 =n} \frac{\Lambda(d_1)\Lambda(d_2)}{\log n} \\
= 2 \mathfrak{S}_2 X\left( 1+\mathcal{O}( 3^{15}\delta^{\frac{1}{2}} +
\frac{1}{\log X})\right)
\end{multline*}
with $\delta$ and $X_0$ chosen the same as before.
\end{proof}

We deduce the following corollary
\begin{cor}
Under the conjecture \ref{conje}, there exists an infinity of integers $n$ such that $n+2$ is a prime and $n$ is the product of at most two prime powers, and we can get a precise asymptotic.
\end{cor}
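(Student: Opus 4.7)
The target statement is the ``unlocalized'' counterpart of Theorem \ref{resultat}: one asks for an asymptotic of the same shape but with the divisor sum $\sum_{d_1 d_2 = n}\Lambda(d_1)\Lambda(d_2)/\log n$ in full, with no restriction on $d_1$. The plan is to avoid invoking the Bernstein-polynomial machinery of \ref{pics} altogether, because the required identity already falls out of Theorem \ref{thm3bis} after a single linear combination, thanks to the algebraic identity
\[
f_{1,2}(x)+f_{2,1}(x) = 2(1-x) + 2x = 2 \qquad \text{on } [0,1].
\]
Indeed, a direct evaluation from the definition gives $f_{1,2}(x) = 2\binom{1}{0}(1-x) = 2(1-x)$ and $f_{2,1}(x) = 2\binom{1}{1}x = 2x$, so their average equals $1$ identically.

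The concrete plan is then as follows. First, apply Theorem \ref{thm3bis} with $(\nu_1,\nu_2) = (1,2)$ to obtain
\[
\sum_{n\le X}\Lambda(n+2)\Lambda(n) + \sum_{n\le X}\Lambda(n+2)\sum_{d_1 d_2 = n}\frac{\Lambda(d_1)\Lambda(d_2)}{\log n}f_{1,2}\!\left(\frac{\log d_1}{\log n}\right) = 2\mathfrak{S}_2 X\,(1+O(3^{15}\delta^{1/2}+1/\log X)),
\]
and then apply it again with $(\nu_1,\nu_2) = (2,1)$, which gives the analogous identity with $f_{2,1}$ in place of $f_{1,2}$ and the same error term (since the error depends only on $\nu_1+\nu_2=3$). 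Adding the two identities and dividing by $2$, the two divisor sums merge through $f_{1,2}+f_{2,1}=2$ into
\[
\sum_{n\le X}\Lambda(n+2)\sum_{d_1 d_2 = n}\frac{\Lambda(d_1)\Lambda(d_2)}{\log n},
\]
which is precisely the second term in the statement to be proved. The first term $\sum_{n\le X}\Lambda(n+2)\Lambda(n)$ appears in both instances of Theorem \ref{thm3bis} and is preserved by the averaging.

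It remains to select the parameters so that the error is $O(\varepsilon)$. As in \ref{thmfinal}, given $\varepsilon>0$ one takes $\delta>0$ small enough that $3^{15}\delta^{1/2}<\varepsilon/2$, while ensuring the structural constraints $\nu_2^2\delta\log(1/\delta)\le 1/8$ and $\delta > 100/\sqrt{\log X}$ required by Corollary \ref{cor}; both are easily satisfied since $\nu_2=2$ is fixed here. Then one chooses $X_0 = e^{10000/\delta}$, which forces $1/\log X < \varepsilon/2$ for $X\ge X_0$. The Elliott--Halberstam-type hypothesis (\ref{H10}) is invoked with $\theta = 5(\nu_1+\nu_2)^2+5 = 50$, supplying a constant $A_0 = A_0(\varepsilon)$ that makes the term $A''$ acceptable. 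Summing the two bounds yields the claimed $2\mathfrak{S}_2 X(1+O(\varepsilon))$.

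There is, in truth, no serious obstacle: all the real work has been absorbed into Theorem \ref{thm3bis}, and this corollary is a one-line averaging. The only mildly delicate point is tracking the explicit constants in the error: one should verify that using the same $\delta$ for both applications is legitimate (it is, since $\delta$ depends on $\nu_1+\nu_2$ and not on $\nu_1,\nu_2$ individually), and that the constant $3^{15}$ arising from $(\nu_1+\nu_2)^{5(\nu_1+\nu_2)}$ with $\nu_1+\nu_2 = 3$ is genuinely absorbed into the $O(\varepsilon)$ via the choice of $\delta$. This matches the parameter calibration used in the proof of Theorem \ref{resultat}.
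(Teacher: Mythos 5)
Your proposal is correct and follows exactly the paper's own route: the paper proves the unlocalized asymptotic by applying Theorem \ref{thm3bis} with $f_{1,2}$ and $f_{2,1}$, adding the two asymptotics and dividing by $2$ via the identity $f_{1,2}+f_{2,1}=2$, with $\delta$ and $X_0$ calibrated as in \ref{thmfinal}, and the corollary on the infinitude of such $n$ is then immediate since the main term $2\mathfrak{S}_2 X$ tends to infinity. Your parameter discussion and the computation of the constant $3^{15}$ match the paper's.
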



\subsection{Theorem without the prime powers}\label{thmsanspuis}
Our aim is to get a theorem as close as possible to the twim prime conjecure, so we want to "get rid of" the prime powers in our sum by estimating their contribution (suspecting that it will be negligible).

Let us take $\Lambda_1(n) = \log n$ if $n$ is a prime and $0$ otherwise. 
We want to prove that
\begin{align*}
\sum_{n\le X} \Lambda_1(n)\Lambda_1(n+2) +\frac{1}{\gamma-\beta} \sum_{n \le X}\Lambda_1(n+2)\sum_{\substack{ d_1 d_2 =n \\ n^{\beta} \le d_1 \le n^{\gamma}}} \frac{\Lambda_1(d_1)\Lambda_1(d_2)}{\log n} \\
= 2 \mathfrak{S}_2 X\left( 1+\mathcal{O}(\varepsilon)\right)
\end{align*}
We will prove below that indeed, the contribution of the prime powers will be a $\mathcal{O}(X/\log X)$.

Indeed, for the first term we have
\begin{multline*}
\sum_{n\le X} \Lambda_1(n)\Lambda_1(n+2)=
\sum_{n\le X} \Lambda(n)\Lambda(n+2) - \sum_{n\le X} (\Lambda-\Lambda_1)(n)\Lambda(n+2)\\
- \sum_{n\le X} \Lambda_1(n)(\Lambda-\Lambda_1)(n+2)
\end{multline*}
but
\begin{equation*}
\sum_{n\le X} (\Lambda-\Lambda_1)(n)\Lambda_1(n+2) \le \log (X+2)\sum_{n\le X} (\Lambda-\Lambda_1)(n)
\end{equation*}
and (c.f. \cite{Ten} for example)
\begin{equation*}
\sum_{n\le X} (\Lambda-\Lambda_1)(n) = \psi(X)-\theta(X)= \mathcal{O}(\sqrt{X})
\end{equation*}
($\psi$ and $\theta$ areTchebychev functions).
And also 
\begin{equation*}
\sum_{n\le X} \Lambda_1(n)(\Lambda-\Lambda_1)(n+2) \le \log (X)\sum_{n\le X} (\Lambda-\Lambda_1)(n+2)
\end{equation*}
We find that
\begin{equation}
\sum_{n\le X} \Lambda_1(n)\Lambda_1(n+2)=
\sum_{n\le X} \Lambda(n)\Lambda(n+2) + \mathcal{O}(\sqrt{X}\log X).
\end{equation}

For the second term, we use the following theorem
\begin{lem}\label{Halbrich}
Given $a$ a prime number or a prime power, 
the number of primes $p$ less than or equal to $X$ such that $a p +2$ is a prime is no more than $\mathfrak{C}\frac{X}{\log^2 X}$ where $\mathfrak{C}$ is a constant, independent of $X$ and $a$. (Consequently, this number is a $\mathcal{O}\left(\frac{X}{\log^2 X}\right)$).
\end{lem}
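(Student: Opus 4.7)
The plan is to apply a standard upper-bound combinatorial sieve (Selberg, or the linear sieve of Jurkat--Richert) to the shifted sequence $\mathcal{A}_a = \{ap+2 : p\le X,\ p\text{ prime}\}$. The number of primes in $\mathcal{A}_a$ is at most the number of its elements coprime to $P(z)=\prod_{q\le z}q$, plus the negligible contribution $O(z/\log z)$ from elements $\le z$. To begin with, if $a$ is even then every $ap+2$ is even and exceeds $2$, so the count is $0$ and the bound is trivial; henceforth assume $a=p_0^k$ for an odd prime $p_0$.

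Next I would compute the local densities. For any prime $q$ coprime to $2a$, the congruence $q\mid ap+2$ restricts $p$ to the single residue class $-2\bar a\pmod q$; by Siegel--Walfisz and, on average over $q\le X^{1/2-\varepsilon}$, by the Bombieri--Vinogradov theorem, one obtains
\[
|\{p\le X \text{ prime} : q\mid ap+2\}| = \frac{\pi(X)}{q-1}+r_q(X),
\]
with remainders $r_q(X)$ admissible for the sieve level. For $q\in\{2,p_0\}$ the density is $0$: $ap+2$ is odd when $a$ is odd, and $p_0\mid ap+2$ would force $p_0\mid 2$. Thus the sifting density is $\rho(q)=1/(q-1)$ for $q\nmid 2a$ and $\rho(q)=0$ otherwise.

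Then I would apply Selberg's sieve with level $D=X^{1/2-\varepsilon}$ and sift parameter $z=D^{1/2}$. The main term is $|\mathcal{A}_a|/G(z)$ with
\[
G(z) \;=\; \sum_{\substack{d\le\sqrt D,\ d\mid P(z)\\ (d,2a)=1}} \prod_{q\mid d}\frac{1}{q-2},
\]
and a classical Mertens-type computation (compare Halberstam--Richert, Theorem~3.12) shows $G(z)\ge c\log z \asymp \log X$ for an absolute constant $c>0$. The only $a$-dependence is through the omitted Euler factor at $p_0$, which contributes a multiplicative constant $(p_0-1)/(p_0-2)\le 2$ because $a$ is a prime power with exactly one prime divisor. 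Combined with $|\mathcal{A}_a|=\pi(X)\ll X/\log X$, this yields
\[
\#\{p\le X \text{ prime} : ap+2 \text{ prime}\} \;\ll\; \frac{X}{\log X}\cdot \frac{1}{\log X} \;=\; \frac{X}{\log^2 X},
\]
with an implied constant $\mathfrak{C}$ independent of $a$ and $X$, which is exactly the claim.

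The main obstacle is the uniformity of $\mathfrak{C}$ in $a$: the singular-series factor $\prod_{p\mid a,\ p>2}(p-1)/(p-2)$ could grow with the number of prime divisors of $a$, but the hypothesis that $a$ is a prime or a prime power confines it to a single factor bounded by $2$. The Bombieri--Vinogradov error terms are absorbed by the standard choice $D=X^{1/2-\varepsilon}$, and Siegel--Walfisz handles the small-$q$ contributions. No new ideas are required beyond the Halberstam--Richert upper bound specialised to the linear form $f(x)=ax+2$.
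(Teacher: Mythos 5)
Your proposal is correct and follows essentially the same route as the paper: the paper simply cites Theorem~3.12 of Halberstam--Richert (which is proved by exactly the Selberg-sieve argument you sketch) and, like you, observes that the only $a$-dependence is the single singular-series factor $(p_0-1)/(p_0-2)\le 2$ coming from the unique prime divisor of the prime power $a$, so the constant is uniform in $a$. The extra detail you give (local densities, sieve level, Bombieri--Vinogradov for the remainders) is a faithful sketch of how that cited theorem is established, not a different method.
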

\begin{proof}
This is Theorem $3.12$ from \cite{Halb} in the case $k=l=1$ with $b=2$ and $a$ a prime power.

We notice that $\prod_{\substack{p>2\\p\mid 2 a}}\frac{p-1}{p-2}$ contains only one factor for all prime power $a$, so this product is $\le 2$. Thus the constant $\mathfrak{C}$ is indeed independent of $X$ and $a$.
\end{proof}
So, we have
\begin{align*}
 \sum_{n \le X}\Lambda_1(n+2)\sum_{\substack{ d_1 d_2 =n \\ n^{\beta} \le d_1 \le n^{\gamma}}} \frac{\Lambda_1(d_1)\Lambda_1(d_2)}{\log n} 
& = \sum_{n \le X}\Lambda(n+2)\sum_{\substack{ d_1 d_2 =n \\ n^{\beta} \le d_1 \le n^{\gamma}}} \frac{\Lambda(d_1)\Lambda(d_2)}{\log n}\\
&- \sum_{n \le X}\Lambda(n+2)\sum_{\substack{ d_1 d_2 =n \\ n^{\beta} \le d_1 \le n^{\gamma}}} \frac{\Lambda(d_1)\Lambda(d_2) -\Lambda_1(d_1)\Lambda_1(d_2)}{\log n}\\
&- \sum_{n \le X}(\Lambda-\Lambda_1)(n+2)\sum_{\substack{ d_1 d_2 =n \\ n^{\beta} \le d_1 \le n^{\gamma}}} \frac{\Lambda_1(d_1)\Lambda_1(d_2)}{\log n}\\
& = \sum_{n \le X}\Lambda(n+2)\sum_{\substack{ d_1 d_2 =n \\ n^{\beta} \le d_1 \le n^{\gamma}}} \frac{\Lambda(d_1)\Lambda(d_2)}{\log n}
+ T_1 + T_2.
\end{align*}
In order to bound $\vert T_2 \vert$, we use the fact that $ \Lambda_1(d_1)\le \log X^{\gamma}=\gamma \log X$ and $ \Lambda_1(d_2)\le \log X$, and, for any $n$, the number of summands over $d_1$ and $d_2$ is no more than twice the number of prime factors in $n$ (as $d_1$ and $d_2$ are necessarily prime).
But, from \cite{Erd}, the number of prime factors of $n$ is no more than
\begin{equation*}
\frac{\log n}{\log \log n}(1+ o(1))\le 2\frac{\log n}{\log \log n}.
\end{equation*}
So we find that
\begin{align*}
\vert T_2\vert \le \sum_{n\le X} \frac{4\log n}{\log \log n} \times \frac{\gamma \log^2 X }{\log n}(\Lambda-\Lambda_1)(n+2)
\end{align*}
We saw earlier than $\sum_{n\le X} (\Lambda-\Lambda_1)(n) = \mathcal{O}(\sqrt{X})$.
So
\begin{equation}
\vert T_2\vert \le \mathcal{O}(\sqrt{X}\log^2 X)
\end{equation}
When bounding $\vert T_1\vert $, we notice that
\begin{multline*}
\Lambda(d_1)\Lambda(d_2) -\Lambda_1(d_1)\Lambda_1(d_2) 
 = \Lambda(d_2)(\Lambda-\Lambda_1)(d_1) + \Lambda_1(d_1)(\Lambda-\Lambda_1)(d_2)
\end{multline*}
and calculate
\begin{equation}
\sum_{X<n\le 2X}\Lambda_1(n+2)\sum_{\substack{d_1 d_2 =n \\ n^{\beta} \le d_1 \le n^{\gamma}}} \frac{\Lambda(d_2)(\Lambda-\Lambda_1)(d_1)}{\log n}.
\end{equation}
This sum is less than or equal to twice the sum:
\begin{align*}
\sum_{X<n\le 2X}\Lambda_1(n+2)\sum_{\substack{d_1 d_2 =n \\  d_1\le d_2}} \frac{\Lambda(d_2)(\Lambda-\Lambda_1)(d_1)}{\log n}\\
\le \frac{1}{\log X}\sum_{d_1\le \sqrt{X}}(\Lambda-\Lambda_1)(d_1)\sum_{\substack{X<n\le 2X\\n=d_1 d_2}}\Lambda_1(n+2)\Lambda(d_2).
\end{align*}
We have
\begin{equation}
\Lambda(d_2) = \Lambda_1(d_2)+(\Lambda-\Lambda_1)(d_2).
\end{equation}
The contribution of the second term ($\Lambda-\Lambda_1$) being no more than
\begin{align*}
 \frac{1}{\log X}\sum_{d_1\le \sqrt{X}}(\Lambda-\Lambda_1)(d_1)\sum_{\frac{X}{d_1}<d_2\le \frac{2X}{d_1}}(\Lambda-\Lambda_1)(d_2)\log (2X+2)\\
\ll \frac{1}{\log X} \sum_{d_1\le \sqrt{X}}(\Lambda-\Lambda_1)(d_1)\sqrt{\frac{2X}{d_1}}\log (2X +2)\\
\ll  \frac{1}{\log X}\sqrt{2X}\log (2X+2)\sqrt{\sqrt{X}}= \mathcal{O}(X^{3/4}).
\end{align*}
So we estimate
\begin{align*}
\frac{1}{\log X}\sum_{d_1\le \sqrt{X}}(\Lambda-\Lambda_1)(d_1)\sum_{\substack{X<n\le 2X\\n=d_1 d_2}}\Lambda_1(n+2)\Lambda_1(d_2)
\end{align*}
which is no more than
\begin{align*}
\frac{1}{\log X}\sum_{d_1\le \sqrt{X}}(\Lambda-\Lambda_1)(d_1)\sum_{\substack{p\in \left[\frac{X}{d_1};\frac{2X}{d_1}\right]\\ p \text{ prime}\\ d_1 p +2 \text{ prime} }}\log(X+2)\log X\\
\le \frac{1}{\log X}\sum_{d_1\le \sqrt{X}}(\Lambda-\Lambda_1)(d_1)\log(X+2)\log X\times\mathcal{O}\left(\frac{X/d_1}{\log^2(X/d_1)}\right)
\end{align*}
from Lemma (\ref{Halbrich}) with $a=d_1$.
So our term is
\begin{align*}
\le \sum_{d_1\le \sqrt{X}}\frac{(\Lambda-\Lambda_1)(d_1)}{d_1}\mathfrak{C'}\frac{X}{\log X}
\end{align*}
where $\mathfrak{C'}$ is a constant independent of $d_1$.

Furthermore, the series $\sum \frac{(\Lambda-\Lambda_1)(d_1)}{d_1}$ is convergent so the partial sums are bounded. We set
\begin{equation*}
\mathfrak{C''}= \sum_{n\ge 1}\frac{(\Lambda-\Lambda_1)(n)}{n}\mathfrak{C'}
\end{equation*}
and so we have
\begin{equation}
\sum_{X<n\le 2X}\Lambda_1(n+2)\sum_{\substack{d_1 d_2 =n \\ n^{\beta} \le d_1 \le n^{\gamma}}} \frac{\Lambda(d_2)(\Lambda-\Lambda_1)(d_1)}{\log n}\le \mathfrak{C''}\frac{X}{\log X}
\end{equation}
where $\mathfrak{C''}$ is independent of $X$.

At last, we have
\begin{align*}
\sum_{n\le X}\Lambda_1(n+2)\sum_{\substack{d_1 d_2 =n \\ n^{\beta} \le d_1 \le n^{\gamma}}} \frac{\Lambda(d_2)(\Lambda-\Lambda_1)(d_1)}{\log n} = \sum_{\frac{X}{2}<n\le X}... +  \sum_{\frac{X}{4}<n\le \frac{X}{2}}... + ...\\
\le \mathfrak{C''}\frac{X}{2\log (X/2)}+\mathfrak{C''}\frac{X}{4\log (X/4)}+....=  \mathfrak{C''}X\sum_{k< \frac{\log X}{\log 2}}\frac{1}{2^k\log \frac{X}{2^k}}.
\end{align*}
Now, for all $k\le \frac{\log X}{2\log 2}$, $2^k\log(X/2^k)\ge 2^{k-1}\log X$ so we divide this last sum in two parts:

\begin{align*}
\sum_{k\le \frac{\log X}{\log 2}}\frac{1}{2^k\log \frac{X}{2^k}}&= \sum_{k< \frac{\log X}{2\log 2}}\frac{1}{2^k\log \frac{X}{2^k}}+
 \sum_{\frac{\log X}{2\log 2}\le k< \frac{\log X}{\log 2}}\frac{1}{2^k\log \frac{X}{2^k}}\\
&\le \frac{1}{\log X}\sum_{1\le k\le \frac{\log X}{2\log 2}}\frac{1}{2^{k-1}}+\frac{1}{\log \sqrt{X}}\sum_{ \frac{\log X}{2\log 2}\le k< \frac{\log X}{\log 2}}\frac{1}{2^{k}}
&\le  \frac{5}{\log X}.
\end{align*}
And so
\begin{equation}
\sum_{X<n\le 2X}\Lambda_1(n+2)\sum_{\substack{d_1 d_2 =n \\ n^{\beta} \le d_1 \le n^{\gamma}}} \frac{\Lambda(d_2)(\Lambda-\Lambda_1)(d_1)}{\log n}\le \frac{10 \mathfrak{C}'' X}{\log X}.
\end{equation}
We will get the same way
\begin{equation}
\sum_{X<n\le 2X}\Lambda_1(n+2)\sum_{\substack{d_1 d_2 =n \\ n^{\beta} \le d_1 \le n^{\gamma}}} \frac{\Lambda_1(d_1)(\Lambda-\Lambda_1)(d_2)}{\log n}\le \frac{10 \mathfrak{C}'' X}{\log X}
\end{equation}
And finally
\begin{equation}
\vert T_1\vert \le \frac{20 \mathfrak{C}'' X}{\log X} = \mathcal{O}\left(\frac{X}{\log X}\right)
\end{equation}
Thus, we can state a theorem without prime powers:
\begin{thm}\label{sanspuis}
Let us define the function $\Lambda_1(n) = \log n$ if $n$ is a prime and $0$ otherwise. Under the conjecture \ref{conje}, 
given any $\varepsilon>0$, any $\beta\ge 0$ and $\gamma>\beta$, there exists a real number $X_0$ such that for all $X\ge X_0$,
\begin{align*}
\sum_{n\le X} \Lambda_1(n)\Lambda_1(n+2) +\frac{1}{\gamma-\beta} \sum_{n \le X}\Lambda_1(n+2)\sum_{\substack{ d_1 d_2 =n \\ n^{\beta} \le d_1 \le n^{\gamma}}} \frac{\Lambda_1(d_1)\Lambda_1(d_2)}{\log n} \\
= 2 \mathfrak{S}_2 X\left( 1+\mathcal{O}(\varepsilon)\right).
\end{align*}
\end{thm}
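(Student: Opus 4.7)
The plan is to deduce Theorem~\ref{sanspuis} from Theorem~\ref{resultat}, which is the same statement with the full von Mangoldt function $\Lambda$ in place of $\Lambda_1$. Since the main term in Theorem~\ref{resultat} has size $2\mathfrak{S}_2 X$, it suffices to show that replacing $\Lambda$ by $\Lambda_1$ throughout introduces only an $O(X/\log X)$ discrepancy; this will be absorbed into the $O(\varepsilon)$ error as soon as $X_0$ is taken large enough that $1/\log X_0 < \varepsilon$. Write $\Lambda = \Lambda_1 + (\Lambda-\Lambda_1)$, where $\Lambda-\Lambda_1$ is supported on prime powers $p^k$ with $k\ge 2$, and recall the Chebyshev estimate $\sum_{n\le X}(\Lambda-\Lambda_1)(n) = \psi(X)-\theta(X) = O(\sqrt{X})$.

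For the diagonal sum $\sum \Lambda_1(n)\Lambda_1(n+2)$, I would expand the difference with $\sum\Lambda(n)\Lambda(n+2)$ and bound each correction trivially by $\log(X+2)\sum_{n\le X}(\Lambda-\Lambda_1)(n) = O(\sqrt X\log X)$. For the triple sum, the difference $\Lambda_1(d_1)\Lambda_1(d_2)\Lambda_1(n+2) - \Lambda(d_1)\Lambda(d_2)\Lambda(n+2)$ splits into error terms in which at least one factor is $\Lambda-\Lambda_1$. The contribution in which $\Lambda-\Lambda_1$ sits on the $n+2$ slot is bounded trivially by pulling out $\max(\Lambda_1(d_1)\Lambda_1(d_2)/\log n) \ll \log X$ and controlling the number of divisor representations of $n$ by Erd\H{o}s's bound $O(\log n/\log\log n)$; this gives $O(\sqrt{X}\log^2 X)$. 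This handles all the ``easy'' contributions.

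The main obstacle is a term of the form
\[
E = \sum_{n\le X}\Lambda(n+2)\sum_{\substack{d_1 d_2=n\\ n^\beta\le d_1\le n^\gamma}}\frac{(\Lambda-\Lambda_1)(d_1)\Lambda(d_2)}{\log n},
\]
in which $d_1$ is a proper prime power $p^k$. I would fix such a $d_1$, perform a dyadic decomposition $n\in(X/2^{j+1},X/2^j]$, and for each fixed $d_1$ apply an upper-bound sieve (Halberstam--Richert, Lemma~\ref{Halbrich}) to count primes $d_2 \asymp X/(d_1 2^j)$ such that $d_1 d_2+2$ is also prime. The key feature is that the singular series for this shifted-prime problem is bounded uniformly in $d_1$ when $d_1$ is a prime power (since $\prod_{p\mid 2d_1,\,p>2}\frac{p-1}{p-2}\le 2$), giving a bound of order $X/(d_1\log^2(X/d_1))$ per dyadic range with an absolute implied constant. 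Summing the dyadic contributions produces the expected gain of $1/\log X$, and summing over prime powers $d_1$ converges absolutely because $\sum_{k\ge 2,p}(\log p)/p^k < \infty$. This yields $E = O(X/\log X)$. The companion substitution $\Lambda(d_2)\mapsto \Lambda_1(d_2)$ inside $E$ changes the bound by only $O(X^{3/4})$ (both factors are then supported on sparse prime powers, and Chebyshev finishes it), and the symmetric term in which $\Lambda-\Lambda_1$ appears on $d_2$ instead of $d_1$ is treated identically by swapping roles. Collecting the three contributions, the total discrepancy is $O(X/\log X)$, and Theorem~\ref{resultat} then yields Theorem~\ref{sanspuis}.
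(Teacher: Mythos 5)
Your proposal is correct and follows essentially the same route as the paper: trivial Chebyshev bounds for the diagonal term and for the contribution where $\Lambda-\Lambda_1$ sits on $n+2$ (via the Erd\H{o}s divisor-count bound), then the Halberstam--Richert sieve of Lemma~\ref{Halbrich} applied for each fixed prime-power $d_1$ over a dyadic decomposition in $n$, with the uniform singular-series bound and the convergence of $\sum (\Lambda-\Lambda_1)(d)/d$ giving the final $O(X/\log X)$ discrepancy. No substantive differences from the paper's argument.
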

And taking $\beta=0$ we get the following theorem:
\begin{thm}
Under the conjecture \ref{conje}, given any $\varepsilon>0$, and any $\gamma$ where $0<\gamma<1$, there exists a real number $X_0$ such that for all $X\ge X_0$,
\begin{align*}
\sum_{n\le X} \Lambda_1(n)\Lambda_1(n+2) +\frac{1}{\gamma} \sum_{n \le X}\Lambda_1(n+2)\sum_{\substack{ d_1 d_2 =n \\  d_1 \le n^{\gamma}}} \frac{\Lambda_1(d_1)\Lambda_1(d_2)}{\log n} \\
= 2 \mathfrak{S}_2 X\left( 1+\mathcal{O}(\varepsilon)\right).
\end{align*}
\end{thm}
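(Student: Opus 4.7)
The plan is to obtain this statement as an immediate specialization of the preceding Theorem \ref{sanspuis} by setting $\beta = 0$. Theorem \ref{sanspuis} is stated for any $\beta \ge 0$ and any $\gamma > \beta$, so the choice $\beta = 0$, $0 < \gamma < 1$ is in its range of applicability. Under this choice, the constraint $n^{\beta} \le d_1 \le n^{\gamma}$ becomes $1 \le d_1 \le n^{\gamma}$, which is equivalent to $d_1 \le n^{\gamma}$ since divisors are automatically $\ge 1$; and the prefactor $\frac{1}{\gamma - \beta}$ collapses to $\frac{1}{\gamma}$.

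Thus, no new analytic argument is needed: I would simply invoke Theorem \ref{sanspuis} with $\beta = 0$ and read off the identity. The constant $\mathfrak{S}_2$, the main term $2 \mathfrak{S}_2 X$, the error $\mathcal{O}(\varepsilon)$, and the existence of $X_0$ (depending on $\varepsilon$ and $\gamma$, through the choices of $\nu$ and $\delta$ made in \ref{thmfinal}) all transfer verbatim.

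Where the real work sits, and which I would merely cite, is the chain of earlier reductions: Theorem \ref{sanspuis} itself is deduced from Theorem \ref{resultat} (the analogous statement for $\Lambda$ in place of $\Lambda_1$) by estimating the contribution of prime powers via the Chebyshev difference $\psi(X) - \theta(X) = \mathcal{O}(\sqrt{X})$ together with the linear sieve bound of Lemma \ref{Halbrich}, which controls $\#\{p \le X : ap + 2 \text{ prime}\}$ uniformly in the prime power $a$. In turn, Theorem \ref{resultat} comes from Bernstein-approximating the indicator $\mathbbm{1}_{[\beta,\gamma]}$ by convex combinations of the polynomials $f_{\nu_1,\nu_2}$ and applying Theorem \ref{thm3bis}, which itself rests on the generalized Bombieri sieve (Theorem \ref{thm2}) applied to $f(n) = \Lambda(n+2)$ under the Elliott–Halberstam hypothesis $(H_{10})$.

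In short, this final statement involves no additional obstacle: the only thing to verify is that nothing in Theorem \ref{sanspuis} degenerates at the boundary $\beta = 0$, and this is transparent from its statement. The potentially delicate point — that the implicit constants remain uniform as $\beta \to 0$ — is already taken care of in \ref{thmfinal}, where the error term is controlled purely by $\varepsilon$, the Bernstein parameter $\nu$, and $\delta$, independently of the position of the interval $[\beta,\gamma]$ inside $[0,1)$.
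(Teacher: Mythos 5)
Your proposal is correct and matches the paper exactly: the paper obtains this theorem by specializing Theorem \ref{sanspuis} to $\beta=0$, under which the constraint $n^{\beta}\le d_1\le n^{\gamma}$ reduces to $d_1\le n^{\gamma}$ and the prefactor $\frac{1}{\gamma-\beta}$ becomes $\frac{1}{\gamma}$. Your accounting of the chain of prior reductions (Theorem \ref{sanspuis} from Theorem \ref{resultat} via the prime-power estimates and Lemma \ref{Halbrich}, and so on back to Theorem \ref{thm2}) is also accurate.
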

We recall that for both those theorems, $X_0$ is chosen such that $\frac{1}{\log X}<\varepsilon$.





\bibliographystyle{plain}
\bibliography{biblio}

\end{document}